\newcommand{\R}{\mathbb{R}}
\newcommand{\N}{\mathbb{N}}
\newcommand{\Com}{\mathbb{C}}
\newcommand{\la}{\lambda}
\newcommand{\al}{\alpha}
\newcommand{\bt}{\beta}
\newcommand{\ga}{\gamma}
\newcommand{\spawn}{\operatorname{span}}
\newcommand{\sech}{\operatorname{sech}}
\newcommand{\re}{\operatorname{Re}}
\newcommand{\ima}{\operatorname{Im}}
\newtheorem{thm}{Theorem}[section]
\newtheorem{lem}[thm]{Lemma}
\newtheorem{prop}[thm]{Proposition}
\newtheorem{defn}[thm]{Definition}
\theoremstyle{remark}
\newtheorem{rem}{Remark}[section]
\newcommand{\be}{\begin{equation}}
\newcommand{\ee}{\end{equation}}
\newcommand{\bp}{\begin{proof}}
\newcommand{\ep}{\end{proof}}
\newcommand{\bel}{\begin{equation}\label}
\newcommand{\eeq}{\end{equation}}
\newcommand{\bea}{\begin{eqnarray}}
\newcommand{\eea}{\end{eqnarray}}
\newcommand{\bee}{\begin{eqnarray*}}
\newcommand{\eee}{\end{eqnarray*}}
\newcommand{\ben}{\begin{enumerate}}
\newcommand{\een}{\end{enumerate}}
\newcommand{\nonu}{\nonumber}
\date{\today}
\title[NLS-like breathers]{Stability and instability of breathers in the $U(1)$ Sasa-Satsuma and Nonlinear Schr\"odinger models}
\author{Miguel A. Alejo}
\thanks{(M.A.A.) Departamento de Matem\'aticas, Universidad de C\'ordoba, Spain,  e-mail: {\tt malejo@uco.es}. 
Partially funded by Product. CNPq grant no. 305205/2016-1, IMUS and VI PPIT-US program ref. I3C}
\author{Luca Fanelli}
\thanks{(L.F.) Departamento de Matem\'aticas, Ikerbasque \& Universidad del Pa\'is Vasco / Euskal Herriko Unibertsitatea, 
e-mail: {\tt luca.fanelli@ehu.es}} 
\author{Claudio Mu\~noz}
\thanks{(C.M.) CNRS and Departamento de Ingenier\'ia Matem\'atica DIM,  Universidad de Chile,
e-mail: {\tt cmunoz@dim.uchile.cl}. Partially funded by CMM Conicyt PIA AFB170001 and Fondecyt 1150202} 
\begin{document}

\begin{abstract}
We consider the Sasa-Satsuma (SS) and Nonlinear Schr\"odinger (NLS) equations posed along the line, in 1+1 dimensions. Both equations are canonical integrable $U(1)$ models, with solitons, multi-solitons and breather solutions \cite{Yang}. For these two equations, we recognize four distinct localized breather modes: the Sasa-Satsuma for SS, and for NLS the Satsuma-Yajima, Kuznetsov-Ma and Peregrine breathers. Very little is known about the stability of these solutions, mainly because of their complex structure, which does not fit into the classical soliton behavior \cite{GSS}. In this paper we find the natural $H^2$ variational characterization for each of them.  This seems to be the first known variational characterization for these solutions; in particular, the first one obtained for the famous Peregrine breather. We also prove that Sasa-Satsuma breathers are $H^2$ nonlinearly stable, improving the linear stability property previously proved by Pelinovsky and Yang \cite{Peli_Yang}. Moreover, in the SS case, we provide an alternative understanding of the SS solution \emph{as a breather}, and not only as an embedded soliton. The method of proof is based in the use of a $H^2$ based Lyapunov functional, in the spirit of  \cite{AM}, extended this time to the vector-valued case. We also provide another rigorous justification of the instability of the remaining three nonlinear modes 
(Satsuma-Yajima, Peregrine and Kuznetsov-Ma), based in the study of their corresponding linear variational structure (as critical points of a suitable Lyapunov functional), and complementing the instability results recently proved e.g. in \cite{Munoz2}. 
\end{abstract}

\maketitle
\numberwithin{equation}{section}

\bigskip

\section{Introduction}

\subsection{Setting} In this paper our main purpose is to deal with the variational stability of complex soliton-like solutions for Schr\"odinger-type,  $U(1)$ invariant models appearing in nonlinear Physics and integrability theory. By $U(1)$ symmetry, we refer to the classical invariance of the equation under the transformation $u\mapsto u e^{i\ga}$, with $\ga\in\R$ and $u$ complex-valued solution. 

\medskip

The first model that we shall consider is the cubic focusing {\bf Nonlinear Schr\"odinger} (NLS) equation posed on the real line
\be\label{NLS0}
iu_t +u_{xx} +|u|^2u=0, \quad u(t,x)\in\Com, \quad (t,x)\in \R^2. 
\ee
For this model, we will assume two boundary value conditions (BC) at infinity: 
\medskip
\ben
\item[(1.1a)]\label{1a} {\bf Zero BC:} $|u(t,x)|\to 0$ as $x\to \pm\infty$, and
\smallskip
\item[(1.1b)] {\bf Nonzero BC}, in the form of an \emph{Stoke wave}: for all $t\in\R$, 
\be\label{Stokes}
|u(t,x)-e^{it}| \to 0 \quad \hbox{as} \quad x\to \pm\infty.
\ee
\een
\medskip
Additionally, we will consider the {\bf Sasa-Satsuma} (SS) equation for a function $q=q(T,X)$ posed on the line \cite{SS}
\be\label{SS0}
\begin{cases}
iq_T +\frac12 q_{XX} +|q|^2q +i \epsilon \Big( q_{XXX} +6|q|^2 q_X +3q (|q|^2)_X \Big) =0,\\
 q=q(T,X)\in \Com,\quad T,X\in \R.
\end{cases}
\ee
Note that in this equation (and after a suitable rescaling) $\epsilon$ is the parameter of bifurcation from (the integrable) cubic NLS \eqref{NLS0}. However, it is important to notice that, unless $\epsilon=0$, \eqref{SS0} represents a third order complex-valued model for the unknown $q$, with important differences with respect to \eqref{NLS0}.

\medskip

Following Sasa and Satsuma \cite{SS}, we have that under the change of variables
\[
\begin{aligned}
u(t,x)=&~{} q(T,X)e^{-i (X-T/(18\epsilon))/(6\epsilon)}, \\
 t=&~{} T, \quad x=X-T/(12\epsilon),
 \end{aligned}
\]
and assuming $\epsilon=1$, equation \eqref{SS0} reads now \cite[p. 114]{Yang}
\begin{equation}\label{SS}
\begin{aligned}
&~{} u_t  +  u_{xxx} +6|u|^2 u_x +3u(|u|^2)_x =0, \\
&~{} u= u(t,x)\in \Com, \quad t,x\in \R.
 \end{aligned}
\end{equation}
In this paper we will focus on this third order, complex-valued, {\bf modified KdV} (mKdV) model. In particular, this equation will retain several properties of the standard, scalar valued mKdV equation.

\medskip

Both equations, \eqref{NLS0} and \eqref{SS0}, are well-known integrable models, see \cite{ZS} and \cite{SS} respectively. NLS describes 
the propagation of pulses in nonlinear media and gravity waves in the ocean \cite{Dauxois}, and was proved integrable by Zakharov and Shabat \cite{ZS}. NLS \eqref{NLS0} with nonzero BC (1.2) is believed to describe the emergence of rogue or freak waves in deep sea \cite{Peregrine}, and also it is a well-known example of the mechanism known as modulational instability \cite{Peregrine,Akhmediev2}. On the other hand, SS was introduced by Sasa and Satsuma \cite{SS} as an integrable model for which the Lax pair is $3\times 3$ matrix valued, and it is closely related to another integrable model, the Hirota equation (see e.g. \cite{Yang} for additional details).

\medskip

Finally, in the case of \eqref{NLS0} with nonzero boundary conditions at infinity, note that the Stokes wave $e^{it}$ is a particular, non localized solution of \eqref{NLS0}.  A complete family of standing waves can be obtained by using the scaling, phase and Galilean invariances of \eqref{NLS0}: 
\be\label{standing_wave}
u_{c,v,\ga}(t,x):= \sqrt{c} \, \exp \Big(  ict + \frac i2 xv  -\frac i4 v^2 t + i\ga \Big).
\ee
This wave is another solution to \eqref{NLS0}, for any scaling $c>0$, velocity $v\in \R$, and phase $\ga \in \R$. However, since all these symmetries represent invariances of the equation, they will not be essential in our proofs, and we will assume in this paper $c=1$, $v=\ga=0$.

\medskip

Consequently, we will seek for solutions in the form of a Stoke wave, which means that we set
\be\label{u_w}
u(t,x) = e^{it}(1+ w(t,x)).
\ee
We will deal with solutions to \eqref{NLS} for which the \emph{modulational instability} phenomenon is present. Indeed, note that $w$ now solves \cite{Munoz2}
\be\label{NLS}
iw_t + w_{xx}   + 2 \re w  + w^2 + 2 |w|^2 + |w|^2 w=0, 
\ee
with initial data in a certain Sobolev space. The associated linearized equation for \eqref{NLS} is just\footnote{This equation is similar to the well-known linear Sch\"odinger $i\partial_t w + \partial_x^2 w=0$, but instead of dealing with the additional term $2 \re w$ only as a perturbative term, we will consider all linear terms as a whole for later purposes (not considered in this paper), in particular, \emph{long time existence and decay issues}, see e.g. \cite{GNT1,GNT2}.}
\be\label{LS}
i\partial_t w + \partial_x^2 w  + 2 \re w =0.
\ee
Written only in terms of $\phi =\re w$, we have the wave-like equation (compare with \cite{Goodman} in the periodic setting)
\be\label{phi_eqn}
\partial_t^2 \phi+\partial_x^4 \phi + 2 \partial_x^2 \phi =0.
\ee
This problem has some instability issues, as a standard frequency analysis shows: 
looking for a formal standing wave $\phi=e^{i(kx-\omega t)}$ solution to \eqref{phi_eqn}, one has
\[
\omega(k) = \pm |k| \sqrt{k^2 -2},
\]
which reveals that for small wave numbers ($|k|<\sqrt{2}$) the linear equation behaves in an ``elliptic'' fashion, and exponentially (in time) growing modes are present from small perturbations of the vacuum solution. A completely similar conclusion is obtained working in the Fourier variable. This singular behavior is not present if now the equation is defocusing, that is \eqref{NLS} with nonlinearity $-|u|^{2} u$.\footnote{Another model corresponds to the Gross-Pitaevskii equation: $i\partial_t u + \partial_x^2 u +u(1-|u|^2)=0$, for which the Stokes wave is modulationally stable.}

\medskip

Summarizing, in this paper we will focus on models \eqref{NLS0} and \eqref{SS} with zero boundary values at infinity, and on the model \eqref{NLS}, which represents \eqref{NLS0} with nonzero boundary conditions, in the form of a Stoke wave \eqref{Stokes}. Additionally, and appealing to physical considerations, we will only consider solutions to these models with \emph{finite energy}, in a sense to be described below.

\medskip

Concerning the well-posedness theory for the three models \eqref{NLS0}-(1.1a), \eqref{SS}, and \eqref{NLS}, we have the following result.

\begin{prop}[Local and global well-posedness for \eqref{NLS0}-(1.1a), \eqref{SS}, and \eqref{NLS}]
The Sasa-Satsuma equation \eqref{SS} is locally well-posed in $H^s$, $s>\frac14$, and globally well-posed if $s\geq 1$. Similarly, NLS with zero background \eqref{NLS0} is globally well-posed for $s\geq 0$, while NLS with nonzero background \eqref{NLS} is locally well-posed in $H^s$, $s>\frac12$. 
\end{prop}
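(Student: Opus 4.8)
This proposition bundles three logically independent well-posedness statements, and I would prove each of them separately. In every case the scheme is the same Duhamel fixed point $u(t)=S(t)u_0+\int_0^t S(t-\tau)N(u(\tau))\,d\tau$, with $S$ the relevant linear propagator and $N$ the nonlinearity, closed by contraction in a suitable resolution space and then continued to all times by conservation laws whenever the equation provides them.

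The cubic NLS \eqref{NLS0} with zero background is entirely classical. The free propagator $e^{it\pd_x^2}$ is unitary on each $H^s$ and satisfies the full set of Strichartz estimates on the line; since the cubic power is $L^2$-subcritical in one dimension, a contraction in an admissible mixed space $L_t^qL_x^r$ produces local well-posedness at $s=0$, and the conserved mass $\int_\R\abs{u}^2\,dx$ upgrades it to global well-posedness for all $s\ge 0$ (Tsutsumi). For $s\ge 1$ one may close instead at the energy level, the focusing Hamiltonian together with mass conservation and the Gagliardo--Nirenberg inequality ruling out finite-time blow-up in the subcritical regime. No idea beyond the standard theory is needed here.

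The Sasa--Satsuma equation \eqref{SS} is the genuinely hard case, and is where I expect the main technical work. It is a complex, vector-valued companion of the focusing mKdV equation, sharing the Airy scaling of $u_t+u_{xxx}=0$ and carrying a nonlinearity $6\abs{u}^2u_x+3u(\abs{u}^2)_x$ with one spatial derivative. That derivative cannot be recovered by Sobolev embedding, so I would follow Kenig--Ponce--Vega: run the contraction in a space built from the sharp local smoothing estimate (which gains exactly one derivative on the Airy flow), the maximal-function estimate, and the Airy Strichartz estimate, the balance among these being precisely what pins the threshold to $s>\tfrac14$. The main obstacle is bookkeeping the derivative produced by the nonlinearity against the smoothing gain in the complex, coupled setting; fortunately the real and imaginary parts interact only through the linear Airy group, so the scalar multilinear estimates transfer with essentially cosmetic modifications. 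Global well-posedness for $s\ge 1$ then follows from the conserved mass $\int_\R\abs{u}^2\,dx$ and the $H^1$-level Hamiltonian of the integrable hierarchy, which bound $\norm{u_x}_{L^2}$ and forbid blow-up.

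Finally, for NLS with nonzero background \eqref{NLS} the nonlinearity $w^2+2\abs{w}^2+\abs{w}^2w$ carries no derivatives, so the only subtlety lies in the linear part $iw_t+w_{xx}+2\re w$. Writing $w=\phi+i\psi$ turns \eqref{LS} into the first-order system $\phi_t=-\psi_{xx}$, $\psi_t=\phi_{xx}+2\phi$, whose dispersion relation $\omega(k)=\pm\abs{k}\sqrt{k^2-2}$ becomes imaginary on the band $\abs{k}<\sqrt2$; there $S(t)$ is not unitary and its Fourier modes grow like $e^{\abs{k}\sqrt{2-k^2}\,\abs{t}}$. Since the growth rate $\abs{k}\sqrt{2-k^2}$ stays $\le 1$ on that band, while the symbol only degenerates to a nilpotent Jordan block at $\abs{k}=\sqrt2$ (yielding merely linear-in-$t$ growth there), the full symbol of $S(t)$ is uniformly bounded in $k$ on every compact time interval, whence $\norm{S(t)}_{H^s\to H^s}\le C e^{C\abs{t}}$. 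Because $H^s(\R)$ is a Banach algebra for $s>\tfrac12$, the maps $w\mapsto w^2$, $w\mapsto\abs{w}^2$ and $w\mapsto\abs{w}^2w$ are locally Lipschitz on $H^s$, and a contraction in $C([-T,T];H^s)$ with $T$ small finishes the proof. The one point to track is that, $S(t)$ being exponentially bounded rather than unitary, the existence time degrades through a factor $e^{CT}$; this is harmless for the local theory but is exactly the analytic signature of the modulational instability noted after \eqref{phi_eqn}, and explains why no global statement is claimed for \eqref{NLS}.
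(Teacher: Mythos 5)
Your proposal is correct and follows essentially the same route as the paper, which gives no self-contained proof but defers exactly to the arguments you reconstruct: the Kenig--Ponce--Vega local smoothing/maximal function/Strichartz contraction for the Sasa--Satsuma (mKdV-type) equation with the $s>\frac14$ threshold and conservation of mass and energy (via Gagliardo--Nirenberg) for globality at $s\geq 1$, the classical Strichartz-plus-mass-conservation theory of Ginibre--Velo, Tsutsumi and Cazenave--Weissler for \eqref{NLS0}, and, for \eqref{NLS}, the argument of \cite{Munoz2} based on the exponentially bounded (non-unitary) linear group --- whose symbol you correctly bound on the unstable band $|k|<\sqrt 2$, including the nilpotent degeneracy at $|k|=\sqrt 2$ --- combined with the Banach algebra property of $H^s$ for $s>\frac12$. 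One harmless slip: the real and imaginary parts of $u$ in \eqref{SS} are in fact coupled by the nonlinearity, not only by the Airy group, but this does not affect your argument since the KPV multilinear estimates pass through absolute values and are insensitive to complex conjugation, as you use.
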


The proof of this result in the case of Sasa-Satsuma \eqref{SS} follows easily from the arguments in Kenig-Ponce-Vega \cite{KPV}, and for \eqref{NLS} it was recently proved in \cite{Munoz2}. The proof of \eqref{NLS0} is standard, and is due to Ginibre and Velo \cite{GV}, Tsutsumi \cite{Tsutsumi} and Cazenave and Weissler \cite{CW}. See Cazenave \cite{Cazenave} for a complete account on the different NLS equations.

\subsection{$U(1)$ invariant Breathers}  In this paper we are interested in variational stability properties associated to particular but not less important exact solutions to \eqref{NLS0}-(1.1a),  \eqref{SS} and \eqref{NLS}, usually referred as \emph{breathers}.

\begin{defn} 
We will say that a particular smooth solution to \eqref{NLS0}-\emph{(1.1a)} or \eqref{NLS0}-\emph{(1.1b)}, 
or \eqref{SS}, is a {\bf breather} if modulo the invariances of the equation, it is periodic in time, but with nontrivial period. 
\end{defn}

This definition leaves outside of our paper standard solitons for \eqref{NLS0}:
\be\label{soliton}
\frac{\sqrt{c}e^{i \left( ct + \frac12 xv -\frac14  v^2 t +\gamma_0 \right)}}{\cosh(\sqrt{c}(x-vt-x_0))}, \quad c>0,~ v,x_0,\ga_0 \in\R,
\ee
which are time periodic solutions of \eqref{NLS0}, thanks to scaling and Galilean transformations, but its time period is trivial (its infimum equals zero). This last soliton is a well-known orbitally stable solution of NLS, see Cazenave-Lions \cite{CL}, Weinstein \cite{Weinstein}, and Grillakis-Shatah-Strauss \cite{GSS}.

\medskip

\medskip

\begin{itemize}
\item[(i)] {\bf The Sasa-Satsuma (SS) breather}. Let $\al,\bt>0$ be arbitrary but fixed parameters. Following \cite[eqns. (38)-(39)]{SS}, and \cite[eqns. (3-250)-(3-252)]{Yang}, an exact breather solution of  Sasa-Satsuma \eqref{SS} is given by the expression
\be\label{R}
B_{SS}(t,x):= Q_\beta(x+ \gamma t + x_2)e^{ i \Theta}, 
\ee
%
where the phase $\Theta$ and the scaled $Q_\beta$ obey
\[
\Theta:=\alpha (x + \delta t +x_1), \quad Q_\bt (x):=\beta Q(\beta x),
\] 
and the speeds $\gamma$ and $\delta$ are given by (compare with \cite{AM} for instance)
\be\label{gamma_delta}
\gamma:= 3\alpha^2-\beta^2, \quad \delta:= \alpha^2-3\beta^2.
\ee
Above, $Q$ is \emph{complex-valued}, exponentially decaying:
\begin{equation}\label{SolQ}
Q(x):=Q_{\eta}(x):= \frac{2(e^{x} +\eta e^{-x})}{e^{2x} +2 +  |\eta|^2 e^{-2x}},
\ee
and
\begin{equation}\label{speeds}
\eta:= \frac{\alpha}{\alpha +i\beta}.
\ee
It is well-known that the real-valued function $|Q|$ is {\bf single humped} when $|\eta|>1/2$ (i.e. $|\alpha|>\frac12\sqrt{\alpha^2+\beta^2}$), and {\bf double humped} when $0<|\eta|\leq 1/2$ (or $|\alpha|\leq \frac12\sqrt{\alpha^2+\beta^2}$), see \cite{Yang,Peli_Yang}. This mixed shape is in strong contrast with the standard NLS soliton \eqref{NLS0} given in \eqref{soliton}, which is only single humped. Moreover, from the formula in \eqref{R}-\eqref{SolQ}-\eqref{speeds}, one can clearly see that an increasingly small NLS soliton \eqref{soliton} is recovered in the limit $\eta \to 1$ (or $\beta\to 0$). See Fig. \ref{fig7} for more details. 

\medskip

Another important observation in the $SS$ breather is the fact that the single humped condition $|\alpha|>\frac12\sqrt{\alpha^2+\beta^2}$ leads to $3\al^2>\beta^2$, which is nothing but having $\ga>0$ (i.e., a SS breather of {\bf negative speed}). Similarly, the double-humped condition $|\alpha|\leq \frac12\sqrt{\alpha^2+\beta^2}$ means that $\ga\leq 0$, that is to say, the SS breather {\bf moves to the right}.

\medskip
The $B_{SS}$ solution is usually referred in the literature (see e.g. \cite{Yang,Peli_Yang} and references therein) as an \emph{embedded soliton}, because it is embedded in the continuous spectrum of the associated linear operator (see Remark \ref{embedded} for more details on this concept). From the techniques exposed in this paper, we will see that $B_{SS}$ fits perfectly the description associated to a {\bf breather solution}, including its stability characterization.

\medskip
The stability of the SS breather has been studied by Pelinovsky and Yang in \cite{Peli_Yang}. It was proved in this work that in the $\eta\to 1$ limit, the SS breather is linearly stable (single humped case). No other regime seems to be rigorously described in the literature, as far as we understand. Also, the nonlinear stability/instability of the SS breather seems a completely open question.

\medskip

\item[(ii)] {\bf The Satsuma-Yajima (SY) breather}. Let $c_1,c_2>0$, and $\ga_\pm:=c_2 \pm c_1$. The NLS equation with zero background \eqref{NLS0} has the standing, exponentially decaying breather \cite{SY}
%
%
\be\label{SY}
B_{SY}(t,x) := \frac{ 2\sqrt{2} \ga_+\ga_- e^{ic_1^2t}(c_1\cosh(c_2x) + c_2e^{i \ga_+\ga_- t} \cosh(c_1x))}
{\ga_-^2\cosh( \ga_+ x) + \ga_+^2\cosh(\ga_- x) + 4c_1c_2\cos( \ga_+\ga_- t)}, 
\ee
as solution which is a perturbation of the zero state, see Fig . \ref{SY_fig}. By invariances of the equation under time-space shifts, it is possible to give a more general form for \eqref{SY} involving shifts $x_1,x_2\in \R$ in the $t$ and $x$ variables, respectively. Note that by choosing $c_1=1$ and $c_2=3$, we recover the {\it original breather discovered by Satsuma-Yajima} \cite{SY}:
\be\label{SY0}
 \frac{4 \sqrt{2} e^{it}(\cosh(3x) +3e^{8it} \cosh x)}{\cosh(4x) + 4\cosh(2x) + 3\cos(8t)}. 
\ee
The SY breather has been observed in nonlinear optics as well as in quantum mechanics, 
and plays a key role in the description of the precise dynamics 
of optical and matter waves in nonlinear and non autonomous dispersive physical systems, driven by 
nonautonomous NLS and Gross-Pitaevskii (GP) models. For instance, two matter wave soliton solutions in a Bose-Einstein condensate 
reduce to the SY breather with a suitable constant selection (see \cite{Serkin} for further details). 
Moreover, in a hydrodynamical context, it has been reported the observation of the SY breather from a precise initial 
condition for exciting the two soliton solution, which gives rise to this SY breather, from the mechanical instruments generating the waves (\cite{CHOGDA}).

\medskip

It is also well-known that SY breathers are unstable \cite{Yang}. Their instability is simply based in the fact that there are explicit 2-solitons solutions 
(see \eqref{SYa1a2} in Appendix \ref{2solitonNLS} for example) arbitrarily close to the SY breather, but with completely different long-time behavior at infinity in time. This instability property is motivated, in terms of inverse scattering data, as the understanding of the 2-soliton and SY breather as objects described by 2-parameter ``complex-valued eigenvalues'', with no restriction at all, see \cite{Yang} for more details. On the contrary, the 2-soliton and mKdV breather are defined by using real-valued and complex-valued eigenvalues respectively, a distinction that avoids arbitrary closeness in any standard metric. 

\begin{figure}[h!] 
    \includegraphics[scale=0.25]{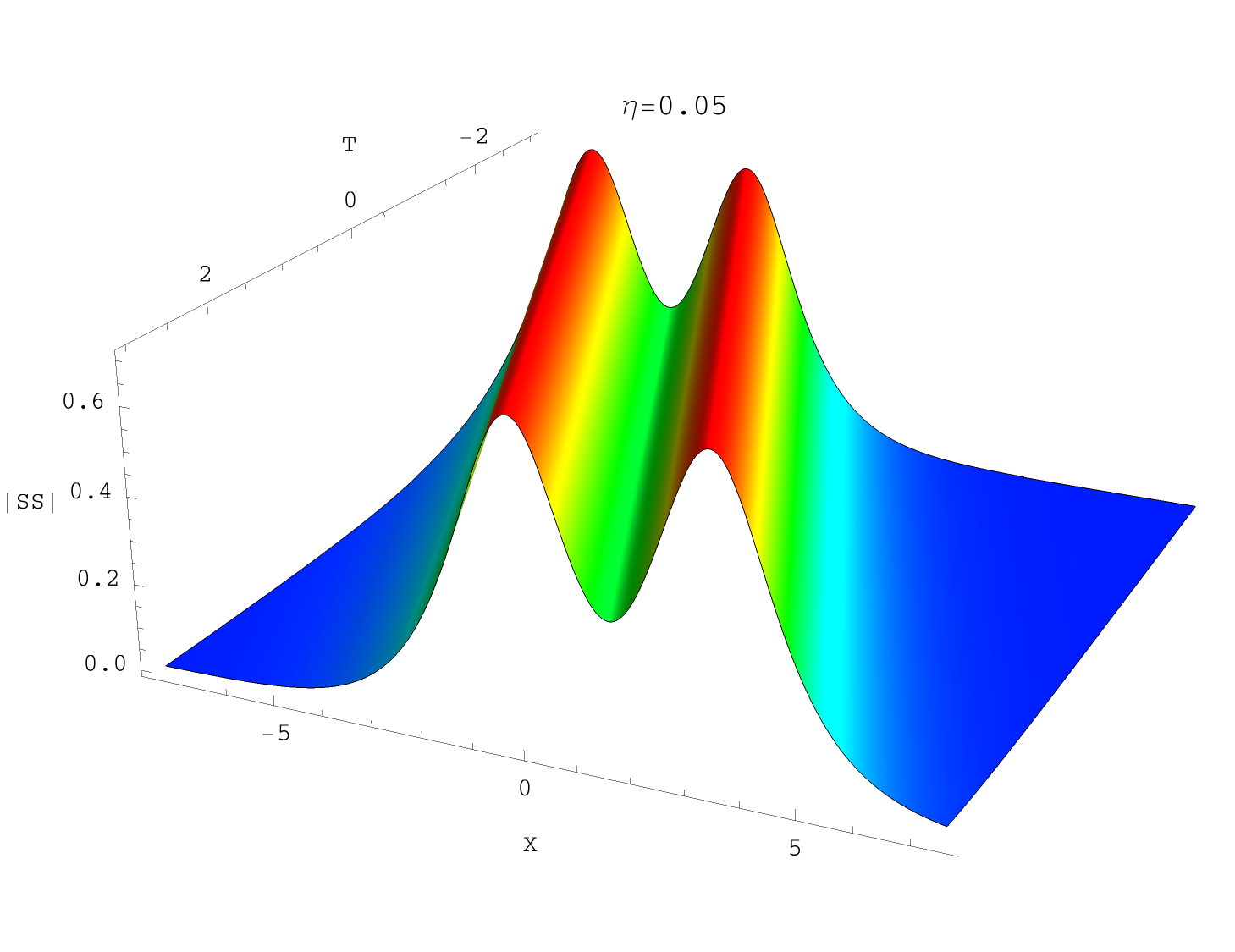}
    \includegraphics[scale=0.25]{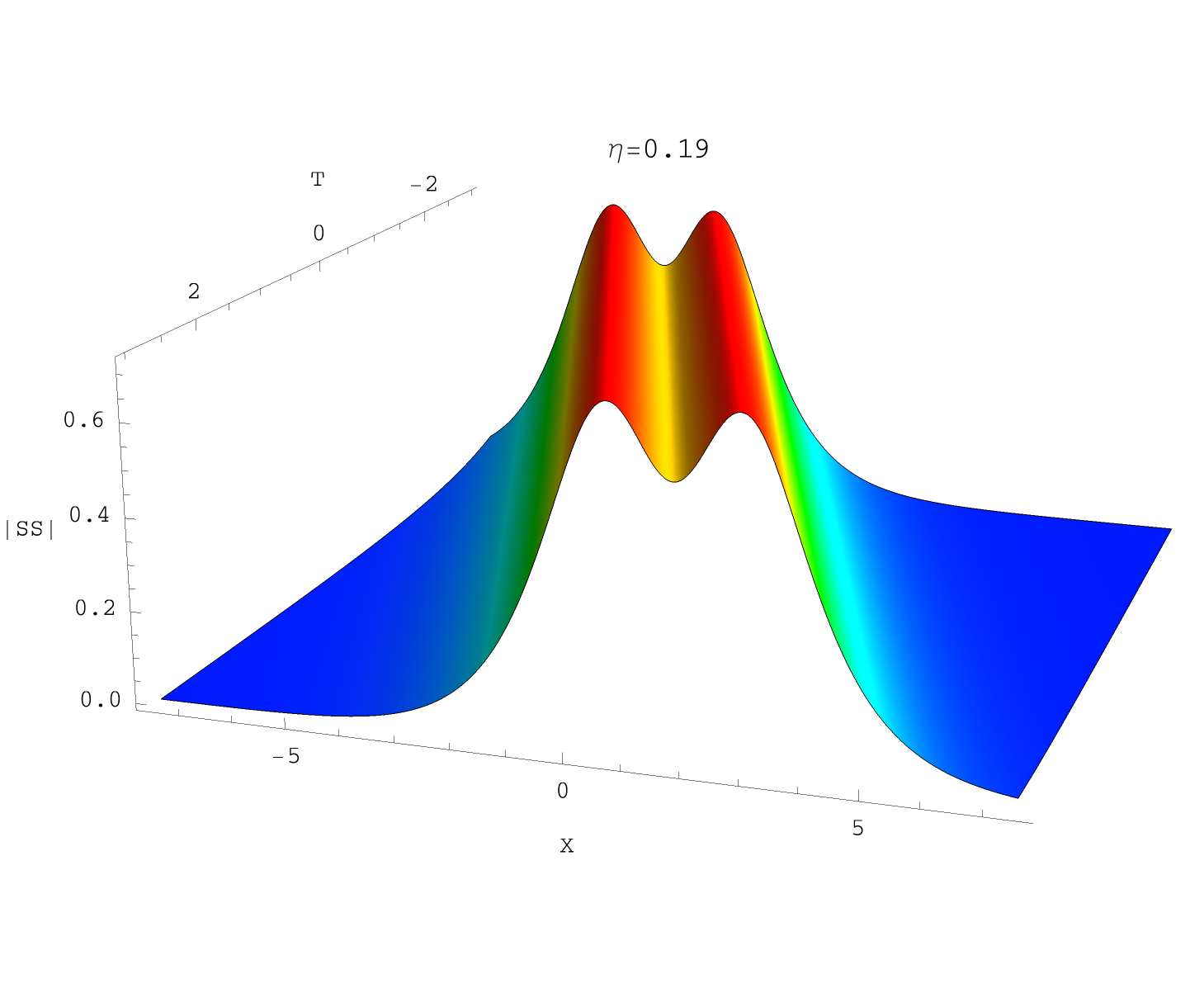}
    
    \includegraphics[scale=0.25]{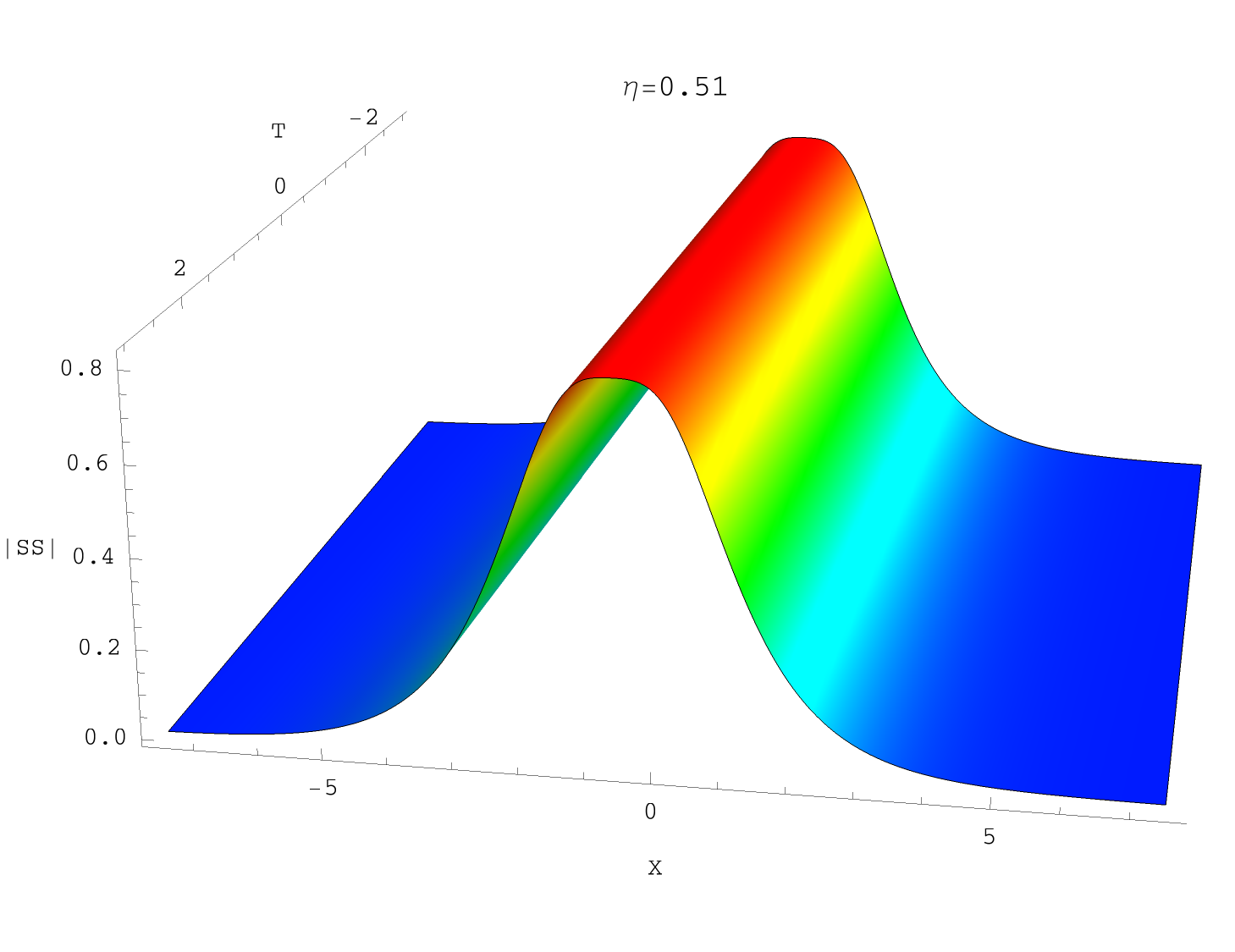}
    \includegraphics[scale=0.25]{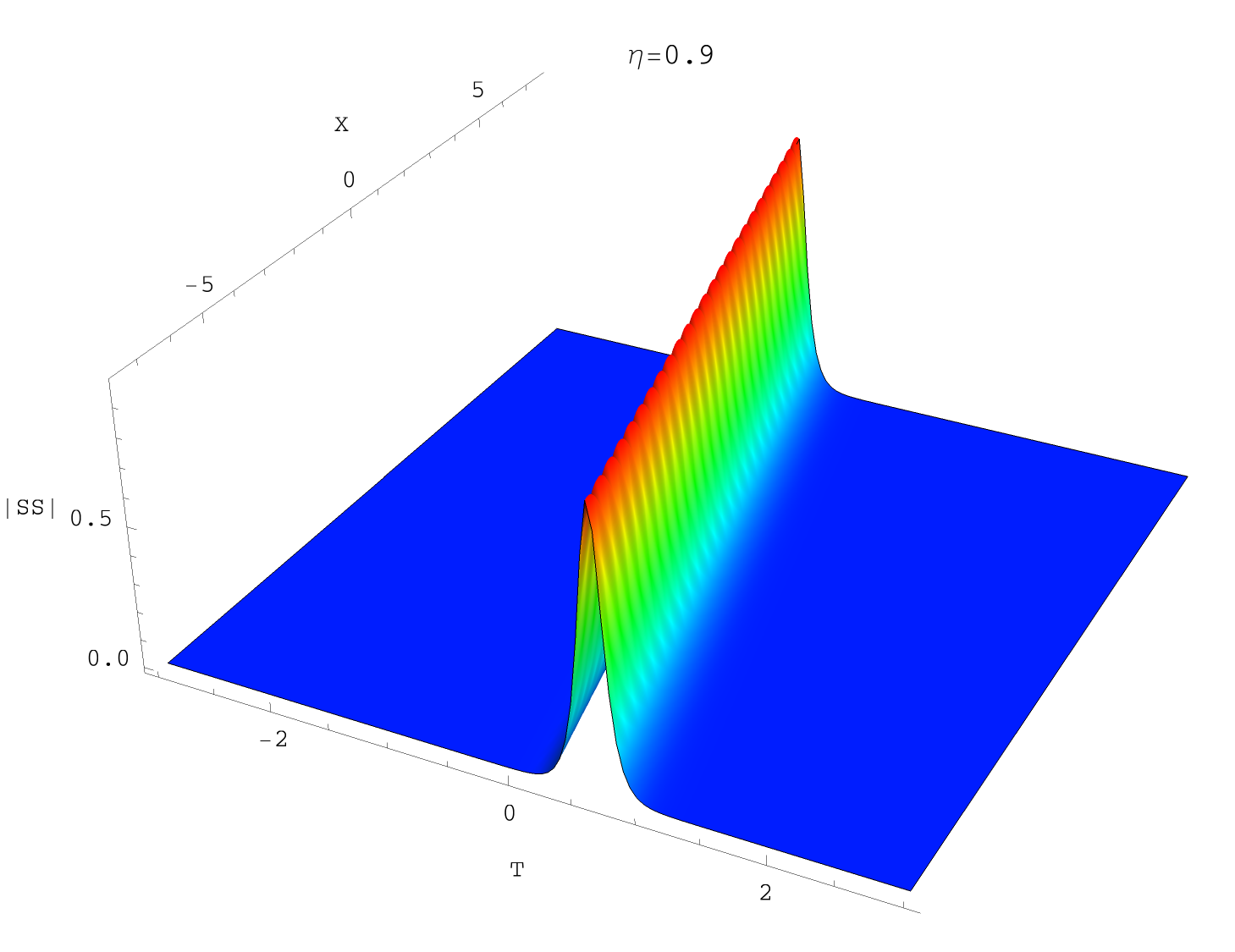}
  \caption{Absolute value of the SS  breather \eqref{SS}, for different values of the parameter $\eta$. \emph{Left above:} $|SS|$ with $\eta=0.05$; \emph{right above}: $|SS|$ with $\eta=0.19$; note that these are cases where the double hump is clearly devised. \emph{Left below:} $|SS|$ with $\eta=0.51$, and \emph{right below:} $|SS|$ with $\eta=0.9$. Note that for $\eta$ close to 1, one recovers the NLS soliton, and for $\eta$ close to zero, the breather decouples and two clearly defined humps, at equal distance for all time (of order $O(|\log \eta|)$), emerge in the dynamics.}
  \label{fig7} 
\end{figure}

\medskip

\begin{figure}[h!] 
              \includegraphics[width=7.0cm,height=4cm]{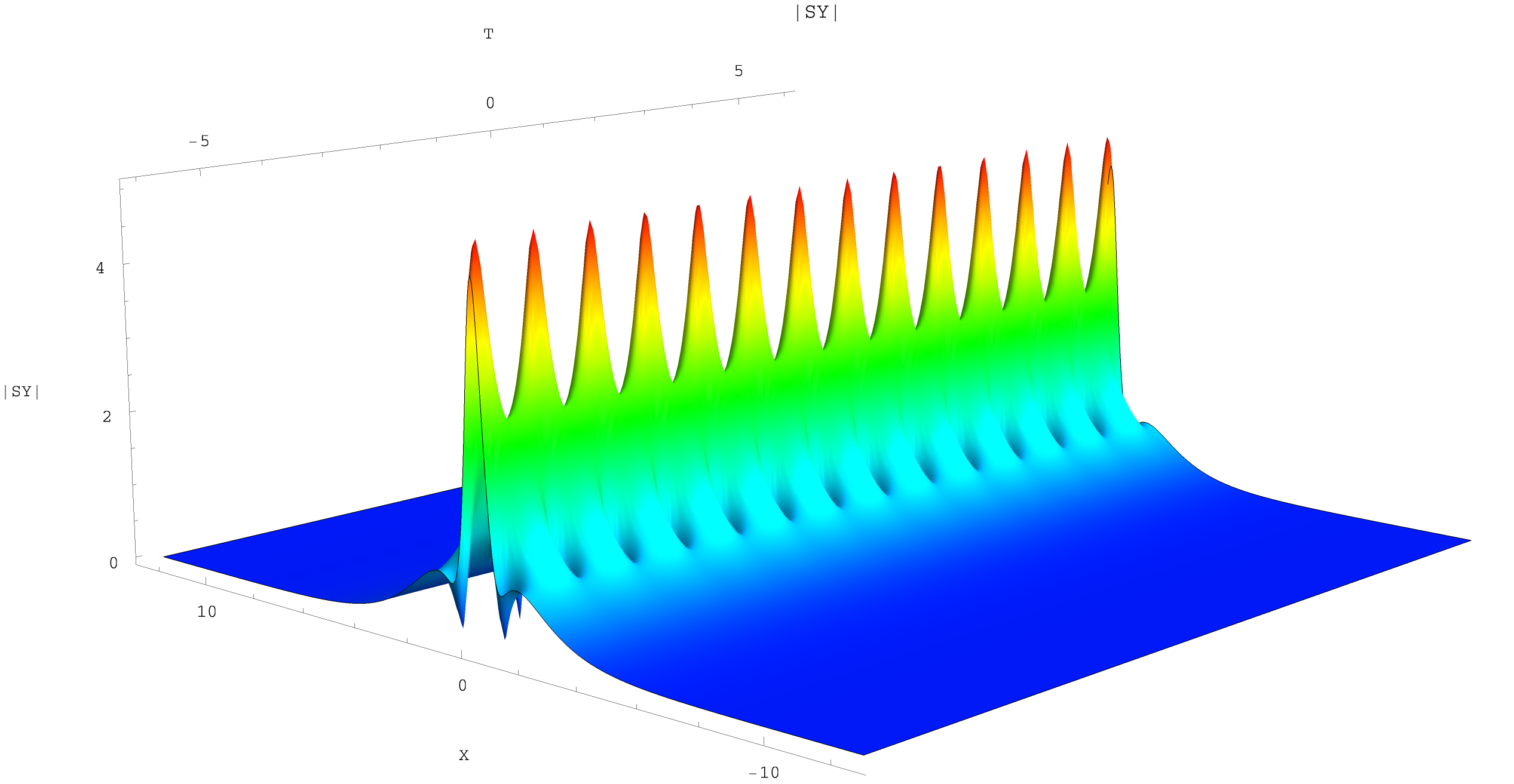}
              \includegraphics[width=7.0cm,height=4.5cm]{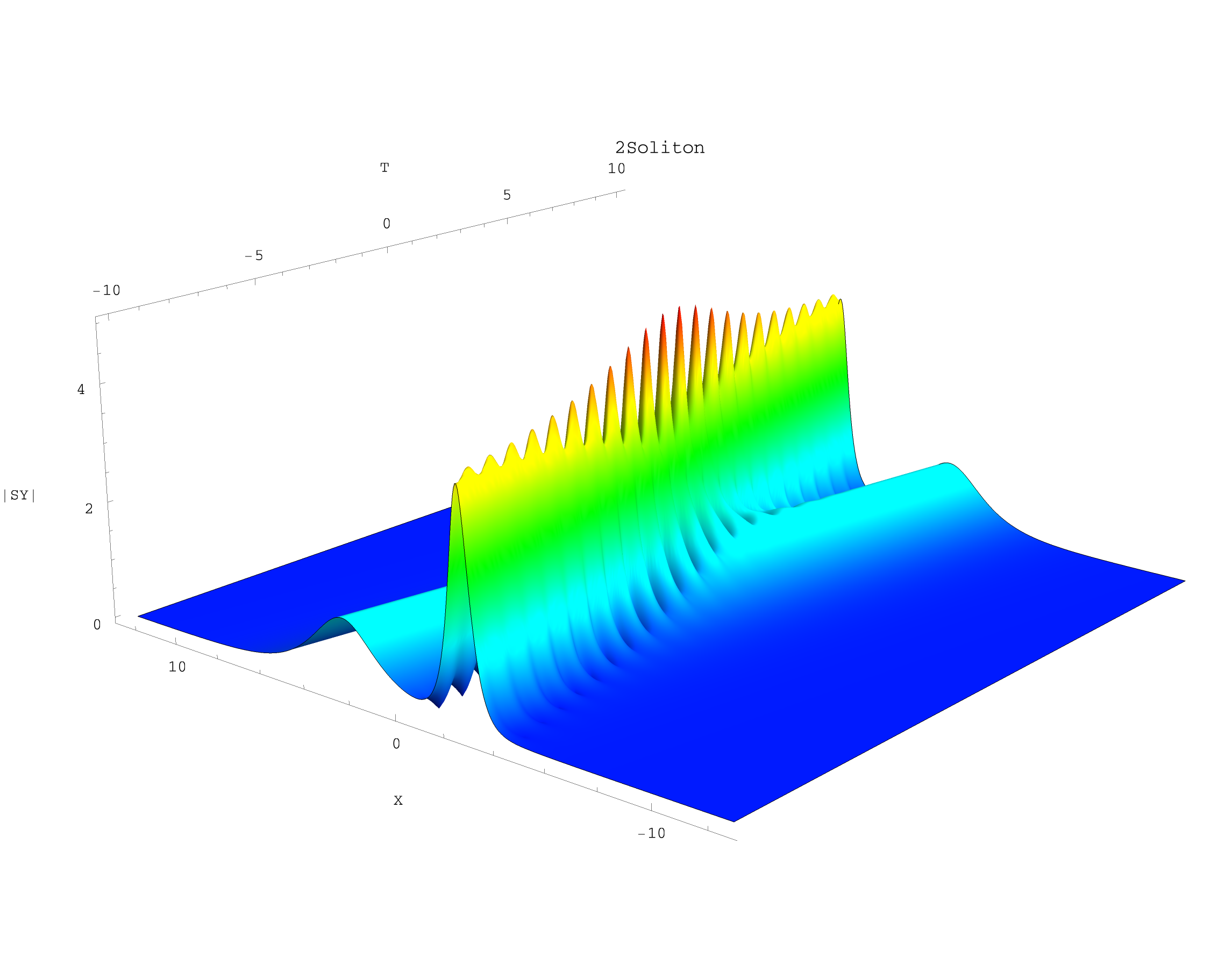}
              \caption{\emph{Left:} Absolute value of the SY  breather \eqref{SY}. Note the periodic in time behavior of this solution. \emph{Right:} 
              Absolute value of the double soliton \eqref{SYa1a2} close to (with $\alpha_1=0.1,~\alpha_2=0$) the SY breather \eqref{SY}. 
              The SY breather \eqref{SY} is part of the more complex family of NLS 2-solitons, and very small perturbations of the SY  breather 
              \eqref{SY} may lead to non breather solutions, like the one in \eqref{SYa1a2}. The left axis represents the $x$ variable, and the right axis, the $t$ variable.}\label{SY_fig}
  \end{figure}
%

\item[(iii)] {\bf The NLS case with nonzero background}. Finally, NLS with nonzero boundary condition, represented in \eqref{u_w}-\eqref{NLS}, possesses at least two important localized solutions characteristic of the \emph{modulational instability} phenomenon, which -roughly speaking- says that small perturbations of the exact Stokes solution $e^{it}$ are unstable and grow quickly. This unstable growth leads to a nontrivial competition with the (focusing) nonlinearity, time at which the solution is apparently stabilized.

\medskip

\begin{enumerate}

\medskip

\item[(iii.1)] {\bf The  Peregrine (P)  breather \cite{Peregrine}.} Given by
\be\label{P}
B_P(t,x):= 
e^{it}\left(1-\frac{4(1+2it)}{1+ 4t^2 +2x^2}\right),
\ee
which is a polynomially decaying (in space and time) perturbation of the nonzero background given by the Stokes wave $e^{it}$, which appears and disappears from nowhere \cite{Akhmediev2}. See Fig. \ref{P_KM} left for details. Some interesting connections have been made between the Peregrine soliton \eqref{P} and the intensely studied subject of \emph{rogue waves} in ocean \cite{Zakharov,Shrira,Akhmediev2,Kliber0} (see also \cite{BPSS} for an alternative explanation to the rogue wave phenomenon). Very recently, Biondini and Mantzavinos \cite{BM} showed, using inverse scattering techniques, the existence and long-time behavior of a global solution to \eqref{NLS} in the {integrable} case $(p=3)$, but under certain exponential decay assumptions at infinity, and a \emph{no-soliton} spectral condition (which, as far as we understand, does not define an open subset of the space of initial data). 

\medskip

Note that, because of time and space invariances in NLS, for any $t_0,x_0\in\R,$ $B_P(t-t_0,x-x_0)$ is also a Peregrine breather.

\medskip
\medskip

\item[(iii.2)] {\bf The Kuznetsov-Ma  (KM)  breather.}  The final object that we will consider in this paper is the Kuznetsov-Ma (KM) breather \cite{Kuznetsov,Ma}, given by the compact expression \cite{Akhmediev}
\be\label{KM}
\begin{aligned}
B_{KM}(t,x) : =  
e^{it}\Bigg[  1- \sqrt{2}\beta \frac{(\beta^2 \cos(\al t)  + i\al \sin(\al t)) }{ \al \cosh(\beta x) - \sqrt{2} \beta \cos(\al t)}\Bigg],& ~  \\
 \al :=  (8a(2a-1))^{1/2}, \quad \beta := (2(2a-1))^{1/2}, \quad a>\frac12 .& ~ \\
\end{aligned}
\ee
Notice that in the formal limit $a \downarrow \frac12$ one recovers the Peregrine breather.   See Fig. \ref{P_KM} right for details. Note that $B_{KM}$ is a Schwartz perturbation of the Stokes wave, and therefore a smooth classical solution of \eqref{NLS}.  It has been also observed in optical fibre experiments, see Kliber et al. \cite{Kibler} and references therein for a complete background on the mathematical problem and its physical applications.  
\end{enumerate}
\end{itemize}

          \begin{figure}[h!]
              \includegraphics[width=0.45\linewidth]{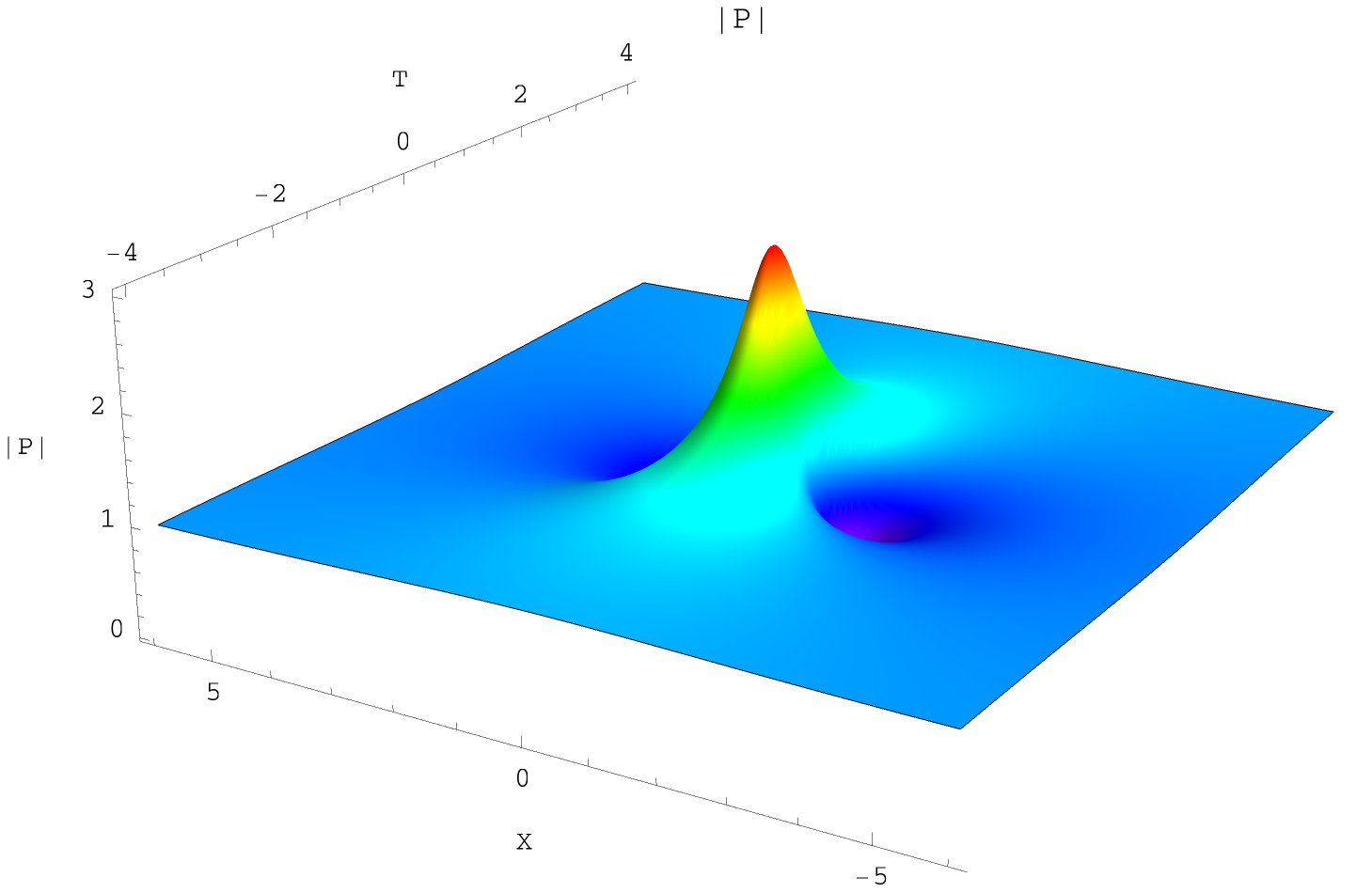}
              \includegraphics[width=0.45\linewidth]{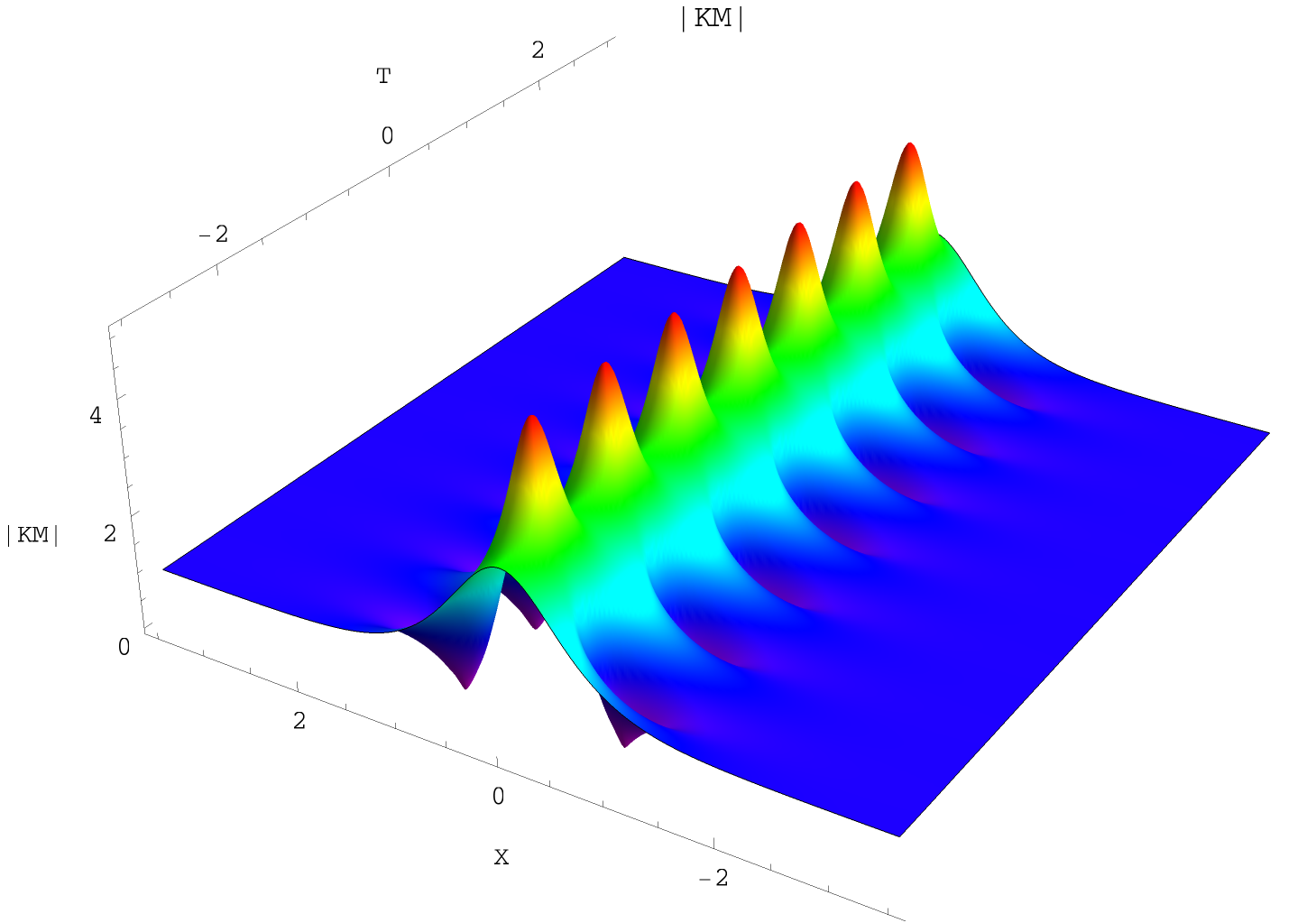}
              \caption{\emph{Left:} Absolute value of the P breather \eqref{P}. Note the localized character in space and time. \emph{Right:}  Absolute value of the KM breather \eqref{KM}. The left axis represents the $x$ variable, and the right one the $t$ variable.}\label{P_KM}
          \end{figure}

%
%
  
\medskip


Using a simple argument coming from the modulational instability of the equation \eqref{NLS}, in \cite{Munoz2} it was proved for the first time, and in a rigorous form, that both $B_{KM}$ and $B_{P}$ are {\bf unstable} with respect to perturbations in Sobolev spaces $H^s$, $s>\frac12$. Previously, Haragus and Klein \cite{KH} showed numerical instability of the Peregrine breather, giving a first hint of its unstable character. The proof of this result uses the fact that Peregrine and Kuznetsov-Ma breathers are in some sense converging to the background final state (i.e. they are {\bf asymptotically stable}) in the whole space norm $H^s(\R)$, a fact forbidden in Hamiltonian systems with conserved quantities and stable solitary waves. A further extension of this result, valid for periodic perturbations of the \emph{Akhmediev breather}, was proved in \cite{AFM}. Please see more details on the Akhmediev breather in \cite{AFM}.

\bigskip

\section{Main results}

The results in this paper can be characterized in two principal guidelines: a first one concerning a variational characterization for each breather above considered, and a second one related to stability and instability properties associated to that characterization.

\subsection{Variational characterization} Our first result is the following variational characterization of $B_{SS}$, $B_{SY}$, $B_P$ and $B_{KM}$ in \eqref{R}-\eqref{SY}-\eqref{P}-\eqref{KM}. 

We will also identify each dispersive model in this paper with its respective breather solution. Indeed, let
\[
SS=\hbox{Sasa-Satsuma \eqref{SS}}, \quad SY=\hbox{Satsuma-Yajima \eqref{NLS0}}, 
\]
and
\[
KM=\hbox{Kuznetsov-Ma \eqref{NLS}}, \quad P=\hbox{Peregrine \eqref{NLS}}. 
\]
Our first result is the following variational characterization of all these breather solutions. We will prove that, essentially, all of them satisfy the same nonlinear fourth order ODE, up to particular constants.

\begin{thm}[Elliptic equations satisfied by $U(1)$ breather solutions]\label{TH1}
Let $B=B_{X}$ be any of the solutions defined in \eqref{R}-\eqref{SY}-\eqref{P}-\eqref{KM}, with $X\in\{SS,SY,KM,P\}$. 
Then we have

\medskip

\ben 
\item  For $X=SS$,   $B=B_X$ satisfies
\be\label{perEcBp}
\begin{aligned}
& B_{(4x)} +8B_x^2 \bar B +14|B|^2 B_{xx}+6B^2 \bar B_{xx}  + 12|B_x|^2 B + 24 |B|^4 B \\
& \qquad - 2(\bt^2-\al^2) (B_{xx} + 4 |B|^2 B) +(\al^2+\bt^2)^2 B=0.
\end{aligned}
\ee

\medskip
\item If $X=SY$ and  $B=B_{SY}$,  
\be\label{Ec_SY}
\begin{aligned}
& B_{(4x)} + 3B_x^2 \bar B + 4 |B|^2B_{xx} + B^2 \bar B_{xx} + 2|B_x|^2 B   + \frac32 |B|^4B \\
&\qquad  - (c_2^2+c_1^2)(B_{xx} +  |B|^2 B) + c_2^2c_1^2B =0.
\end{aligned}
\ee

\medskip

\item For $X=KM$ and $\beta$ as in \eqref{KM}, $B=B_{KM}$ solves  
\be\label{Ec_KM}
\begin{aligned}
& B_{(4x)} + 3B_x^2 \bar B +(4 |B|^2-3) B_{xx}+ B^2 \bar B_{xx}  + 2  |B_x|^2 B  \\
&\qquad  + \frac32 (|B|^2-1)^2 B - \beta^2(B_{xx} +  (|B|^2-1) B) =0.
\end{aligned}
\ee
In particular, for $X=P$ one has that $B=B_P$ satisfies the limiting case
\be\label{Ec_P}
\begin{aligned}
& B_{(4x)} + 3B_x^2 \bar B +(4 |B|^2-3) B_{xx}+ B^2 \bar B_{xx}  + 2  |B_x|^2 B \\
&\qquad  + \frac32 (|B|^2-1)^2 B  =0.
\end{aligned}
\ee
\een
\end{thm}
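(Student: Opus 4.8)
The plan is to verify each of the four elliptic equations by direct substitution of the explicit breather formulas into the corresponding fourth-order ODE. The key structural observation guiding the approach is that all four equations share the same leading nonlinear differential operator; they differ only in the coefficients of the lower-order terms (the second-order and zeroth-order corrections). So the natural strategy is to treat the Sasa-Satsuma case \eqref{perEcBp} as the master computation and then recognize the NLS cases as specializations or limits.

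For the \textbf{Sasa-Satsuma breather}, I would exploit the fact that $B_{SS}(t,x)=Q_\beta(x+\gamma t+x_2)e^{i\Theta}$ with $\Theta=\alpha(x+\delta t+x_1)$ separates into a traveling profile times a linear phase. Writing $B=e^{i\Theta}R$ with $R=Q_\beta(\xi)$, $\xi=x+\gamma t+x_2$, I would compute the spatial derivatives $B_x,B_{xx},B_{(4x)}$ via the product rule, so that each derivative of $B$ produces a factor $e^{i\Theta}$ times a polynomial in $(i\alpha)$ and the $\xi$-derivatives of $R$. After factoring out $e^{i\Theta}$, the equation \eqref{perEcBp} becomes an ODE in $\xi$ for the profile $R=Q_\beta$. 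The hope is that the combination $-2(\beta^2-\alpha^2)(\cdots)+(\alpha^2+\beta^2)^2 B$ is precisely engineered so that the $\alpha$-dependent phase contributions cancel, leaving a pure profile identity satisfied by $Q_\beta$. First I would rescale to reduce $Q_\beta(x)=\beta Q(\beta x)$ to the base profile $Q$ in \eqref{SolQ}, absorbing the $\beta$ factors, and then verify the resulting identity for $Q$ using \eqref{speeds}. The most efficient route is to recall that $B_{SS}$ solves the Sasa-Satsuma PDE \eqref{SS}: since $B$ is a genuine solution, one can use the PDE itself to trade time derivatives for spatial ones, and the elliptic equation should emerge as a stationary (in the moving frame) reduction, i.e. from the conserved-quantity Euler-Lagrange structure rather than raw substitution.

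For the three \textbf{NLS breathers}, the cleanest approach is to observe that $B_{SY}$, $B_{KM}$, $B_P$ all solve \eqref{NLS0} or \eqref{NLS}, and that the target elliptic equations \eqref{Ec_SY}, \eqref{Ec_KM}, \eqref{Ec_P} are fourth-order in $x$ with no explicit $t$. This strongly suggests that each ODE is obtained by eliminating the time derivative: one computes $iB_t$ from the PDE (which expresses $B_t$ through $B_{xx}$ and the nonlinearity), differentiates or squares appropriately, and substitutes to remove $t$ altogether, producing a purely spatial identity valid for all $t$. Concretely, for $B_{SY}$ I would use $iB_t=-B_{xx}-|B|^2B$ together with the computed $B_{xx}$ and then form the second-order-in-time relation $B_{tt}$, matching it against the spatial operator in \eqref{Ec_SY}; the constants $c_1^2+c_2^2$ and $c_1^2c_2^2$ should appear as the two symmetric functions of the frequencies $c_1^2$ and $\gamma_+\gamma_-+c_1^2$ governing the time-periodicity. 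The Kuznetsov-Ma case \eqref{Ec_KM} follows identically but working with $w=e^{-it}B-1$ solving \eqref{NLS}, which accounts for the shifted terms $(|B|^2-1)$ and the $\beta^2$ coefficient, and the Peregrine equation \eqref{Ec_P} is recovered by taking the limit $a\downarrow\frac12$, i.e. $\beta\to 0$, as already noted in the text.

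The main obstacle I anticipate is the \textbf{Sasa-Satsuma computation}. Unlike the NLS breathers, whose underlying equation is second order in $x$, the SS equation \eqref{SS} is itself third order, so the fourth-order elliptic reduction involves genuinely higher derivatives of the complex profile $Q$, and $Q$ in \eqref{SolQ} is a ratio of exponentials with a complex parameter $\eta$, making the raw derivatives algebraically heavy. The delicate point is showing that the quartic term $24|B|^4B$ and the mixed terms $8B_x^2\bar B$, $6B^2\bar B_{xx}$, $12|B_x|^2B$ assemble correctly; keeping track of conjugates $\bar B$ versus $B$ is error-prone because $Q$ is not real. I would mitigate this by \emph{not} substituting the closed form of $Q$ directly, but instead deriving a first-order relation (a Riccati- or energy-type identity) that $Q$ satisfies as the stationary profile of the moving-frame SS equation, and then building \eqref{perEcBp} from that lower-order identity by differentiation — this avoids ever computing the fourth derivative of the explicit fraction by hand and makes the role of the structural constants $\gamma,\delta$ in \eqref{gamma_delta} transparent.
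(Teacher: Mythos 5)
Your phase--factorization step for the Sasa--Satsuma case coincides with the paper's (Lemma \ref{QODE}, proved in Appendix \ref{B}), but your key mitigation --- replacing explicit substitution of \eqref{SolQ} by a first-order Riccati- or energy-type integral of the stationary equation, and then obtaining \eqref{perEcBp} by differentiation --- runs into an obstruction the paper flags explicitly: the moving-frame profile equation \eqref{edoQ2} is third order and, as remarked right after it, it seems it \emph{cannot} be integrated one more time, so the lower-order identity you hope to differentiate is not available. More importantly, \eqref{perEcBp} is not a pure differential consequence of \eqref{edoQ2}: the paper differentiates \eqref{edoQ2} once, eliminates $Q_\beta'''$ using \eqref{edoQ2} again, and is still left with a nontrivial residual nonlinear identity \eqref{nlid1} coupling $Q$, $\bar Q$ and their derivatives, which is verified only by inserting the closed form \eqref{SolQ} and checking that all eight exponential coefficients $A_{\pm1},\dots,A_{\pm7}$ vanish (Appendix \ref{App_nlid1}). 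Your plan contains no mechanism producing \eqref{nlid1}, and since $Q$ is complex-valued (and possibly double-humped) there is no obvious scalar first integral to substitute for it; some explicit computation with the closed-form profile appears unavoidable, and this is precisely where the real work of the paper's proof lies.

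The time-elimination scheme you propose for the NLS cases has a more basic flaw: it presupposes that, after factoring a phase, $B$ satisfies a constant-coefficient second-order ODE in $t$, so that $c_1^2+c_2^2$ and $c_1^2c_2^2$ emerge as symmetric functions of two frequencies from a relation for $B_{tt}$. The actual breathers do not have two-frequency time dependence: the denominators of \eqref{SY} and \eqref{KM} contain $\cos(\gamma_+\gamma_- t)$ and $\cos(\alpha t)$ respectively, so $B$ carries infinitely many temporal harmonics, and $B_P$ in \eqref{P} is not time-periodic at all (it is algebraic in $t$). Consequently matching $B_{tt}$ against the spatial operator does not close, even though the constants are indeed symmetric functions of the spectral parameters. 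The paper instead proves \eqref{Ec_SY}, \eqref{Ec_KM} and \eqref{Ec_P} by direct substitution (Appendix \ref{Calculos}): writing $B=M/N$, respectively $e^{it}(1-M/N)$, expanding the left-hand side over a finite basis of hyperbolic--trigonometric monomials, and checking that every coefficient vanishes exactly for the stated $m_X,n_X$ --- in the SY case all coefficients turn out proportional to $(c_1^2+c_2^2-m)$ once $n=c_1^2m-c_1^4$ is imposed, which is how the constants are actually pinned down. Your observation that \eqref{Ec_P} is the $\beta\to0$ limit of \eqref{Ec_KM} matches a remark in the paper, but there it serves as a consistency check, with \eqref{Ec_P} proved independently by the same direct expansion.
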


\begin{rem}[Equivalence between $SS$ and $SY$ breathers]
Note that, except by some particular constants, $SS$ and $SY$ breathers satisfy {\bf the same} variational, fourth order elliptic equation. This fact reveals a deep connection between the $SS$ and $NLS$ integrable models. The case of $KM$ and $P$ breathers slightly differs from the previous cases because of suitable modifications appearing from their nonzero boundary value at infinity.
\end{rem}

\begin{rem}[New connections between $KM$ and $P$ breathers]
Note that the elliptic equation for the $P$ breather \eqref{Ec_P} is directly obtained by the formal limit $\beta\to 0$ in the $KM$ elliptic equation \eqref{Ec_KM}. This is in concordance with the expected behavior of the $KM$ breather as $a\to \frac12^+$, see \eqref{KM}.
\end{rem}

Theorem \ref{TH1} will be a particular consequence of the following variational characterization of each breather above mentioned. Recall that for $m\in\N$, the vector space $H^m(\R;\Com)$ corresponds to the Hilbert space of complex-valued functions $f:\R\to\Com$, with $m$ derivatives in $L^2(\R;\Com)$, endowed with the standard norm.

\begin{thm}[Variational characterization]\label{TH1a}
Each breather mentioned in Theorem \ref{TH1} is \emph{critical point} of a real-valued functional of the form
\be\label{H}
\mathcal H_X[u] := F_X[u] + m_X E_X[u] + n_X M_X[u], 
\ee
where
\ben
\item $F_X$, $E_X$ and $M_X$ are respective $H^2$, $H^1$ and $L^2$ based {\bf conserved quantities} for the dispersive model $X$ around the zero background or the Stokes wave $e^{it}$, depending on the particular limit value of the breather at infinity. Here, $E_X$ and $M_X$ corresponds to suitable energy and mass, respectively; 
\smallskip
\item $\mathcal H_X$ is well-defined for $u \in B_X+H^2(\R;\Com)$;
\smallskip
\item This functional is conserved for $H^2$ perturbations of the respective dispersive model $X$.
\smallskip
\item $m_X,n_X\in\R$ are well-chosen parameters, depending only on the {\bf nontrivial internal parameters} of the breather $B_X$; in particular: 
\smallskip
\ben
\item For $X=SS$, one has $m_X=- 2(\bt^2-\al^2) $ and $n_X=(\al^2+\bt^2)^2$.
\item For $X=SY$, one has $m_X=(c_2^2+c_1^2)$ and $n_X=c_2^2c_1^2$.
\item For $X=KM$, one has $m_X=+ \beta^2$ and $n_X=0$.
\item For $X=P$, one has $m_X=n_X=0$.
\een
\smallskip
\item Each breather $B_X$ is a critical point for the functional $\mathcal H_X,$ in the sense that for $X\in \{SS,SY,KM,P\}$,
\be\label{Hp}
\mathcal H_X'[B_X](z)=0, \quad \hbox{for all } ~ z\in H^2(\R;\Com). 
\ee
\een
\end{thm}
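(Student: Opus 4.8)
The plan is to make explicit, for each model $X$, the three integrals of motion $F_X$, $E_X$, $M_X$ entering \eqref{H}, then to compute the first variation of $\mathcal H_X$ and match the resulting Euler--Lagrange operator with the left-hand side of the elliptic equation of Theorem~\ref{TH1}; the critical point property \eqref{Hp} then follows at once from the fact, established in Theorem~\ref{TH1}, that each $B_X$ solves that equation.

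First I would record the conserved quantities. For the Sasa--Satsuma model \eqref{SS}, which is a complex-valued mKdV, $M_X[u]=\int_\R|u|^2\,dx$ and a suitable mKdV energy $E_X$ (a combination of $\int_\R|u_x|^2\,dx$ and $\int_\R|u|^4\,dx$) are the first two invariants of the mKdV hierarchy, while $F_X$ is the next invariant, an $H^2$-level functional whose density is quadratic in $u_{xx}$ with polynomial lower-order corrections. For NLS with zero background \eqref{NLS0} (the $SY$ case) the analogous triple is the mass, the NLS energy $E_X[u]=\int_\R\big(|u_x|^2-\tfrac12|u|^4\big)\,dx$, and the third NLS invariant $F_X$. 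For the nonzero-background cases ($KM$ and $P$, governed by \eqref{NLS}) I would instead use the \emph{renormalized} versions of these functionals, adapted to the Stokes wave $e^{it}$, so that they are finite on the affine space $B_X+H^2(\R;\Com)$; verifying this finiteness and the associated conservation along the flow is exactly items~(1)--(3) of the statement, and it is the first genuinely delicate point, since the bare mass and energy diverge on the non-decaying background.

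Next I would compute $\mathcal H_X'[u](z)$ for arbitrary $u\in B_X+H^2$ and $z\in H^2(\R;\Com)$, viewing $H^2$ as a real Hilbert space. Since every density is a polynomial in $u,\bar u$ and their derivatives up to second order, the Gateaux derivative is obtained by repeated integration by parts, the boundary terms vanishing by decay (or by the renormalization in the $KM$/$P$ cases), and it takes the form $\mathcal H_X'[u](z)=\re\int_\R G_X[u]\,\bar z\,dx$ for an explicit fourth-order differential polynomial $G_X[u]$. Here the parameters $m_X,n_X$ enter linearly: the variation of $E_X$ produces the $B_{xx}$- and cubic-type terms, and the variation of $M_X$ the zeroth-order term, so that the stated values of $m_X,n_X$ in item~(4) are forced by matching coefficients, while the variation of the top functional $F_X$ reproduces the fourth-order and quadratic-in-derivative terms. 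Comparing with \eqref{perEcBp}--\eqref{Ec_P} shows that $G_X[u]$ equals, up to a fixed overall constant, the left-hand side of the elliptic equation of Theorem~\ref{TH1}.

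Finally, substituting $u=B_X$ and invoking Theorem~\ref{TH1} gives $G_X[B_X]=0$, hence $\mathcal H_X'[B_X](z)=\re\int_\R 0\cdot\bar z\,dx=0$ for every $z\in H^2(\R;\Com)$, which is \eqref{Hp}. I expect the main obstacle to be twofold: identifying the higher-order invariant $F_X$ in each hierarchy and checking that its first variation reproduces the delicate top-order nonlinear terms (such as $8B_x^2\bar B$ and $14|B|^2B_{xx}$ in \eqref{perEcBp}) with the exact numerical coefficients; and the renormalization bookkeeping for the $KM$ and $P$ breathers, where finiteness and conservation must be proved on $B_X+H^2$ rather than on $H^2$ itself. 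Once $F_X$ is pinned down, the remaining work is a careful but routine sequence of integrations by parts.
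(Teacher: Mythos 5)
Your proposal is correct and follows essentially the same route as the paper: build $\mathcal H_X$ from the mass, energy and $H^2$-level invariant (renormalized around the Stokes wave in the $KM$/$P$ cases so items (1)--(3) hold on $B_X+H^2$), expand $\mathcal H_X[B_X+z]$ by integration by parts to identify the first variation as $2\re\int \bar z\, G_X[B_X]$ with $G_X$ exactly the fourth-order operator of Theorem~\ref{TH1}, and then deduce \eqref{Hp} from $G_X[B_X]=0$. This is precisely the paper's logic (Proposition~\ref{Decomposition_Prop} plus the direct algebraic verification of $G_X[B_X]\equiv 0$ in Section~\ref{Sect:4} and the appendices, which is the content of Theorem~\ref{TH1} that you legitimately invoke).
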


\begin{rem}
Theorem \ref{TH1a} states that all $U(1)$ breathers considered in this paper (and possibly several others not considered here 
by length considerations, such as Davey-Stewartson \cite{KS1,KS2} and the Manakov system \cite{Yang}) satisfy {\bf the same variational characterization}. This property exactly coincides in the $SS$ case with the classical mKdV characterization \cite{AM}; 
however, in the remaining $SY$, $KM$ and $P$ cases, it certainly differs in the choice of respective constants for the construction of $\mathcal H_X$.
\end{rem}

\begin{rem}
Theorem \ref{TH1a} also reveals that $KM$ and $P$ breathers obey, in some sense, {\bf degenerate variational characterizations}. More precisely, 
the $KM$ breather characterization does not require the use of the $L^2$ based mass term $M_{KM}$, and even worse, the $P$ breather does not require the mass and the energy $M_P$ and $E_P$, respectively:
\[
F_P'[B_P]\equiv 0.
\]
The absence of these two quantities may be related to the fact that 
\[
M_P[B_P]=E_P[B_P]=0, \qquad \hbox{(see Remark \ref{CL}),}
\]
meaning a particular form of instability (recall that mass and energy terms are somehow convex terms aiding to the stability of solitonic structures). We would like to further stress the fact that the variational characterization of the famous Peregrine breather is in $H^2$, since mass and energy are useless. See also Remark \ref{CLmom} for more about the zero character of $KM$ and $P$ conservation laws.
\end{rem}

\begin{rem}
We believe that  Theorem \ref{TH1a} describes for the first time, as far as we understand, the variational characterization of the Peregrine breather. It also describes in simple terms the connection between the Kuznetsov-Ma and Peregrine breathers.
\end{rem}

\begin{rem}
Theorem \ref{TH1} will be a (not so direct) consequence of the critical point character of each breather in Theorem \ref{TH1a}, identity \eqref{Hp}. Section \ref{Sect:4} is devoted to the proof of this fact.
\end{rem}

The proof of Theorem \ref{TH1} is simple, variational and follows previous ideas presented in \cite{AM} for the case of mKdV breathers, and \cite{AMP1} for the case of the Sine-Gordon breather (see also \cite{MP} for a recent improvement of this last result, based in \cite{AM1}).  The main differences are in the complex-valued nature of the involved breathers, and the nonlocal character of the $KM$ and $P$ breathers. Some special attention must be put to find the constants $m_X$ and $n_X$ above, a task that required some time and a large amount of computations, but finally we have found each of them. 

\medskip

\subsection{Stability and instability results} Next, we establish some stability and instability properties for the considered breathers. As usual, we start out with the $SS$ case. In this paper, we show nonlinear stability of this breather.

\begin{thm}[Nonlinear stability of the SS breather]\label{TH2}
The  SS breather \eqref{R} is orbitally stable in $H^2(\R;\Com)$. 
\end{thm}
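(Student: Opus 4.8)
The plan is to exploit the variational characterization established in Theorem~\ref{TH1a}, following the strategy of \cite{AM} adapted to the complex-valued, third-order $SS$ setting. The $SS$ breather $B_{SS}$ is a critical point of the $H^2$-based functional $\mathcal H_{SS}[u] = F_{SS}[u] + m_{SS} E_{SS}[u] + n_{SS} M_{SS}[u]$, with $m_{SS}=-2(\bt^2-\al^2)$ and $n_{SS}=(\al^2+\bt^2)^2$, and this functional is conserved along the $SS$ flow for $H^2$ perturbations. The orbital stability argument then rests on showing that $\mathcal H_{SS}$ has a \emph{strict local minimum} structure at the orbit of $B_{SS}$, modulo the continuous symmetries of \eqref{SS} (spatial translation and the $U(1)$ phase $u\mapsto ue^{i\ga}$, and possibly a shift in the internal phase $\Theta$). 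Concretely, I would expand $\mathcal H_{SS}[B_{SS}+z]$ for $z\in H^2(\R;\Com)$; the first-order term vanishes by \eqref{Hp}, leaving
\be\label{plan:expansion}
\mathcal H_{SS}[B_{SS}+z] - \mathcal H_{SS}[B_{SS}] = \tfrac12 \mathcal Q_{SS}[z] + N[z],
\ee
where $\mathcal Q_{SS}$ is the quadratic form given by the second variation $\mathcal H_{SS}''[B_{SS}]$ and $N[z]$ collects cubic-and-higher terms controlled by $\|z\|_{H^2}$.

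The central step is a \textbf{coercivity estimate} for the quadratic form: I would prove that there exists $\mu>0$ such that
\be\label{plan:coercivity}
\mathcal Q_{SS}[z] \geq \mu \|z\|_{H^2}^2,
\ee
for all $z$ in the subspace of $H^2(\R;\Com)$ orthogonal to the finite-dimensional space generated by the symmetry directions $\partial_x B_{SS}$, $iB_{SS}$ (and the other infinitesimal generators). Because $\mathcal Q_{SS}$ is the Hessian of a fourth-order functional, it is a self-adjoint operator of order four acting on a two-component (real/imaginary) system; the breather's periodicity and the embedded-soliton structure make a direct spectral analysis delicate, so I would instead proceed, as in \cite{AM}, by algebraic manipulation—completing squares using the elliptic equation \eqref{perEcBp} and its derivatives to rewrite $\mathcal Q_{SS}$ as a sum of manifestly nonnegative terms plus lower-order corrections, thereby avoiding a full Sturm--Liouville count. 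The kernel of $\mathcal Q_{SS}$ should be exactly spanned by the symmetry generators, which is why the orthogonality (or, equivalently, the modulation-parameter) conditions are imposed.

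To pass from the coercivity of the quadratic form to orbital stability, I would introduce \textbf{modulation parameters} for the continuous symmetries: given a perturbed datum close to $B_{SS}$, choose time-dependent shifts $x_2(t)$, phases $\ga(t)$, $x_1(t)$ so that the remainder $z(t)$ satisfies the orthogonality conditions required in \eqref{plan:coercivity}. The implicit function theorem yields these parameters for data in a small $H^2$ tube around the orbit, and the conservation of $\mathcal H_{SS}$ together with \eqref{plan:expansion}--\eqref{plan:coercivity} gives
\be\label{plan:control}
\mu \|z(t)\|_{H^2}^2 \leq 2\big(\mathcal H_{SS}[u(t)] - \mathcal H_{SS}[B_{SS}]\big) - 2N[z(t)] = 2\big(\mathcal H_{SS}[u_0]-\mathcal H_{SS}[B_{SS}]\big) - 2N[z(t)],
\ee
and a standard continuity/bootstrap argument then closes the estimate, controlling $\|z(t)\|_{H^2}$ uniformly in time by the initial distance, which is exactly orbital stability in $H^2(\R;\Com)$.

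The main obstacle I anticipate is the coercivity estimate \eqref{plan:coercivity}. Unlike the scalar mKdV case of \cite{AM}, here the second variation is a $2\times 2$ vector-valued fourth-order operator whose potential coefficients depend on the genuinely complex, non-sign-definite profile $Q$ in \eqref{SolQ}, and the breather lives \emph{embedded} in the continuous spectrum, so spectral gaps are not automatic. Establishing that the only null directions are the symmetry generators—and ruling out additional negative or zero modes coming from the double-humped regime $|\eta|\le 1/2$—will require carefully exploiting the algebraic identities coming from \eqref{perEcBp} and, most likely, a decomposition into real and imaginary parts adapted to the $U(1)$ structure. Handling both the single- and double-humped cases uniformly, without a case-by-case numerical spectral computation, is where I expect the bulk of the technical work to lie.
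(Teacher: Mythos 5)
There is a genuine gap, and it sits exactly at your central step. Your coercivity estimate asserts positivity of $\mathcal Q_{SS}$ on the subspace orthogonal only to the symmetry generators ($\partial_x B_{SS}$, $iB_{SS}$, i.e.\ the kernel directions $\partial_{x_2}B$, $\partial_{x_1}B$). This is false: the paper proves, via the Greenberg/Maddocks--Sachs Wronskian count (Lemma~\ref{Wr1} and the subsequent root analysis of $u_{1,+}$, $u_{2,\pm}$), that $\mathcal L_{SS}$ in \eqref{perEcBp_1} has \emph{exactly one negative eigenvalue} for all $\al,\bt>0$, in both the single- and double-humped regimes. Its eigenfunction $B_{-1}$ is automatically $L^2$-orthogonal to $\ker\mathcal L_{SS}=\spawn\{\partial_{x_1}B,\partial_{x_2}B\}$ by self-adjointness, so it satisfies your orthogonality conditions while $\re\int \overline{B}_{-1}\,\mathcal L_{SS}B_{-1}<0$. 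Concretely, Lemma~\ref{negative} exhibits the negative direction $\bt\,\partial_{\al}B+\al\,\partial_{\bt}B$: it comes from the \emph{internal} parameters of the breather, not from symmetries, so no modulation in translation and phase can remove it. The paper neutralizes it with an additional, non-modulated constraint: from $\mathcal L_{SS}B_0=-B$ and $\re\int \overline{B}_0 B>0$ (see \eqref{DaB_DbB}--\eqref{L_DaB_DbB}) one gets the penalized coercivity \eqref{Coer}, whose deficit term $\big(\re\int z\overline{B}\big)^2$ is then controlled at the final step by conservation of the mass $M_{SS}$ in \eqref{M}. Your bootstrap uses only conservation of $\mathcal H_{SS}$, so as written it cannot close; you need the mass (or an equivalent constraint matching the single negative mode) as a second conserved quantity.

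Two further points of comparison. First, your proposed route to positivity---completing squares using the elliptic equation \eqref{perEcBp} to avoid a spectral count---is the opposite of what the paper does: it runs a genuine spectral analysis, namely Weyl's theorem for the essential spectrum (Lemma~\ref{Cont_Spec}), kernel nondegeneracy by counting decaying solutions of the constant-coefficient asymptotic system (Lemma~\ref{Nondege}, exploiting that the second component of $\mathcal L_{SS}[z]=0$ is the conjugate of the first), and the oscillation-theoretic eigenvalue count, before assembling coercivity in Proposition~\ref{PropOrtog} by decomposing along $\spawn(B_{-1},B_1,B_2)$. Second, your anticipated obstacle---that the embedded-soliton character precludes a spectral gap---conflates the linearization of the $SS$ flow with the Hessian of the Lyapunov functional: for the latter, $\sigma_c(\mathcal L_{SS})$ is $[(\al^2+\bt^2)^2,+\infty)$ or $[4\al^2\bt^2,+\infty)$, strictly positive precisely because $n_{SS}=(\al^2+\bt^2)^2>0$, so the gap is automatic and the real difficulty is the negative eigenvalue described above.
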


A more precise statement of stability is given in Theorem \ref{TH2a}. The proof of Theorem \ref{TH2} follows the ideas in \cite{AM}, but the proofs are considerably harder, because of the complex-valued character of the involved linearized operator around the breather solution. After some nontrivial preliminary results, we prove that this linear operator is nondegenerate and has only a unique negative eigenvalue, a property shared by the mKdV breather. Recall that the mKdV breather is real-valued, and proofs are considerably simpler in that case. Theorem \ref{TH2} is, as far as we understand, the first rigorous nonlinear stability result for a $U(1)$ symmetry breather.

\medskip

Our proof does work even in the {\bf double humped case}, despite the fact that in this case the linearized operator $\mathcal H_{SS}''[B_{SS}]$ has a more complex structure. No such nonlinear stability result was known in the literature, even in the single humped case. 

\medskip

Now we consider the SY breather. Recall that it is well-known that the SY breather is unstable, see e.g. \cite{Yang}. However, this lack of stability is only mild, in the sense that the SY breather \eqref{SY} is instead part of a larger family of 2-soliton states $B_{SY,gen}$, given by a complicated formula, see \eqref{SYa1a2}. This larger family is indeed, stable, as it was proved by Kapitula \cite{Kap}. Further details on the variational structure of the full 2-soliton family, in the spirit of Theorem \ref{TH1}, can be found in Appendix \ref{2solitonNLS}. On the other hand, the construction of $N$-solitons in the nonintegrable NLS cases has been carried out for the first time by Martel and Merle \cite{MManihp}, and more recently by Nguyen \cite{Nguyen1}. Note that in this last reference, a breather like solution such as the SY breather \eqref{SY} has not yet been constructed. The stability of these nonintegrable  $N$-soliton solutions has been addressed in $H^1$ and for some particular nonlinearities (essentially supercritical), see \cite{MMT2}. Finally, nonexistence of NLS breathers with the oddness parity property and any nonlinearity has been recently proved in \cite{MEM}. 

\medskip

Finally, we consider the case of $KM$ and $P$ breathers. Recall that \emph{both are unstable}, see \cite{Munoz2}. In this paper we further improve the results in \cite{Munoz2} by showing the following nonlinear instability property:

\begin{thm}[Direction of instability of the Peregrine breather]\label{TH4}
Let $B=B_P$ be a Peregrine breather, critical point of the functional $\mathcal H_P$ defined in \eqref{H}. Then the following is satisfied. Let $z_0\in H^2$ be any sufficiently small perturbation. Then, as $t\to -\infty$,
\be\label{Insta_P}
\begin{aligned}
\mathcal H_P'[B_P](z_0)= &~{} 0, \quad  \hbox{ but}\\
\mathcal H_P''[B_P](z_0,z_0)= &~{} \frac12 \int  (|w_{x}|^2 -|w|^2 - w^2 )(t) +O(\|z_0\|_{H^1}^3) +o_{t\to +\infty}(1),
\end{aligned}
\ee
where $w=w(t):= e^{-it} \partial_x z_0\in H^1$. 
\end{thm}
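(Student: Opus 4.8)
The plan is to establish the two assertions of \eqref{Insta_P} separately. The first identity, $\mathcal H_P'[B_P](z_0)=0$, is immediate: it is precisely the critical point property \eqref{Hp} from Theorem \ref{TH1a} specialized to $X=P$, valid for every $z_0\in H^2(\R;\Com)$. The substance of the theorem lies entirely in computing the second variation $\mathcal H_P''[B_P](z_0,z_0)$ and showing that, after transport to the reference frame of the Stokes wave, it reduces asymptotically as $t\to-\infty$ to the explicitly indefinite quadratic form $\tfrac12\int(|w_x|^2-|w|^2-w^2)$. Note that since $B_P$ is a Peregrine breather and $\mathcal H_P$ carries $m_P=n_P=0$ (so $\mathcal H_P=F_P$ alone, with no mass or energy terms), the functional is purely the $H^2$-based conserved quantity; this simplifies the computation of the Hessian considerably, as there are no lower-order convex contributions to track.

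First I would write out $\mathcal H_P''[B_P]$ as an explicit quadratic form in the perturbation. Since the linearization of the fourth-order elliptic equation \eqref{Ec_P} governs the second variation, differentiating the density of $F_P$ twice at $B_P$ yields a self-adjoint operator whose quadratic form involves $B_P$ and its derivatives as (time-dependent) coefficients. The key structural input is the asymptotic behavior of the Peregrine breather: from \eqref{P}, as $|t|\to\infty$ one has $B_P(t,x)\to e^{it}$ uniformly on compact sets, and more precisely $B_P(t,x)-e^{it}\to 0$ in appropriate norms because the correction term $4(1+2it)/(1+4t^2+2x^2)$ decays. I would substitute the change of variables $w=e^{-it}\partial_x z_0$ suggested in the statement; the appearance of $\partial_x z_0$ rather than $z_0$ reflects that the natural perturbation variable at the background level, after the Stokes-wave gauge $e^{-it}$ is removed, is the derivative, matching the $H^1$ setting announced in the theorem ($w\in H^1$). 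Under this substitution the leading-order coefficients of the Hessian collapse: the terms proportional to $|B_P|^2$, $B_P^2$, $B_P\bar B_P$ converge to their background values $|e^{it}|^2=1$, $e^{2it}$, $1$, and the fourth-order operator degenerates to the second-order form displayed.

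The main obstacle I expect is controlling the error terms uniformly as $t\to-\infty$. Specifically, I must show that replacing the breather coefficients by their background limits produces a remainder that is $o_{t\to+\infty}(1)$ (i.e. vanishing asymptotically) together with the cubic-in-$\|z_0\|_{H^1}$ term arising from the nonlinear (higher than quadratic) contributions to $\mathcal H_P$. The delicate point is that $B_P-e^{it}$ decays only polynomially and carries the growing factor $2it$ in its numerator; I would need to verify that the spatial decay of order $x^{-2}$ dominates, so that integrals against the fixed $H^1$ perturbation $w$ indeed tend to zero. I would handle this by splitting each coefficient as background plus localized correction, using that the corrections are integrable in $x$ with $L^\infty$ norm decaying in $t$, and invoking dominated convergence to send the correction integrals to zero while the background integrals assemble into $\tfrac12\int(|w_x|^2-|w|^2-w^2)$. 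The cubic error $O(\|z_0\|_{H^1}^3)$ is then the standard Taylor-remainder bound on the nonlinear part of the density, controlled by Sobolev embedding $H^1(\R)\hookrightarrow L^\infty(\R)$. Once these two limits are secured, comparison with the linearized background form \eqref{LS}--\eqref{phi_eqn} shows the resulting quadratic form is indefinite, which is the sought direction of instability.
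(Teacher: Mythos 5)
Your proposal is correct and follows essentially the same route as the paper: the linear term vanishes by the critical-point property (i.e. $\mathcal G_P\equiv 0$), the quadratic form $\mathcal Q_P$ is reduced to the Stokes-wave form $\mathcal Q_{e^{it}}$ using that $\widetilde B_P:=B_P-e^{it}$ and all its $x$-derivatives tend to $0$ in $L^\infty$ as $|t|\to\infty$, and the substitution $w=e^{-it}\partial_x z_0$ produces $\tfrac12\int(|w_x|^2-|w|^2-w^2)$ with a cubic Taylor remainder. The only, immaterial, difference is in the error control, where the paper invokes Cauchy--Schwarz with the uniform $L^\infty$ decay of $\partial_x^k\widetilde B_P$ directly, rather than your splitting into background plus spatially integrable correction with dominated convergence.
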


\begin{rem}
The previous result gives a precise expression for the lack of stability in Peregrine breathers. Essentially, the continuous spectrum of the second derivative of the Lyapunov functional $\mathcal H_P$ stays below zero, a phenomenon that induces exponential growth in time for arbitrary perturbations of the associated linear dynamics. 
\end{rem}


\begin{rem}
Theorem \ref{TH4} can be recast as an absence of spectral gap for the linearized dynamics; we will not pursue this fact in the Peregrine case, but instead we will exemplify this fact using the Kuznetsov-Ma breather KM.
\end{rem}

In the case of the KM breather, things are more complicated, and the previous result is not valid, since $B_{KM}$ does not decay to the Stokes wave at time infinity (recall that KM breather oscillates around a Schwartz perturbation of the Stokes wave). Instead, we will prove the following

\begin{thm}[Absence of spectral gap and instability of the KM breather]\label{TH5}
Let $B=B_{KM}$ be a Kuznetsov-Ma breather \eqref{KM}, critical point of the functional $\mathcal H_{KM}$ defined in \eqref{H}. Then for all $a>\frac12$ we have
\be\label{espectro}
\begin{aligned}
\mathcal H_P'[B_{KM}]= &~ {} 0,\\
\mathcal H_{KM}''[B_{KM}](\partial_x B_{KM}) =&~{} 0, \\
\inf \sigma_c (\mathcal H_{KM}''[B_{KM}]) <&~ {} 0.%
\end{aligned}
\ee
Here $\sigma_c$ stands for the continuum spectrum of the linear operator associated to $\mathcal H_{KM}''[B_{KM}]$.
\end{thm}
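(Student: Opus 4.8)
The proof splits according to the three identities in \eqref{espectro}. The first one, which should read $\mathcal H_{KM}'[B_{KM}]=0$, is nothing but the critical point property \eqref{Hp} of Theorem \ref{TH1a} specialized to $X=KM$, so no further work is needed there. For the second identity I would exploit the translation invariance of the model: since the coefficients of the fourth order elliptic equation \eqref{Ec_KM} do not depend explicitly on $x$, differentiating \eqref{Ec_KM} in $x$ shows that $\partial_x B_{KM}$ solves the linearized equation, i.e. it lies in the kernel of the self-adjoint operator $\mathcal L:=\mathcal H_{KM}''[B_{KM}]$ defined through $\mathcal H_{KM}''[B_{KM}](z,z)=\langle \mathcal Lz,z\rangle_{L^2}$. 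Since $B_{KM}-e^{it}$ is a Schwartz perturbation of the Stokes wave, $\partial_x B_{KM}\in H^2$ and the identity $\mathcal L\,\partial_x B_{KM}=0$ is legitimate.

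The heart of the matter is the third identity, $\inf\sigma_c(\mathcal L)<0$. The plan is to identify the continuous (essential) spectrum of $\mathcal L$ with the spectrum of its asymptotic, constant-coefficient operator $\mathcal L_\infty$ obtained as $|x|\to\infty$. Indeed $\mathcal L$ is the linearization of the left-hand side of \eqref{Ec_KM} at $B_{KM}$; since $B_{KM}\to e^{it}$ with all the breather-profile coefficients decaying rapidly, the difference $\mathcal L-\mathcal L_\infty$ is, up to second derivatives, a decaying perturbation and should be relatively compact with respect to $\mathcal L_\infty$, so that by Weyl's theorem $\sigma_{ess}(\mathcal L)=\sigma(\mathcal L_\infty)$. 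As $\mathcal L_\infty$ is constant-coefficient on the line, its spectrum is purely absolutely continuous, hence $\sigma_c(\mathcal L)=\sigma(\mathcal L_\infty)$ and it suffices to prove $\inf\sigma(\mathcal L_\infty)<0$. Although $B_{KM}$ is genuinely $t$-periodic, the background $e^{it}$ has constant modulus, so after the unitary phase rotation $z=e^{it}\zeta$ the operator $\mathcal L_\infty$ becomes $t$-independent and the conclusion holds for every time.

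To compute $\mathcal L_\infty$ I would linearize \eqref{Ec_KM} around the constant state $B=b_0$, $|b_0|=1$: all terms carrying a factor $B_x$ or $(|B|^2-1)$ in front of a lower order quantity drop out, and one is left with
\be\label{Linf_plan}
\mathcal L_\infty z = \partial_x^4 z + (1-\beta^2)\partial_x^2 z + b_0^2\,\partial_x^2\bar z - 2\beta^2 b_0\,\re(\bar b_0 z).
\ee
Writing $z=b_0\zeta$ with $\zeta=p+iq$ and using $|b_0|=1$, the operator decouples into its real and imaginary parts,
\be\label{blocks_plan}
\zeta\longmapsto \big(\partial_x^4 + (2-\beta^2)\partial_x^2 - 2\beta^2\big)p \ + \ i\big(\partial_x^4 - \beta^2\partial_x^2\big)q,
\ee
which in Fourier variables are the scalar multipliers $k^4-(2-\beta^2)k^2-2\beta^2$ and $k^4+\beta^2 k^2\ge 0$. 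The second is nonnegative, but the first equals $-2\beta^2<0$ at $k=0$ (and attains an even smaller value at $k^2=(2-\beta^2)/2$ when $\beta^2<2$). Hence $\inf\sigma(\mathcal L_\infty)\le -2\beta^2=-4(2a-1)<0$ for every $a>\tfrac12$, which is exactly the claimed absence of spectral gap.

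The step I expect to be the main obstacle is the rigorous justification that $\mathcal L-\mathcal L_\infty$ is relatively compact with respect to $\mathcal L_\infty$, and thus the application of Weyl's theorem, since this is a fourth order, matrix-valued operator (genuinely $\R$-linear on $\Com$-valued functions) rather than a scalar Schr\"odinger operator; one must check that the decaying, $B_{KM}$-dependent coefficients multiplying derivatives up to second order define $\mathcal L_\infty$-compact operators on $L^2$, uniformly in $t$. A useful consistency check is the Peregrine limit $\beta\to 0$, in which the real multiplier degenerates to $k^2(k^2-2)$, negative precisely on $0<|k|<\sqrt2$; this recovers the modulational instability dispersion relation $\omega(k)=\pm|k|\sqrt{k^2-2}$ of \eqref{phi_eqn} and matches the negative quadratic form $\tfrac12\int(|w_x|^2-|w|^2-w^2)$ appearing in Theorem \ref{TH4}.
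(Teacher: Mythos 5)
Your proposal is correct and follows essentially the same route as the paper: the first two identities are deduced exactly as you say from Theorems \ref{TH1a} and \ref{TH1} (criticality plus translation invariance, your observation that the stated $\mathcal H_P'[B_{KM}]$ should read $\mathcal H_{KM}'[B_{KM}]$ included), and for the third the paper likewise invokes Weyl's theorem with the exponential decay of $B_{KM}-e^{it}$ to replace $\mathcal L$ by the constant-coefficient operator \eqref{Ec_KM_1_new}, whose Fourier symbol it diagonalizes into the two scalar multipliers $\xi^4+\beta^2\xi^2\geq 0$ and $\xi^4+(\beta^2-2)\xi^2-2\beta^2$ --- precisely your two blocks, your phase rotation $z=e^{it}\zeta$ followed by the real/imaginary split being the same diagonalization in different notation. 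The only differences are cosmetic: the paper records the exact infimum in \eqref{sigma_c} ($-2\beta^2$ for $\beta\geq\sqrt2$, $-\tfrac14(2-\beta^2)^2-2\beta^2$ for $\beta\in(0,\sqrt2)$) where you only need the bound $\leq-2\beta^2<0$, and the relative-compactness step you flag as delicate is dispatched in the paper at the same level of detail as in your sketch.
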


\begin{rem}
The above theorem shows that the $KM$ linearized operator $\mathcal H_{KM}''$ has at least one \emph{embedded eigenvalue}. This is not true in the case of linear, real-valued operators with fast decaying potentials, but since  $\mathcal H_{KM}''$ is a matrix operator, this is perfectly possible. Additionally, a similar result for the Peregrine case could be proved, but the polynomial decay in space of the Peregrine breather makes this result more complicated to establish for the moment.
\end{rem}

\begin{rem}
Note that classical stable solitons or solitary waves $Q$ easily satisfy the estimate $\inf \sigma_c (\mathcal H_Q''[Q])>0$, where $\mathcal H_Q''$ is the standard quadratic form associated to the energy-mass or energy-momentum variational characterization of $Q$. Even in the cases of the mKdV breather $B_{mKdV}$ \cite{AM} or Sine-Gordon breather $B_{SG}$ \cite{AMP1}, one has the gap  $\inf \sigma_c (\mathcal H_{B_{mKdV}}''[B_{mKdV}])>0$ and also $\inf \sigma_c (\mathcal H_{B_{SG}}''[B_{SG}])>0$. The $KM$ breather does not follow this property at all, another consequence of the modulational instability present in the NLS equation with nonzero boundary value at infinity.
\end{rem}

\begin{rem}
This result is in concordance with the fact that the KM breathers are unstable, as shown in \cite{Munoz2}. 
\end{rem}

\subsection*{Organization of this paper} This paper is organized as follows. In Section \ref{Sect:2} we establish some preliminary results needed for the proof of Theorems \ref{TH1} and \ref{TH1a}. Section \ref{Sect:3} deals with the proof of Theorem \ref{TH1a}, needed for the proof of Theorem \ref{TH1}. Section \ref{Sect:4} is devoted to the proof of Theorem \ref{TH1}. In Section \ref{Sect:5} we prove Theorem \ref{TH2}. 
Section \ref{Sect:7} is concerned with the proof of Theorem \ref{TH4}. Finally, Section \ref{Sect:8} deals with Theorem \ref{TH5}.


\subsection*{Acknowledgments} We would like to thank the Applied Mathematics Department of the University of Granada, the IMUS at University of Sevilla, Spain; 
the Departamento de Ingenier\'ia Matem\'atica (DIM) of U. Chile, and the Mathematics Department La Sapienza U., in Roma, Italy, 
where part of this work was done. We also thank the referees for their deep and careful reading of our manuscript, that helped to improve a previous version of this work.

\bigskip

\section{Preliminaries}\label{Sect:2}

The purpose of this section is to gather several results present in the literature, needed below. We first present a result for the Sasa-Satsuma breather.

\subsection{Non variational PDE in the SS case} The following results are essentially contained in \cite{Peli_Yang}. From \eqref{R} and \eqref{SS}, it is not difficult to see that the soliton profile $Q_\beta$ satisfies the ODE
\[
Q_\beta''' +3i\alpha Q_\beta'' +6 |Q_\beta|^2 Q_\beta' +6i\alpha |Q_\beta|^2 Q_\beta + 3Q_\beta(|Q_\beta|^2)' -\beta^2 Q_\beta' -3i\alpha\beta^2 Q_\beta=0.
\]
This equation can be rewritten as
\bel{edoQ2}
Q_\beta''' +9 Q_\beta \bar Q_\beta Q_\beta'  + 3Q_\beta^2 \bar Q_\beta'   -\beta^2 Q_\beta'  +3i\alpha \Big( Q_\beta'' -\beta^2 Q_\beta +2 Q_\beta ^2 \bar Q_\beta  \Big)=0.
\ee
Note that this is a third order equation, and it seems that it cannot be integrated one more time. This exact equation will be used to prove \eqref{perEcBp}. 

\begin{rem}\label{embedded}
Note that the term \emph{embedded soliton} comes from \eqref{edoQ2}. Unlike the standard NLS ODE $Q''-Q+Q^3=0$, \eqref{edoQ2}  in its linear form $Q_\beta'''    -\beta^2 Q_\beta'  +3i\alpha \Big( Q_\beta'' -\beta^2 Q_\beta   \Big)=0$ has ``continuous spectrum'' solutions of the form $e^{iax}$, $a\in\R$; see \cite{Peli_Yang} for more details about this concept.
\end{rem}

\subsection{Conserved quantities}\label{Energies} In this subsection we consider the conserved quantities needed for the proof of Theorem \ref{TH1} and the definition of $\mathcal H_X$ in \eqref{H}. In what follows, we adopt the subscript $X\in\{SS,SY,KM,P\}$ to denote the conservation laws needed according to the respective breather $B_X$.

\medskip

\noindent
{\it Sasa-Satsuma.} Recall the Sasa-Satsuma equation \eqref{SS}. The following quantities are invariant of the motion, on sufficiently regular solutions: the mass
\begin{equation}\label{M}
M_{SS}[u]:=\int |u|^2dx,
\end{equation}
the energy
\bel{E}
E_{SS}[u]:=\int \Big(|u_x|^2 - 2|u|^4\Big)dx ,
\end{equation}
and the $H^2$ based energy
\bel{F}
F_{SS}[u]:=\int \Big(|u_{xx}|^2 - 8|u|^2|u_x|^2 -3((|u|^2)_x)^2 +8|u|^6\Big)dx.
\end{equation}
For complement purposes, one has
\be\label{massBreSS}
M_{SS}[B_{SS}]=2  \beta,
\ee
and
\[
E_{SS}[B_{SS}]=-\frac23 +4m^2+2m^2 \sqrt{1+m^2} \log\left(\frac{2+m^2-2\sqrt{1+m^2}}{m^2}\right), \qquad m:=\frac{\alpha}{\beta}.
\]
These two identities are easily checked using e.g. Mathematica. For the mass $M_{SS}[B_{SS}]$, since up to translation and phase factor,
$B_{SS}(x) = \beta Q(\beta x),$ one can easily compute that

\be\label{massSolSS}
\int_{\R}|Q(x)|^2 dx = 2.
\ee
\medskip

\noindent
{\it Satsuma-Yajima.} It is known that the NLS \eqref{NLS} with zero boundary condition at infinity possesses the following formally conserved quantities: the classical mass
\be\label{Mass_NLS0}
M_{SY}[u]: =\int |u|^2,
\ee
and the focusing energy
\be\label{Energy_NLS0}
E_{SY}[u]:=\int |u_x|^2 - \frac12 \int |u|^4.
\ee
The additional $H^2$ based energy is given by the expression
\be\label{F_NLS0}
F_{SY}[u]:= \int \Big(|u_{xx}|^2 -3 |u|^2|u_x|^2 -2(\re (\bar u u_x))^2  + \frac12 |u|^6 \Big).
\ee

\noindent
{\it Peregrine and Kuznetsov-Ma.} For simplicity in the computations, it is convenient to write \eqref{NLS} for $w$ in terms of the function $u$ in \eqref{u_w}. With this choice, both for $X=KM$ and $P$, one has the mass
\be\label{Mass_NLS}
M_{X}[u]: =\int (|u|^2 -1),
\ee
the energy
\be\label{Energy_NLS}
E_{X}[u]:=\int |u_x|^2 - \frac12 \int (|u|^2-1)^2,
\ee
and the Stokes wave + $H^2$ perturbations conserved energy:
\be\label{F_NLS}
F_{X}[u]:= \int \Big(|u_{xx}|^2 -3 (|u|^2-1)|u_x|^2 -\frac12((|u|^2)_x)^2  + \frac12 (|u|^2-1)^3 \Big). 
\ee

\begin{rem}\label{CL}
In \cite{Munoz2}, it was computed the mass and energy \eqref{Mass_NLS}-\eqref{Energy_NLS} of the Peregrine \eqref{P} and Kuznetsov-Ma \eqref{KM} breathers. Indeed, one has
\[
M_P[B_P]= E_P[B_P]=0,
\]
(however, the $L^2$-norm of $B_P(t)$ is never zero, but converges to zero as $t\to +\infty$), and
\[
M_{KM}[B_{KM}] = 4\beta, \quad E_{KM}[B_{KM}] =-\frac 83\bt^3.
\]
Note that $P$ has same energy and mass as the Stokes wave solution (the nonzero background), a property not satisfied by the standard soliton on zero background. Also, compare the mass and energy of the Kuznetsov-Ma breather with the ones obtained in \cite{AM} for the mKdV breather.
\end{rem}

\begin{rem}[Momentum laws]\label{CLmom}
{\color{black} Another important conserved quantity here is the Momentum
\be\label{Momentum_0}
P_X[u]:= \ima\int \bar u u_x,
\ee
valid in the $X=SS,SY$ cases, and 
\be\label{Momentum_1}
P_X[u]:= \ima\int  (\bar u-e^{-it}) u_x,
\ee
for the $X=P,KM$ cases. Note that both quantities are well-defined 
and finite in the case of a breather $B_X$, and essentially measure the speed of each breather. It is not difficult to show (or using a symbolic computing software) that 
\be\label{Momentum_2}
 P_{SS}[B_{SS}]=-\al\sqrt{\al^2+\bt^2}\log\Big(\frac{1}{\al^2}\big(2\bt^2+\al^2+2\bt\sqrt{\al^2+\bt^2}\big)\Big), 
\ee
and
\be\label{Momentum_3}
P_{SY}[B_{SY}]=  P_{P}[B_{P}]= P_{KM}[B_{KM}]=0.
\ee
We can then conclude that, except for $SS$ breathers, which have nonzero momentum, $SY$, $KM$ and $P$ breathers are zero speed solutions. This is in concordance with the characterization of periodic in time breathers, for which 
\[
\frac{d}{dt} M_{SY}[u] = const. P_{SY}[u].
\]
Therefore, breathers must have zero momentum. See  \cite{MEM} for another point of view about this fact. Note instead that, under a suitable Galilean transformation, they must have nonzero momentum.
}
\end{rem}

\bigskip

\section{Higher energy expansions: Proof of Theorem \ref{TH1a}}\label{Sect:3}

\medskip

This section is devoted to the proof of Theorem \ref{TH1a}. In what follows, we consider real-valued parameters $m_X,n_X$, for each $X\in \{SS,SY,KM,P\}$ as follows:
\smallskip
\ben
\item For $X=SS$, one has $m_X=- 2(\bt^2-\al^2) $ and $n_X=(\al^2+\bt^2)^2$ (see \eqref{R}).
\smallskip
\item For $X=SY$, one has $m_X=(c_2^2+c_1^2)$ and $n_X=c_2^2c_1^2$. 
\smallskip
\item For $X=KM$, one has $m_X=+ \beta^2$ and $n_X=0$ (see \eqref{KM}).
\smallskip
\item For $X=P$, one has $m_X=n_X=0$ (see \eqref{P}).
\een
These are the parameters previously mentioned in Theorem \ref{TH1a}, item (4).

\medskip

Consider the Lyapunov functional $\mathcal H_X$ defined by
\[
\mathcal H_X[u] = F_X[u] + m_X E_X[u] + n_X M_X[u], 
\]
where $F_X$, $E_X$ and $M_X$ were introduced in Subsection \ref{Energies}. This is exactly the functional considered in Theorem \ref{TH1a}, and more specifically, \eqref{H}. Note that this functional is a linear combination of conserved quantities mass \eqref{M}-\eqref{Mass_NLS}, energy \eqref{E}-\eqref{Energy_NLS}, and the second energy in $F_X$ \eqref{F}-\eqref{F_NLS}.

\medskip

Consequently, items (1)-(4) in Theorem \ref{TH1a} are easily proved.

\medskip

It remains to prove item (5) in Theorem \ref{TH1a}, and the fact that breathers $B_X$ are critical points for $\mathcal H_X$. These last facts will be a consequence of the following Proposition, and Theorem \ref{TH1}.


\begin{prop}[Variational characterization of $SS, SY, KM$ and $P$ breathers]\label{Decomposition_Prop}
For each $X\in\{SS,SY,KM,P\}$, and for each $z\in H^2(\R)$, we have 
\be\label{decomp}
\mathcal H_{X}[ B_X + z]=  \mathcal H_{X}[ B_X] + \mathcal G_X[z] + \mathcal Q_X[z] + \mathcal N_X[z],
\ee
where
\begin{itemize}
\item $\mathcal H_{X}[ B_X] $ does not depend on time. Moreover,
\[
 \mathcal H_{X}[ B_P]=0.
\]
\item The linear term in $z$ is given as 
\be\label{lineal_lineal}
\mathcal G_X[z] = 2\re\int \bar{z} G[B_X],
\ee
with ($B=B_X$)
\be\label{perEcBp_0}
\begin{aligned}
 G[B_{SS}] :=& ~ B_{(4x)} +8B_x^2 \bar B +14|B|^2 B_{xx}+6B^2 \bar B_{xx}  +12  |B_x|^2 B + 24 |B|^4 B \\
& - 2(\bt^2-\al^2) (B_{xx} + 4 |B|^2 B) +(\al^2+\bt^2)^2 B;
\end{aligned}
\ee
\smallskip
\be\label{Ec_SY_0}
\begin{aligned}
  G[B_{SY}] := & ~B_{(4x)} + 3B_x^2 \bar B + 4 |B|^2B_{xx} + 2|B_x|^2 B + B^2 \bar B_{xx}   + \frac32 |B|^4B \\
&  - (c_2^2+c_1^2)(B_{xx} +  |B|^2 B) + c_2^2c_1^2 B ;
\end{aligned}
\ee
\smallskip
\be\label{Ec_KM_0}
\begin{aligned}
 G[B_{KM}] := & ~ B_{(4x)} + 3B_x^2 \bar B +(4 |B|^2-3) B_{xx} + 2  |B_x|^2 B + B^2 \bar B_{xx}   \\
&  + \frac32 (|B|^2-1)^2 B - \beta^2(B_{xx} +  (|B|^2-1) B) ;
\end{aligned}
\ee
and
\be\label{Ec_P_0}
\begin{aligned}
 G[B_{P}] := & ~ B_{(4x)} + 3B_x^2 \bar B +(4 |B|^2-3) B_{xx} + 2  |B_x|^2 B+ B^2 \bar B_{xx}  \\
& + \frac32 (|B|^2-1)^2 B .
\end{aligned}
\ee
\item The quadratic functional is given as 
\be\label{quad_quad}
\mathcal Q_{X}[z] := \re \int\bar{z}\mathcal{L}_{X}[z]dx
\ee
where 
\be\label{perEcBp_1}
\begin{aligned}
 \mathcal{L}_{SS}[z]:=&~ z_{4x} + 14|B|^2 z_{xx} +  6B^2\bar{z}_{xx} + (12B\bar{B}_x + 16\bar{B}B_{x})z_x + 12 BB_x\bar{z}_x\\
 &+\big(14\bar{B}B_{xx}+12 |B_x|^2 + 12B\bar{B}_{xx}+72|B|^4 \big)z\\
 &+ \big(14BB_{xx} + 8B_x^2 + 48|B|^2B^2 \big)\bar{z} -m_{SS}( z_{xx} +8|B|^2 z + 4B^2 \bar z) +n_{SS}z,\\ 
\end{aligned}
\ee
\smallskip
\be\label{Ec_SY_1}
\begin{aligned}
 \mathcal{L}_{SY}[z]:= &~ z_{4x} + 4|B|^2z_{xx} + B^2\bar{z}_{xx} + (2B\bar{B}_x+6\bar{B}B_x   ) z_x + 2BB_x\bar{z}_x \\
&  + \left(  2|B_x|^2 + 2B\bar{B}_{xx}      +4\bar{B}B_{xx} + \frac92|B|^4 \right)z + \left(3 B_x^2 + 4BB_{xx} + 3|B|^2B^2 \right)\bar{z} \\ 
&   - m_{SY}[z_{xx} +B^2\bar{z} + 2|B|^2z] + n_{SY} z,
\end{aligned}
\ee
\smallskip
\be\label{Ec_KM_1}
\begin{aligned}
  \mathcal{L}_{KM}[z]:= &~  z_{4x} +(4|B|^2-3)z_{xx} +B^2\bar{z}_{xx} + (2B\bar{B}_x  +  6 \bar BB_x) z_x +2 BB_x \bar z_x \\
& ~{}   + \Big(2 |B_x|^2+  2B\bar B_{xx} + 4\bar B B_{xx}\Big)z  +( 3B_x^2 +  4BB_{xx}) \bar{z} \\
& ~{}   + \frac32(|B|^2-1)^2 z + 6(|B|^2-1)B\re (B\bar z)   \\
&~{}- m_{KM}[z_{xx} +B^2\bar{z} + (2|B|^2-1)z],
\end{aligned}
\ee
\smallskip
and
\be\label{Ec_P_1}
\begin{aligned}
  \mathcal{L}_{P}[z]:=&~  z_{4x} +(4|B|^2-3)z_{xx} +B^2\bar{z}_{xx} + (2B\bar{B}_x  +  6 \bar BB_x) z_x +2 BB_x \bar z_x \\
& ~{}   + \Big(2 |B_x|^2+  2B\bar B_{xx} + 4\bar B B_{xx}\Big)z  +( 3B_x^2 +  4BB_{xx}) \bar{z} \\
& ~{}   + \frac32(|B|^2-1)^2 z + 6(|B|^2-1)B\re (B\bar z) .
\end{aligned}
\ee
\item Finally, assuming $\|z\|_{H^1}$ small enough, we have the nonlinear estimate
\be\label{Nonlinear}
|\mathcal N_X[z]|\lesssim \|z\|_{H^1}^3.
\ee
\end{itemize}
\end{prop}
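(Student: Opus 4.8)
The plan is to read \eqref{decomp} as a \emph{terminating} Taylor expansion of the polynomial functional $\mathcal H_X$ about $u=B_X$. Since each of $M_X$, $E_X$, $F_X$ is the integral of a polynomial in $u,\bar u$ and their first and second derivatives, substituting $u=B_X+z$ and expanding multilinearly produces only finitely many monomials, which I would sort by their total degree in the $z$-factors $\{z,\bar z,z_x,\bar z_x,z_{xx},\bar z_{xx}\}$. The degree-zero part is $\mathcal H_X[B_X]$; its time-independence is immediate from the fact that $\mathcal H_X$ is a fixed linear combination of the conserved quantities of Subsection \ref{Energies} evaluated along the exact solution $B_X(t)$. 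The vanishing $\mathcal H_P[B_P]=0$ follows from $m_P=n_P=0$ together with the explicit identity $F_P[B_P]=0$, checked directly on the Peregrine profile \eqref{P}, in the spirit of Remark \ref{CL} for $M_P,E_P$.

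For the linear and quadratic parts I would collect the degree-one and degree-two monomials coming from each of $M_X,E_X,F_X$, weight them by $1,m_X,n_X$, and then integrate by parts to move every derivative off the $z$-factors onto the $B_X$-coefficients. After this bookkeeping the degree-one part takes the form $2\re\int \bar z\,G[B_X]$, where $G[B_X]$ is exactly the Euler--Lagrange expression (the gradient of $\mathcal H_X$ with respect to $\bar u$) recorded in \eqref{perEcBp_0}--\eqref{Ec_P_0}; likewise the degree-two part becomes $\re\int \bar z\,\mathcal L_X[z]$, with $\mathcal L_X$ the Hessian operator displayed in \eqref{perEcBp_1}--\eqref{Ec_P_1}. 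Particular care is needed with the complex structure: terms such as $B^2\bar z$, $B^2\bar z_{xx}$ and $6(|B|^2-1)B\,\re(B\bar z)$ arise because $|u|^2$, $|u|^4$ and $(\re(\bar u u_x))^2$ are not holomorphic in $u$, so the contributions of $z$ and of $\bar z$ must be kept separate. In the $KM$ and $P$ cases one must also verify that each integration by parts leaves no boundary term; this holds because $z\in H^2(\R)$ while $B_X-e^{it}$, $B_{X,x}$, $B_{X,xx}$ decay (Schwartz for $KM$, polynomially for $P$), which is also what makes the integrals convergent once the background is subtracted as in \eqref{Mass_NLS}--\eqref{F_NLS}.

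The remainder $\mathcal N_X[z]$ gathers all monomials of degree $\ge 3$ in the $z$-factors. A key structural observation is that the only quartic- and sextic-in-$u$ terms, namely $|u|^4$, $|u|^6$, $|u|^2|u_x|^2$ and $((|u|^2)_x)^2$, involve at most first derivatives of $u$ and are at most quadratic in $u_x$; consequently every monomial in $\mathcal N_X$ contains no factor of $z_{xx}$ and at most two factors from $\{z_x,\bar z_x\}$, the second-derivative term $|u_{xx}|^2$ contributing only to the constant, linear and quadratic parts. Each such monomial can then be estimated by placing the leftover $z$-factor in $L^\infty$, using the one-dimensional embedding $\|z\|_{L^\infty}\lesssim\|z\|_{H^1}$ and the boundedness of the $B_X$-coefficients; a representative cubic term obeys $\int |B|\,|z|\,|z_x|^2 \lesssim \|z\|_{H^1}^3$. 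For $\|z\|_{H^1}$ small, the higher-degree ($\ge 4$) monomials are absorbed into the same bound, yielding \eqref{Nonlinear}.

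The main obstacle I anticipate is not conceptual but the volume and precision of the algebra: matching the collected degree-one and degree-two terms, after many integrations by parts, to the exact coefficients appearing in $G[B_X]$ and $\mathcal L_X[z]$ is lengthy and error-prone, most acutely in the $SS$ case where $F_{SS}$ carries the largest numerical coefficients, and in the $KM$ and $P$ cases where the $\re(B\bar z)$ splitting must be propagated consistently. I would therefore organize the computation functional-by-functional (first $M_X$, then $E_X$, then $F_X$), symmetrize each quadratic form before comparison, and cross-check the resulting operators with symbolic computation.
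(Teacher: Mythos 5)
Your proposal is correct and takes essentially the same route as the paper's proof: a direct multilinear expansion of $M_X$, $E_X$, $F_X$ at $u=B_X+z$, sorting terms by degree in $z$, integrating by parts to reach the forms $2\re\int\bar z\, G[B_X]$ and $\re\int\bar z\,\mathcal L_X[z]$ (keeping $z$ and $\bar z$ contributions separate, exactly as the paper does), and bounding the explicitly collected remainder --- which, as you observe, contains no $z_{xx}$ factor and at most two first-derivative factors --- via $H^1\hookrightarrow L^\infty$. The only cosmetic difference is the evaluation $\mathcal H_P[B_P]=0$: the paper deduces $F_P[B_P]=0$ from the conservation of $F_P$ together with $B_P\to e^{it}$ as $t\to+\infty$ and $F_P[e^{it}]=0$, which is slightly cleaner than your proposed direct computation on the Peregrine profile, though both work.
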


\begin{rem}
Note that terms \eqref{perEcBp_0}-\eqref{Ec_P_0} precisely correspond to the nonlinear elliptic equations presented in Theorem \ref{TH1}; in that sense, once Theorem \ref{TH1} is proved, Theorem \ref{TH1a} is also completely proved.
\end{rem}

\begin{rem}
All linearized operators appearing from \eqref{quad_quad} contain terms in $z$ and $\bar z$. Consequently, these are $2\times 2$ matrix valued operators with fourth order components each, more demanding that the ones found in \cite{AMP1} for the Sine-Gordon case, which was composed by fourth and second order mixed terms only. 
\end{rem}

\begin{proof}[Proof of Proposition \ref{Decomposition_Prop}]
We proceed following standard steps. We will  prove \eqref{decomp} decomposing $\mathcal H_{X}[ B_X + z]$ into zeroth, first (linear in $z$), second (quadratic in $z$) and higher order terms (cubic or higher in $z$). The convention that we will use below is the following:
\begin{itemize}
\item Zeroth order terms will have the subscript ``0''. 
\item First order terms will have the subscript \emph{lin}.
\item Second order terms will have the subscript \emph{quad}.
\item Higher order terms will have the subscript \emph{non}.
\end{itemize}

\medskip

\noindent
{\it Step 1. Contribution of the mass terms.} Recall the masses \eqref{M}, \eqref{Mass_NLS0} and \eqref{Mass_NLS}. We have for $X=SS,SY$ and $B=B_X$, 
\[
M_{X}[B+z] = \int |B+z|^2 = \int |B|^2 + 2\re \int B\bar z + \int |z|^2.
\]
Similarly, for $X=KM, P$,
\[
\begin{aligned}
M_{X}[B+z] =&~{}  \int |B+z|^2 -1\\
=&~{} \int (|B|^2-1) + 2\re \int B\bar z + \int |z|^2.
\end{aligned}
\]
The linear and quadratic contributions here are the same for both equations. Therefore, if $X=SS,SY, KM, P,$
\be\label{M_Lineal}
\begin{aligned}
M_{X,0} :=& ~{}M_{X}[B],\qquad
M_{X,lin} := 2\re \int B\bar z\\
&~{} \text{and}\qquad M_{X,quad} := \int |z|^2.
\end{aligned}
\ee
Note that $M_{KM,lin} $ and $M_{P,lin} $ may not be necessarily well-defined, without adding cancelling terms (see below). 
As for the mass terms, there are no higher order contributions to the expansion of $\mathcal H_{X}[ B_X + z]$:
\be\label{M_X_non}
M_{SS,non} =M_{SY,non}=M_{KM,non}=M_{P,non}=0.
\ee

\medskip

\noindent
{\it Step 2. Contribution of the energy terms.} Recall the energies \eqref{E} and \eqref{Energy_NLS0}. If $X=SS$ and $B=B_X,$
\[
\begin{aligned}
E_{SS}[B+z] =&~  \int |B_x+z_x|^2 -2\int |B+z|^4 \\
=&~{} \int |B_x|^2 + 2\re \int B_x\bar z_x + \int |z_x|^2 - 2\int \left(|B|^2 + 2\re (B\bar z) +  |z|^2 \right)^2.
\end{aligned}
\]
Therefore, we have
\[
\begin{aligned}
E_{SS}[B+z] =&~ {} E_{SS}[B]  + 2\re \int \bar z (-B_{xx} ) + \int |z_x|^2 \\
&~ {} - 2\int \left( (2\re (B\bar z))^2 +  |z|^4 + 4|B|^2\re (B\bar z) + 2|B|^2|z|^2 +4|z|^2 \re(B\bar z) \right).
\end{aligned}
\]
Clearly $E_{SS,0}=E_{SS}[B]$. The linear contribution here is
\be\label{E_SS_Lineal}
E_{SS,lin}:=2\re \int \bar z (-B_{xx} -4 |B|^2 B),
\ee
and the quadratic contribution  is
\be\label{E_SS_Quad}
E_{SS,quad}=\int |z_x|^2  - 2\int \left( 2(\re (B\bar z))^2 + 2|B|^2|z|^2 \right).
\ee
Finally, the higher order contribution is given by
\be\label{E_SS_non}
E_{SS,non}= - 2\int \left( |z|^4  +4|z|^2 \re(B\bar z) \right).
\ee
Now, consider the energy in the Satsuma-Yajima (SY) case \eqref{Energy_NLS0}. If $X=SY$ and $B=B_X$,
\[
\begin{aligned}
E_{SY}[B+z] =&~ {} E_{SY}[B]  + 2\re \int \bar z (-B_{xx} ) + \int |z_x|^2 \\
&~ {} - \frac12\int \left( 4(\re (B\bar z))^2 +  |z|^4 + 4|B|^2\re (B\bar z) + 2|B|^2|z|^2 +4|z|^2 \re(B\bar z) \right),
\end{aligned}
\]
so that $E_{SY,0}:=E_{SY}[B]$, and the linear  contribution is
\be\label{E_LinealSY}
E_{SY,lin}:=2\re \int \bar z \left(-B_{xx} - |B|^2 B\right).
\ee
and the quadratic contribution  is given by
\be\label{E_SY_Quad}
E_{SY,quad}:=\int  |z_x|^2  - \frac12\int \left( (2\re (B\bar z))^2+ 2|B|^2|z|^2\right).
\ee
Finally, the higher order contributions are
\be\label{E_SY_Non}
E_{SY,non}:=- \frac12\int \left(  |z|^4 +4|z|^2 \re(B\bar z) \right).
\ee
Consider now the NLS case. The energy is given by \eqref{Energy_NLS}, and if $X=KM$ or $P$, and $B=B_X$, we have
\[
\begin{aligned}
E_{X}[B+z] =&~  \int |B_x+z_x|^2 - \frac12\int (|B+z|^2-1)^2 \\
=&~{} \int |B_x|^2 + 2\re \int B_x\bar z_x + \int |z_x|^2 - \frac12\int \left(|B|^2-1 + 2\re (B\bar z) +  |z|^2 \right)^2.
\end{aligned}
\]
Therefore, we have
\[
\begin{aligned}
& E_{X}[B+z] \\
&~ {}= E_{X}[B]  + 2\re \int \bar z (-B_{xx} ) + \int |z_x|^2 \\
&~ {} \quad - \frac12\int \left( (2\re (B\bar z))^2 +  |z|^4 + 4(|B|^2-1)\re (B\bar z) + 2(|B|^2-1)|z|^2 +4|z|^2 \re(B\bar z) \right).
\end{aligned}
\]
Consequently, $E_{X,0}:=E_{X}[B]$. The linear contribution here is
\be\label{E_NLS_Lineal}
E_{X,lin}:=2\re \int \bar z (-B_{xx} - (|B|^2-1) B),
\ee
and the quadratic contribution  is
\be\label{E_NLS_Quad}
E_{X,quad}:=\int |z_x|^2  - \frac12 \int \left( (2\re (B\bar z))^2+ 2(|B|^2-1)|z|^2\right).
\ee
Finally, the higher order contribution is
\be\label{E_NLS_Non}
E_{X,non}:=- \frac12\int \left(   |z|^4  +4|z|^2 \re(B\bar z) \right).
\ee

\bigskip

\noindent
{\it Step 3. Contribution of the second energy terms. The SS case.} We start by considering the case $X=SS$. Note that from \eqref{F},
\be\label{F_SS_deco}
\begin{aligned}
F_{SS}[B+z ]=&~{} \int \Big(|B_{xx}+ z_{xx}|^2 - 8|B+z|^2|B_x+z_x|^2 -3((|B+z|^2)_x)^2 +8|B+z|^6\Big)\\
=&~ {}\underbrace{\int \Big(|B_{xx}|^2 +|z_{xx}|^2 + 2\re (B_{xx}\bar z_{xx})\Big)}_{F_{SS,1}} \\
&~ {} \underbrace{-8\int \Big(|B|^2 + |z|^2 +2\re (B\bar z) \Big) \Big(|B_x|^2 +|z_x|^2 + 2\re(B_x \bar z_x) \Big)}_{F_{SS,2}} \\
&~ {} \underbrace{-3 \int \Big((B_x+z_x)(\bar B + \bar z) +(B+z)(\bar B_x + \bar z_x)\Big)^2}_{F_{SS,3}} \\
&~ {}\underbrace{+ 8\int \Big(|B|^2 + 2\re (B\bar z) +  |z|^2\Big)^3}_{F_{SS,4}}
\end{aligned}
\ee
We have
\[
F_{SS,1}=\int (|B_{xx}|^2 +|z_{xx}|^2) + 2\re \int \bar z B_{xxxx},
\]
hence $F_{SS,1,0} = \int |B_{xx}|^2$,
\be\label{F_SS_1_lin}
F_{SS,1,lin} =  2\re \int \bar z B_{xxxx},
\ee
and
\be\label{F_SS_1_quad}
F_{SS,1,quad} = \int |z_{xx}|^2.
\ee
Clearly 
\be\label{F_SS_1_non}
F_{SS,1,non}=0.
\ee
Analogously,
\be\label{F_SS_2}
F_{SS,2} =-8\int \Big(|B|^2 + |z|^2 +2\re (B\bar z) \Big) \Big(|B_x|^2 +|z_x|^2 + 2\re(B_x \bar z_x) \Big).
\ee
We have $F_{SS,2,0}=-8\int  |B|^2|B_x|^2$. The linear terms are
\be\label{F_SS_2_lin}
\begin{aligned}
F_{SS,2,lin} = &~{} -8\int \Big(2|B|^2\re(B_x \bar z_x)   +2|B_x|^2 \re (B\bar z) \Big) \\
 = &~{} -16 \re \int \bar z \Big(-(|B|^2 B_x )_x    + |B_x|^2  B \Big) = 16\re \int \bar z (B_x^2 \bar B + |B|^2 B_{xx}),
 \end{aligned}
\ee
and the quadratic terms, taken from \eqref{F_SS_2}, are
\[
\begin{aligned}
F_{SS,2,quad} = &~{} -8\int \Big(|B|^2|z_x|^2 + |B_x|^2|z|^2 +   B \bar z  B_x \bar z_x + B \bar z  \bar B_x z_x + \bar B z  B_x \bar z_x + \bar B z  \bar B_x  z_x  \Big) \\
= &~{} -8\re \int \Big(|B|^2|z_x|^2 + |B_x|^2|z|^2\Big)  -8 \int \Big(  B \bar z  \bar B_x z_x + \bar B z  B_x \bar z_x \Big) \\
&~{} + 4 \int \Big( (B   B_x)_x  \bar z^2 + (\bar B   \bar B_x)_x  z^2\Big) \\
= &~{} -8\re \int \Big(|B|^2 z_x \bar z_x + |B_x|^2 z\bar z -  ( \bar B  B_x  z)_x\bar  z + B\bar B_x z_x\bar z - (B   B_x)_x  \bar z^2\Big)\\
= &~{} 8\re \int  \bar z \Big( ( B \bar B z_x)_x  - |B_x|^2 z  +  ( \bar B  B_x  z)_x - B\bar B_x z_x  + (B   B_x)_x  \bar z \Big).
 \end{aligned}
\]
Simplifying,
\be\label{F_SS_2_quad}
\begin{aligned}
F_{SS,2,quad} = &~{} 2\re \int \bar z \Big( 4|B|^2z_{xx} +  8 \bar BB_x z_x+ 4\bar BB_{xx} z + (4 B_x^2 + 4 BB_{xx}) \bar{z}  \Big).
 \end{aligned}
\ee
Finally, the higher order terms are
\be\label{F_SS_2_non}
\begin{aligned}
F_{SS,2,non} = &~{} -8\int \Big( |z|^2 +2\re (B\bar z) \Big) |z_x|^2  -16 \int  |z|^2 \re(B_x \bar z_x)  \\
= &~{}  -8\int \Big( |z_x|^2  |z|^2+2 |z_x|^2\re (B\bar z)  -2  |z|^2 \re(B_x \bar z_x)\Big) .
 \end{aligned}
\ee
Now, we deal with $F_{SS,3}$:
\be\label{F_SS_3}
\begin{aligned}
F_{SS,3}=& ~ {}-3 \int \Big((B_x+z_x)(\bar B + \bar z) +(B+z)(\bar B_x + \bar z_x)\Big)^2\\
=& ~ {}-3 \int \Big( B_x \bar B + B_x \bar z + z_x \bar B + z_x \bar z  + \bar B_x  B + \bar B_x z + \bar z_x B + \bar z_x  z \Big)^2.
\end{aligned} 
\ee
The linear terms are
\[
\begin{aligned}
F_{SS,3,lin} =  & ~ {} -3 \int  B_x \bar B \Big( B_x \bar z + z_x \bar B + \bar B_x z + \bar z_x B \Big)\\
 & ~ {} -3 \int \Big( B_x \bar z + z_x \bar B + \bar B_x z + \bar z_x B  \Big)\Big(B_x \bar B  + \bar B_x  B  \Big)\\ 
 & ~ {} -3 \int  \bar B_x  B  \Big( B_x \bar z + z_x \bar B + \bar B_x z + \bar z_x B \Big) \\
=& ~ {} -6\re \int  B_x \bar B \Big( B_x \bar z + z_x \bar B + \bar B_x z + \bar z_x B \Big)\\
 & ~ {} -12  \re \int \Big( B_x \bar z + z_x \bar B \Big) \re( B_x \bar B) .
\end{aligned}
\]
Therefore,
\[
\begin{aligned}
F_{SS,3,lin} =& ~ {} -6\re \int  B_x \bar B \Big( B_x \bar z + z_x \bar B + \bar B_x z + \bar z_x B \Big)\\
& ~{}  -12  \re \int \bar z \Big( B_x \re( B_x \bar B)  -  (\re( B_x \bar B)  B)_x \Big) \\
=& ~ {} -12\re \int  B_x \bar B \Big( \re(B_x \bar z) + \re( \bar z_x B) \Big) \\
& ~{} -12  \re \int \bar z \Big( B_x \re( B_x \bar B)  -  (\re( B_x \bar B)  B)_x \Big) \\
=& ~ {} -12\re \int  \re(B_x \bar B) \Big( B_x \bar z +  \bar z_x B \Big)\\
&~{} -12  \re \int \bar z \Big( B_x \re( B_x \bar B)  -  (\re( B_x \bar B)  B)_x \Big) .
\end{aligned} 
\]
Collecting similar terms, we get
\[
\begin{aligned}
F_{SS,3,lin} =& ~ {}-12\re \int  \bar z  \Big( \re(B_x \bar B)B_x - ( \re(B_x \bar B)B)_x +   \re( B_x \bar B)B_x -  (\re( B_x \bar B)  B)_x  \Big)\\
=& ~ {}-24\re \int  \bar z  \Big( \re(B_x \bar B)B_x - ( \re(B_x \bar B)B)_x  \Big)\\
 =&~ {} 24\re \int  \bar z  (\re(B_x \bar B))_xB \\
=&~ {} 12\re \int  \bar z  (B_{xx} \bar B +B_x \bar B_x + \bar B_{xx}  B +B_x \bar B_x ) B,
\end{aligned}
\]
so that
\be\label{F_SS_3_lin}
F_{SS,3,lin}=  12\re \int  \bar z  (|B|^2 B_{xx} + B^2 \bar B_{xx}  + 2 B|B_x|^2).
\ee
The quadratic terms, taken from \eqref{F_SS_3}, are
\[
\begin{aligned}
& F_{SS,3,quad}\\
& = ~{}-3 \int   \Big(B_x^2\bar{z}^2 + \bar{B}_x^2z^2 + \bar{B}^2z_x^2 + B^2\bar{z}_x^2 + 4\bar{B}B_xz_x\bar{z} + 4B\bar{B}_xz\bar{z}_x\\
& \qquad\qquad   \qquad + 2|B_x|^2|z|^2 + 2|B|^2|z_x|^2 + 2 (2\re (B\bar z))(2\re (B_x\bar{z}_x))\Big) \\
& = ~{} -6\re\int \Big( B_x^2\bar{z}^2 + B^2\bar{z}_x^2  +2 B \bar B_x z\bar z_x+ 2 \bar B B_x z_x \bar z + |B_x|^2|z|^2 + |B|^2|z_x|^2 \Big) \\
& \qquad-6 \int \Big( B\bar z B_x\bar z_x + B\bar z \bar B_x z_x + \bar B z B_x\bar z_x + \bar B z \bar B_xz_x \Big)	\\
& = ~{} -6\re\int \Big( B_x^2\bar{z}^2 + B^2\bar{z}_x^2  -2( B \bar B_x z)_x\bar z + 2 \bar B B_x z_x \bar z + |B_x|^2|z|^2 + |B|^2|z_x|^2 \Big) \\
& \qquad +3 \int \Big( (B B_x)_x \bar z^2  + (\bar B  \bar B_x)_x z^2  \Big)- 6\re \int  \bar B B_x z \bar z_x - 6\re \int  B \bar  B_x   z_x \bar z	\\
& = ~{} 6\re\int \bar{z} \Big( -B_x^2\bar{z}  + (B^2\bar{z}_x)_x  + 2( |B_x|^2 z +B \bar B_{xx} z +B \bar B_x z_x ) - 2 \bar B B_x z_x   \Big) \\
& \qquad +6\re \int \bar z \Big(- |B_x|^2 z  + (|B|^2 z_x)_x  + (B B_x)_x \bar z  +  (\bar B B_x)_x z + \bar B B_x z_x  -B \bar  B_x   z_x \Big).
\end{aligned}
\]
Consequently,
\be\label{F_SS_3_quad}
\begin{aligned}
 F_{SS,3,quad} & = ~{}  2\re \int  \bar z\Big(3 |B|^2z_{xx} +3 B^2\bar{z}_{xx} + 6B\bar{B}_x z_x + 6BB_x \bar z_x \\
 &~{} \qquad \qquad \qquad  + (6 |B_x|^2+ 6 B\bar B_{xx} + 3\bar B B_{xx})z + 3 B B_{xx} \bar z \Big). 
\end{aligned}
\ee
Finally,
\be\label{F_SS_3_non}
\begin{aligned}
F_{SS,3,non}=& ~ {}-3 \int \Big(  (z_x \bar z )^2  + ( \bar z_x  z)^2 +2B_x \bar z^2  z_x  +2B_x \bar z_x  |z|^2 +2  z_x^2 \bar B  \bar z +2  |z_x|^2 \bar B    z \\
& ~{} \qquad \qquad + 2 z_x |z|^2  \bar B_x  + 2 |z_x|^2 \bar z B + 2 |z_x|^2 |z|^2 +2  \bar B_x z^2  \bar z_x   +2 \bar z_x^2 B  z \Big).
\end{aligned}
\ee
As for $F_{SS,4}$, we have
\[
\begin{aligned}
F_{SS,4}=&~{}  8\int \Big(|B|^2 + 2\re (B\bar z) +  |z|^2\Big)^3\\
=&~{} 8\int \Big( |B|^4 + 4(\re (B\bar z))^2 +  |z|^4  +  4|B|^2 \re (B\bar z) + 2|B|^2  |z|^2 + 4|z|^2 \re (B\bar z)  \Big)\\
& \qquad\qquad \times \Big(|B|^2 + 2\re (B\bar z) +  |z|^2\Big). 
\end{aligned}
\]
Expanding terms, we have that the linear terms are given by
\be\label{F_SS_4_lin}
\begin{aligned}
F_{SS,4,lin} = &~ {} 8 \int \Big(2|B|^4\re (B\bar z) +4|B|^4\re (B\bar z) \Big) \\
=&~ {} 48\int   |B|^4 \re (B\bar z) = 48\re \int \bar z |B|^4 B.
\end{aligned}
\ee
On the other hand, the quadratic terms are given by
\be\label{F_SS_4_quad}
\begin{aligned}
F_{SS,4,quad} = & ~{} 8 \int \Big(3|B|^2B^2\bar{z}^2 + 3|B|^2\bar{B}^2z^2 + 9 |B|^4|z|^2 \Big) \\
= & ~{} 2 \re\int \Big(24|B|^2B^2\bar{z}^2 + 36 |B|^4|z|^2 \Big).
\end{aligned}
\ee
Finally,
\be\label{F_SS_4_non}
\begin{aligned}
F_{SS,4,non}=&~{} 8\int \Big(  4(\re (B\bar z))^2 ( 2\re (B\bar z) +  |z|^2) +  |z|^4(|B|^2 + 2\re (B\bar z) +  |z|^2) \\
& \qquad\quad +  4|B|^2 |z|^2 \re (B\bar z) + 2|B|^2  |z|^2 (2\re (B\bar z) +  |z|^2) \\
& \qquad\quad+ 4|z|^2 \re (B\bar z)(|B|^2 + 2\re (B\bar z) +  |z|^2)  \Big).
\end{aligned}
\ee

\medskip

\noindent
{\it Step 4. Gathering terms.}  We conclude from \eqref{F_SS_1_lin}, \eqref{F_SS_2_lin}, \eqref{F_SS_3_lin} and \eqref{F_SS_4_lin} that the linear part $F_{SS,lin}$ of $F_{SS}$ is given by
\[
\begin{aligned}
F_{SS,lin}:=&~  F_{SS,1,lin} +F_{SS,2,lin}+F_{SS,3,lin}+F_{SS,4,lin}\\
 = &~ 2\re\Big( \int \bar z B_{xxxx} + 8\int \bar z (B_x^2 \bar B + |B|^2 B_{xx}) \\
& \qquad \qquad + 6 \int  \bar z  (|B|^2 B_{xx} +B^2 \bar B_{xx}  + 2 B|B_x|^2) +24 \int \bar z |B|^4 B \Big)\\
 =&~ 2\re \int \bar z \left( B_{xxxx} +8B_x^2 \bar B +14|B|^2 B_{xx}+6B^2 \bar B_{xx}  +12 |B_x|^2B + 24 |B|^4 B \right) .
\end{aligned}
\]
On the other hand, collecting terms in \eqref{F_SS_1_quad}, \eqref{F_SS_2_quad}, \eqref{F_SS_3_quad} and \eqref{F_SS_4_quad}, the quadratic part of $F_{SS}$ is given by
\[
\begin{aligned}
&~{} F_{SS,quad}:=  F_{SS,1,quad} +F_{SS,2,quad}+F_{SS,3,quad}+F_{SS,4,quad}\\
& ~{}=  \int |z_{xx}|^2 \\
&~{} \quad + 2\re \int \bar z \Big(   4|B|^2z_{xx} +  8 \bar BB_x z_x+ 4\bar BB_{xx} z + (4 B_x^2 + 4 BB_{xx}) \bar z \Big)\\  
&~{}\quad   +2\re \int  \bar z\Big( 3 |B|^2z_{xx} +3 B^2\bar{z}_{xx} + 6B\bar{B}_x z_x + 6BB_x \bar z_x \\
 &~{} \qquad \qquad \qquad  + (6 |B_x|^2+ 6 B\bar B_{xx} + 3\bar B B_{xx})z + 3 B B_{xx} \bar z \Big)\\
&~{} \quad +2 \re\int \bar z \Big(24|B|^2B^2\bar{z} + 36 |B|^4 z \Big) \\
 &~  =2\re \int \bar z \Big(\frac12 z_{4x} + 7|B|^2 z_{xx} + 3B^2\bar{z}_{xx}  +(6 B\bar B_x   +8 \bar{B}B_x)z_x   +6 BB_x\bar{z}_x 
\\
& \qquad \qquad \qquad  +(  7\bar B B_{xx}  +  6 B\bar B_{xx} + 6|B_x|^2     + 36 |B|^4)z +  (7BB_{xx} + 4 B_x^2 + 24|B|^4B^2) \bar z  \Big).
\end{aligned}
\]
Finally, from \eqref{F_SS_1_non}, \eqref{F_SS_2_non}, \eqref{F_SS_3_non} and \eqref{F_SS_4_non}, we get
\be\label{F_SS_non}
\begin{aligned}
F_{SS,non}:= &~ {}F_{SS,1,non} +F_{SS,2,non}+F_{SS,3,non}+F_{SS,4,non}\\
=&~{}  -8\int \Big( |z_x|^2  |z|^2+2 |z_x|^2\re (B\bar z)  -2  |z|^2 \re(B_x \bar z_x)\Big)\\
&~{} -3 \int \Big(  (z_x \bar z )^2  + ( \bar z_x  z)^2 +2B_x \bar z^2  z_x  +2B_x \bar z_x  |z|^2 +2  z_x^2 \bar B  \bar z +2  |z_x|^2 \bar B    z \\
& ~{} \qquad \qquad + 2 z_x |z|^2  \bar B_x  + 2 |z_x|^2 \bar z B + 2 |z_x|^2 |z|^2 +2  \bar B_x z^2  \bar z_x   +2 \bar z_x^2 B  z \Big)\\
& ~{} + 8\int \Big(  4(\re (B\bar z))^2 ( 2\re (B\bar z) +  |z|^2) +  |z|^4(|B|^2 + 2\re (B\bar z) +  |z|^2) \\
& \qquad\qquad +  4|B|^2 |z|^2 \re (B\bar z) + 2|B|^2  |z|^2 (2\re (B\bar z) +  |z|^2) \\
& \qquad\qquad+ 4|z|^2 \re (B\bar z)(|B|^2 + 2\re (B\bar z) +  |z|^2)  \Big).
\end{aligned}
\ee
We can also collect higher order terms in the Lyapunov expansion. Specifically we have that from \eqref{M_X_non}, \eqref{E_SS_non} and \eqref{F_SS_non},
\be\label{Nss}
\begin{aligned}
\mathcal N_{SS}[z] := &~{} F_{SS,non} + m_{SS}E_{SS,non} + n_{SS} M_{SS,non} \\
 =&~{}  \int \Bigg(-8\re(B\bar z)|z_x|^2 - 8\re(B_x\bar z_x)|z^2| - 8|z|^2|z_x|^2 - 12\re(B_xz_x\bar z^2)\\
& \qquad \quad - 12\re(Bz\bar z_x^2) - 6\re(z_x^2\bar z^2) - 12\re(B_x\bar z_x)|z|^2 - 12\re(B\bar z)|z_x|^2 \\
& \qquad \quad - 6|z|^2|z_x|^2 + 24|B|^2(|z|^2 + 2\re(B\bar z))^2\\
& \qquad \quad + 8(|z|^2 + 2\re(B\bar z))^3 -2m_{SS}(|z|^4 + 2(2\re(B\bar z))|z|^2)\Bigg).
\end{aligned}
\ee
Clearly, in the case $\|z\|_{H^2}$ small, one has $|\mathcal N_{SS}[z]| \lesssim \|z\|_{H^1}^3$,  since $\|z\|_{L^\infty} \lesssim \|z\|_{H^1}$. Summarizing, we have the following expansion for the Lyapunov functional $\mathcal{H}_{SS}$:
\[
\begin{aligned}
\mathcal{H}_{SS}[B+z] =& ~ {}F_{SS}[B+z] +  m_{SS} E_{SS}[B+z] + n_{SS} M_{SS}[B+z] \nonu\\
=& ~ {}\mathcal{H}_{SS}[B] \\
&~ {} + 2\re \int \bar{z}\Big[B_{xxxx}+8B_x^2 \bar B +14|B|^2 B_{xx}+6B^2 \bar B_{xx} +12  |B_x|^2 B \\
&~ {} \qquad \qquad\qquad + 24 |B|^4 B - m_{SS}(B_{xx} +4 |B|^2 B)+ n_{SS}B\Big] \nonu\\
&~ {}+ 2\re\int \frac{\bar{z}}{2}\Big[z_{4x}  + 14|B|^2z_{xx} + 6B^2\bar{z}_{xx} - 14 B\bar B_x z_x -10\bar B B_x z_x - 16BB_x \bar{z}_x \\
&~ {} \qquad \qquad \qquad   - 14|B_x|^2z - 6B_x^2\bar{z}  + 48|B|^2B^2 \bar z  + 24|B|^4z \\
&~ {} \qquad \qquad \qquad - m_{SS}[z_{xx} +4B^2\bar{z} + 8|B|^2z] + n_{SS} z\Big]\\
&~ {}   + \mathcal N_{SS}[z] =:  \mathcal H_{SS}[ B] + \mathcal G_{SS}[z] + \mathcal Q_{SS}[z] + \mathcal N_{SS}[z],
\end{aligned} 
\]
with  $\mathcal N_{SS}[z]$ presented in \eqref{Nss}. 

\subsection*{End of proof in the SS case} This finally proves \eqref{decomp}, \eqref{lineal_lineal}-\eqref{perEcBp_0}, \eqref{quad_quad}-\eqref{perEcBp_1} and \eqref{Nonlinear} in the $SS$ case.

\medskip

Since the SY case is somehow standard and close to SS, we will prefer to prove in full detail the more complicated case of KM and P breathers; the remaining SY case will be at the end of the proof.

\medskip

\noindent
\emph{Step 5. Contribution of the second energy terms. The case of Kuznetsov-Ma and Peregrine.} Let $X=KM$ or $X=P.$ Now we deal with the contribution in $F_{X}$, given in \eqref{F_NLS}.
Compared with $F_{SS}$, there are minor differences, that we explain below. First of all, we also have the decomposition

\begin{align}\label{F_X_deco}
& F_{X}[B+z ] \nonu \\
&= ~{} \int \Big(|B_{xx}+ z_{xx}|^2 - 3(|B+z|^2-1)|B_x+z_x|^2 -\frac12((|B+z|^2)_x)^2 +\frac12(|B+z|^2-1)^3\Big) \nonu \\
&= ~ {}\underbrace{\int \Big(|B_{xx}|^2 +|z_{xx}|^2 + 2\re (B_{xx}\bar z_{xx})\Big)}_{F_{X,1}} \nonu \\
& \qquad ~ {} \underbrace{-3\int \Big((|B|^2-1)+ |z|^2 +2\re (B\bar z) \Big) \Big(|B_x|^2 +|z_x|^2 + 2\re(B_x \bar z_x) \Big)}_{F_{X,2}}\nonu  \\
& \qquad ~ {} \underbrace{-\frac12 \int \Big((B_x+z_x)(\bar B + \bar z) +(B+z)(\bar B_x + \bar z_x)\Big)^2}_{F_{X,3}} \nonu \\
& \qquad ~ {}\underbrace{+ \frac12\int \Big((|B|^2-1) + 2\re (B\bar z) +  |z|^2\Big)\Big((|B|^2-1) + 2\re (B\bar z) +  |z|^2\Big)^2}_{F_{X,4}}.
\end{align}
Consequently the zeroth, linear, quadratic and nonlinear parts  $F_{X,1,0}$, $F_{X,1,lin}$, $F_{X,1,quad}$ and $F_{X,1,non}$ described above, compared with \eqref{F_SS_1_lin}, \eqref{F_SS_1_quad} and \eqref{F_SS_1_non}, rest unchanged and we have $F_{X,1,0} = \int |B_{xx}|^2$,
\be\label{F_X_1}
F_{X,1,lin} =  2\re \int \bar z B_{xxxx}, \qquad F_{X,1,quad} = \int |z_{xx}|^2, \qquad F_{X,1,non}=0.
\ee
The term $F_{X,2,lin}$ is analogous to $F_{SS,2,lin}$ in \eqref{F_SS_2_lin}, except by a constant 3 (instead of 8) in front of it, and also the asymptotic constant equals 1. In fact,
 we have
\be\label{F_X_2_lin}
\begin{aligned}
F_{X,2,lin} = &~{} -3\int \Big(2(|B|^2-1)\re(B_x \bar z_x)   + 2|B_x|^2 \re (B\bar z) \Big) \\
 = &~{} -6 \re \int \bar z \Big(-((|B|^2-1) B_x )_x    + |B_x|^2  B \Big) \\
 = &~{} 6\re \int \bar z (B_x^2 \bar B + (|B|^2-1) B_{xx}).
 \end{aligned}
\ee
Also, the term $F_{X,2,quad}$ is analogous to $F_{SS,2,quad}$ in \eqref{F_SS_2_quad}, except by a constant 3 (instead of 8) in front of it and the asymptotic constant 1. In fact, we have
\be\label{F_X_2_quad}
\begin{aligned}
F_{X,2,quad} = &~{} -3\int \Big((|B|^2-1)|z_x|^2 + |B_x|^2|z|^2 + 2\re(B \bar z)\times 2\re(B_x \bar z_x)  \Big) \\
= &~{} 2\re \int \bar z \Big( \frac32 (|B|^2-1)z_{xx} +  3 \bar BB_x z_x+  \frac32 \bar BB_{xx} z + \frac32 ( B_x^2 +  BB_{xx}) \bar{z}  \Big).
 \end{aligned}
\ee
Finally, the nonlinear term $F_{X,2,non}$ is given by
\be\label{F_X_2_non}
\begin{aligned}
F_{X,2,non} = &~{} -3\int |z|^2  \Big(|z_x|^2 + 2\re(B_x \bar z_x) \Big)-6\int \re (B\bar z)  |z_x|^2 .
 \end{aligned}
\ee
Similarly, the term $F_{X,3,lin}$ is analogous to $F_{SS,3,lin}$ in \eqref{F_SS_3_lin}, except by a constant $\frac12$ (instead of 3) in front of it. We have first
\be\label{F_X_3}
\begin{aligned}
F_{X,3}=& ~ {}-\frac12 \int \Big((B_x+z_x)(\bar B + \bar z) +(B+z)(\bar B_x + \bar z_x)\Big)^2\\
=& ~ {} -\frac12 \int \Big( B_x \bar B + B_x \bar z + z_x \bar B + z_x \bar z  + \bar B_x  B + \bar B_x z + \bar z_x B + \bar z_x  z \Big)^2,
\end{aligned} 
\ee
and the linear contribution is given by
\be\label{F_X_3_lin}
F_{X,3,lin}=  2\re \int  \bar z  (|B|^2 B_{xx} + B^2 \bar B_{xx}  + 2 B|B_x|^2).
\ee
On the other hand, the quadratic contribution $F_{X,3,quad}$ from \eqref{F_X_deco} is analogous to $F_{SS,3,quad}$ in \eqref{F_SS_deco}-\eqref{F_SS_3_quad}, except by a constant 
$\frac12$ (instead of 3) in front of it. Therefore, the quadratic term is given by
\be\label{F_X_3_quad}
\begin{aligned}
 F_{X,3,quad} & = ~{}  2\re \int  \bar z\Big(\frac12 |B|^2z_{xx} +\frac12 B^2\bar{z}_{xx} + B\bar{B}_x z_x + BB_x \bar z_x \\
 &~{} \qquad \qquad \qquad  + ( |B_x|^2+  B\bar B_{xx} + \frac12\bar B B_{xx})z + \frac12 B B_{xx} \bar z \Big). 
\end{aligned}
\ee
The term $F_{X,3,non}$ is given now by
\be\label{F_X_3_non}
\begin{aligned}
F_{X,3,non}= &~{}  -\frac12 \int \Big(  ( z_x \bar z)^2+( \bar z_x  z)^2 \Big) - \int B_x\Big(    z_x \bar z^2 +   \bar z_x  |z|^2 \Big)\\
  &~{}  - \int \bar B \Big(   z_x^2    \bar z +   |z_x|^2  z \Big)  - \int \Big(  \bar B_x z_x |z|^2 + B  |z_x|^2 \bar z  +  |z_x|^2  | z|^2 \Big)\\
   &~{}  - \int \Big(   \bar B_x  \bar z_x  z^2 + B  \bar z_x^2    z \Big).
\end{aligned}
\ee
Finally, the term $F_{X,4,lin}$ requires more care than the others. We have this time ($X=KM,P$)
\[
F_{X,4}= \frac12\int \Big((|B|^2-1)  + 2\re (B\bar z) +  |z|^2\Big)\Big((|B|^2-1)  + 2\re (B\bar z) +  |z|^2\Big)^2.
\]
(Compare with $F_{SS,4}$ in \eqref{F_SS_deco}.) First of all, we have
\[
\begin{aligned}
F_{X,4}=&~{} \frac12\int \Big( (|B|^2-1)^2 + 4(\re (B\bar z))^2 +  |z|^4  +  4(|B|^2-1) \re (B\bar z)\\ 
& ~{} \qquad \qquad  + 2(|B|^2-1)  |z|^2 + 4|z|^2 \re (B\bar z)  \Big)\times\Big((|B|^2-1) + 2\re (B\bar z) +  |z|^2\Big). 
\end{aligned}
\]
Therefore, the linear terms are given by
\be\label{F_X_4_lin}
\begin{aligned}
F_{X,4,lin} 
=&~{}  \frac12 \int  3(|B|^2-1)^2 (2\re (B\bar z)) =  2\re\int \frac32(|B|^2-1)^2B\bar z. 
\end{aligned}
\ee
Moreover, the quadratic terms are given by
\[
\begin{aligned}
F_{X,4,quad}=&~{} \frac12\int \Big(  4(\re (B\bar z))^2   + 2(|B|^2-1)  |z|^2  \Big)(|B|^2-1)  +\frac12\int  (|B|^2-1)^2|z|^2 \\
& ~{} +\frac12\int \Big(   4(|B|^2-1) \re (B\bar z) \Big) 2\re (B\bar z) . 
\end{aligned}
\]
which simplify to
\be\label{F_X_4_quad}
\begin{aligned}
F_{X,4,quad} = &~{} \frac12 \int \Big( 3(|B|^2-1)^2 |z|^2 + 12(|B|^2-1)\re (B\bar z)\re (B\bar z) \Big) \\
&=\re \int \bar{z}\left( \frac32(|B|^2-1)^2 z + 6(|B|^2-1)B\re (B\bar z) \right).
\end{aligned}
\ee
Finally, $F_{X,4,non}$ is given by
\be\label{F_X_4_non}
\begin{aligned}
F_{X,4,non} = &~{}  2\int (|B|^2-1) \re (B\bar z)  |z|^2 \\
& ~{}+  \int \Big(  2(\re (B\bar z))^2  + (|B|^2-1)  |z|^2  \Big)\Big(2\re (B\bar z) +  |z|^2\Big)\\
& ~{}+  \frac12\int |z|^2 \Big(  |z|^2  + 4 \re (B\bar z)  \Big)\Big((|B|^2-1) + 2\re (B\bar z) +  |z|^2\Big).
\end{aligned}
\ee

\medskip

\noindent
\emph{Step 6. Gathering terms. The case of Kuznetsov-Ma and Peregrine.} From \eqref{F_X_1}, \eqref{F_X_2_lin}, \eqref{F_X_3_lin} and \eqref{F_X_4_lin} we conclude that the linear part of $F_{X}$, $X=NLS$ is given by
\[
\begin{aligned}
 F_{X,lin}:= & ~{} F_{X,1,lin} +F_{X,2,lin}+F_{X,3,lin}+F_{X,4,lin}\\
=& ~{} 2\re \int \bar z B_{xxxx} + 6\re \int \bar z (B_x^2 \bar B + (|B|^2-1) B_{xx})\\
&+ 2\re \int  \bar z  (|B|^2 B_{xx} + B^2 \bar B_{xx}  + 2 B|B_x|^2) +   2\re\int \frac32(|B|^2-1)^2B\bar z\\
=& ~{} 2\re \int \bar z \Big( B_{xxxx} + (4|B|^2-3) B_{xx} + 3B_x^2 \bar B +  2 B|B_x|^2 + B^2 \bar B_{xx} + \frac32(|B|^2-1)^2B\Big).
\end{aligned}
\]
On the other hand, collecting the terms in \eqref{F_X_1}, \eqref{F_X_2_quad}, \eqref{F_X_3_quad} and \eqref{F_X_4_quad}, the quadratic part of $F_{NLS}$ is given by 
\[
\begin{aligned}
 F_{X,quad}:= &~{}  F_{X,1,quad} +F_{X,2,quad}+F_{X,3,quad}+F_{X,4,quad}\\
=&~{} \int |z_{xx}|^2\\
&~{} + 2\re \int \bar z \left(\frac32 (|B|^2-1)z_{xx} +  3 \bar BB_x z_x+  \frac32 \bar BB_{xx} z + \frac32 ( B_x^2 +  BB_{xx}) \bar{z}  \right)\\
& ~{} +2 \re \int \Bigg( \frac12 |B|^2z_{xx} +\frac12 B^2\bar{z}_{xx} + B\bar{B}_x z_x + BB_x \bar z_x \\
 &~{} \qquad \qquad \qquad  + \Big( |B_x|^2+  B\bar B_{xx} + \frac12\bar B B_{xx}\Big)z + \frac12 B B_{xx} \bar z  \Bigg)\\
 &~{} + 2\re \int \bar{z}\left( \frac34(|B|^2-1)^2 z + 3(|B|^2-1)B\re (B\bar z) \right).
\end{aligned}
\]
Therefore,
\be\label{F_X_quad}
\begin{aligned}
 F_{X,quad} =&~{}2\re \int \frac{\bar z}2 \Bigg( z_{4x} +(4|B|^2-3)z_{xx} +B^2\bar{z}_{xx} + (2B\bar{B}_x  +  6 \bar BB_x) z_x +2 BB_x \bar z_x \\
& ~{} \qquad \qquad  \quad  + \Big(2 |B_x|^2+  2B\bar B_{xx} + 4\bar B B_{xx}\Big)z  +( 3B_x^2 +  4BB_{xx}) \bar{z} \\
& ~{} \qquad \qquad  \quad  + \frac32(|B|^2-1)^2 z + 6(|B|^2-1)B\re (B\bar z)    \Bigg).
\end{aligned}
\ee
Finally, we also collect the higher order terms in the Lyapunov expansion. Specifically we have that \eqref{F_X_1}, \eqref{F_X_2_non}, \eqref{F_X_3_non} and \eqref{F_X_4_non} leads to
\be\label{Nnls}
\begin{aligned}
\mathcal N_{X}[z] :=&~{}  F_{X,1,non} +F_{X,2,non}+F_{X,3,non}+F_{X,4,non} \\
=& ~{}\int \Bigg(-6\re(B\bar z)|z_x|^2 - 6\re(B_x\bar z_x)|z^2| - 3|z|^2|z_x|^2 - 2\re(B_xz_x\bar z^2) \\
&~{} \qquad - 2\re(Bz\bar z_x^2) - \re(z_x^2\bar z^2) - 2\re(B_x\bar z_x)|z|^2 - 2\re(B\bar z)|z_x|^2  - |z|^2|z_x|^2 \\
&~{} \qquad  + \frac32(|B|^2-1)(|z|^2 + 2\re(B\bar z))^2+ \frac12(|z|^2 + 2\re(B\bar z))^3 \\
&~{} \qquad  -\frac12m_{X}(|z|^4 + 2(2\re(B\bar z))|z|^2)\Bigg).
\end{aligned}
\ee
We conclude that Proposition \ref{Decomposition_Prop} in the $KM$ and $P$ cases (except for the proof of $\mathcal G_X[z] =0$) is deduced from the above representation. Indeed, we have \eqref{decomp} by gathering
\[
\begin{aligned}
\mathcal{H}_{X}[B+z] = & ~ F_{X}[B+z] +  m_{X} E_{X}[B+z] + n_{X} M_{X}[B+z] \nonu\\
=& ~{}\mathcal{H}_{X}[B] + \mathcal G_X[z] + \mathcal Q_X[z] + \mathcal N_X[z],
\end{aligned} 
\]
as desired, selecting $X=KM$ for the KM  breather,  $m_{KM}=\beta^2,~n_{KM}=0$; and selecting $X=P$ for the Peregrine breather, and $m_{P}=n_{P}=0$.

\medskip

%
%

\noindent
\emph{Step 7. The case of Satsuma-Yajima.} This case is very similar to the previous $KM/P$ cases, with some minor differences in constants. 
Let $m_{SY}=(c_2^2+c_1^2),~n_{SY}=c_2^2c_1^2$ as in the beginning of Section \ref{Sect:3}. Let also $X=SY$, $B=B_X$ and consider $F_{SY}[B+z]$ as in \eqref{F_NLS0}. First of all, note that the linear and quadratic contributions   $F_{SY,lin}$ and  $F_{SY,quad}$ from $F_{SY}[B+z]$ are as in the $KM/P$ cases, but removing the asymptotic constant $1$. Additionally, the higher order terms are given by
\be\label{Nsy}
\begin{aligned}
\mathcal N_{SY}[z]&:=\int \Big(-6\re(B\bar z)|z_x|^2 - 6\re(B_x\bar z_x)|z^2| - 3|z|^2|z_x|^2 - 2\re(B_xz_x\bar z^2) \\
&\qquad \qquad  - 2\re(Bz\bar z_x^2)- \re(z_x^2\bar z^2) - 2\re(B_x\bar z_x)|z|^2 - 2\re(B\bar z)|z_x|^2  - |z|^2|z_x|^2 \\
&\qquad \qquad + \frac32|B|^2(|z|^2 + 2\re(B\bar z))^2 + \frac12(|z|^2 + 2\re(B\bar z))^3 \\
& \qquad\qquad - \frac12m_{SY}(|z|^4 + 2(2\re(B\bar z))|z|^2)\Big).
\end{aligned}
\ee
Clearly we have the estimate $|\mathcal N_{SY}[z]|\lesssim \|z\|_{H^1}^3$ under small data assumptions. Finally, the expansion of the Lyapunov functional $\mathcal{H}_{SY}[B+z]$ is given by:
\[
\begin{aligned}
&~{} \mathcal{H}_{SY}[B+z] \\
&~{} = F_{SY}[B+z] +  m_{SY} E_{SY}[B+z] + n_{SY} M_{SS}[B+z] \nonu\\
&~{}= \mathcal{H}_{SY}[B]\\
& ~{}\quad + 2\re \int \bar{z}\Big[B_{xxxx} + 3 B_x^2 \bar B + 4 |B|^2 B_{xx}  + 2 B|B_x|^2 + B^2 \bar B_{xx} + \frac32 |B|^4 B \\
&~{} \qquad \qquad \qquad  - m_{SY}(B_{xx} + |B|^2 B) + n_{SY}B\Big]\nonu\\
&~{} \quad + 2\re\int \frac{\bar{z}}{2}\Big[ z_{4x} + 4|B|^2z_{xx} + B^2\bar{z}_{xx} + (2B\bar{B}_x+6\bar{B}B_x   ) z_x + 2BB_x\bar{z}_x \\
& ~{}\qquad \qquad \qquad + \left(  2|B_x|^2 + 2B\bar{B}_{xx}      +4\bar{B}B_{xx} + \frac92|B|^4 \right)z \\
&~{} \qquad \qquad \qquad + \left(3 B_x^2 + 4BB_{xx} + 3|B|^2B^2 \right)\bar{z}    - m_{SY}[z_{xx} +B^2\bar{z} + 2|B|^2z] + n_{SY} z \Big] \\
&~{}  \quad + \mathcal N_{SY}[z] =:\mathcal{H}_{SY}[B] +  \mathcal G_{SS}[z] + \mathcal Q_{SS}[z] + \mathcal N_{SS}[z],
\end{aligned} 
\]
with  $\mathcal N_{SY}[z]$ as defined in \eqref{Nsy}. This proves  in the SY case. The proof is complete.
\end{proof}

\bigskip

\section{Existence of critical points: Proof of Theorem \ref{TH1}}\label{Sect:4}

\medskip

In this section we prove Theorem \ref{TH1}. Recall that Theorem \ref{TH1} is a fundamental part to complete the proof of Theorem \ref{TH1a}.

\medskip

From Proposition \ref{Decomposition_Prop} (more precisely, \eqref{perEcBp_0}, \eqref{Ec_SY_0}, \eqref{Ec_KM_0} and \eqref{Ec_P_0}), we see that \eqref{perEcBp}, \eqref{Ec_SY}, \eqref{Ec_KM} and \eqref{Ec_P} are proved (and so Theorem \ref{TH1}) if we show in \eqref{lineal_lineal} that 
\be\label{Aux_00}
\mathcal G[B_X]\equiv 0,
\ee
for the choices of $m_X$ and $n_X$ given at the beginning of Section \ref{Sect:3}. Although these proofs are straightforward and painful, we present them in some detail to further checking by the reader. 

\subsection{Proof of \eqref{Aux_00} in the $SS$ case} First we have

\begin{lem}[Alternative form for \eqref{perEcBp}]\label{QODE} Let $m_{SS}= - 2(\bt^2-\al^2) $ and $n_{SS}=(\al^2+\bt^2)^2$, and let $B=B_{SS} = Q_\bt e^{i\Theta}$ be the breather solution \eqref{R} of \eqref{SS}. Then $B$ satisfies \eqref{perEcBp} if and only if $Q_\bt$ solves
\be\label{QODE0}
\begin{aligned}
&Q_{\bt}'''' + 4i\al Q_{\bt}'''  - 6\al^2 Q_{\bt}''  - 4i\al^3 Q_{\bt}' +\al^4 Q_\bt  + 8\bar{Q}_{\bt} Q_{\bt}'^2 \\
& + 14 Q_{\bt}  \bar Q_{\bt} Q_{\bt}'' +12 Q_\bt' \bar Q_\bt' Q_\bt   + 32i\al Q_{\bt}\bar Q_{\bt} Q_{\bt}' - 16 \al^2 Q_{\bt}^2 \bar Q_{\bt} + 6Q_{\bt}^2\bar{Q}_{\bt}'' \\
& + 24 Q_{\bt}^3 \bar Q_{\bt}^2 - m_{SS}\left( Q_\bt'' +2i\al Q_\bt' -\al^2 Q_\bt + 4Q_\bt^2 \bar Q_\bt  \right) +n_{SS} Q_\bt =0.
\end{aligned}
\ee
\end{lem}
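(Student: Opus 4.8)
The plan is to exploit the gauge structure $B=Q_\bt e^{i\Theta}$, $\Theta=\al x+\text{const}$, together with the $U(1)$ covariance of \eqref{perEcBp}. First I would note that \eqref{perEcBp} is an identity in $x$ at fixed $t$, so the travelling shift $x+\ga t+x_2$ in the argument of $Q_\bt$ and the additive constants in $\Theta$ are immaterial; since $\partial_x\Theta=\al$, one has the gauge identities
\be
\partial_x^n B = e^{i\Theta}(\partial_x+i\al)^n Q_\bt, \qquad \partial_x^n \bar B = e^{-i\Theta}(\partial_x-i\al)^n \bar Q_\bt .
\ee
Every monomial in \eqref{perEcBp} is $U(1)$-homogeneous of net degree one in $B$ (one more unconjugated than conjugated factor), so after substitution the common phase $e^{i\Theta}$ factors out of the entire left-hand side. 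Because $e^{i\Theta}$ never vanishes, \eqref{perEcBp} holds \emph{if and only if} the remaining bracket, an expression in $Q_\bt$ alone, vanishes; identifying that bracket with the left-hand side of \eqref{QODE0} is then the whole content of the lemma and yields the claimed equivalence.

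For the linear fourth-order term I would expand $(\partial_x+i\al)^4$ by the binomial theorem, obtaining $Q_\bt''''+4i\al Q_\bt'''-6\al^2Q_\bt''-4i\al^3Q_\bt'+\al^4Q_\bt$, which is precisely the first line of \eqref{QODE0}. The constant-coefficient lower-order terms $m_{SS}(B_{xx}+4|B|^2B)+n_{SS}B$ factor in exactly the same way, using $(\partial_x+i\al)^2Q_\bt=Q_\bt''+2i\al Q_\bt'-\al^2Q_\bt$, reproducing the $m_{SS}$- and $n_{SS}$-weighted terms of the last line.

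The core of the computation is the nonlinear block $8B_x^2\bar B+14|B|^2B_{xx}+6B^2\bar B_{xx}+12|B_x|^2B+24|B|^4B$. After substituting the gauge identities and factoring $e^{i\Theta}$, I would expand each term and grade the result by powers of $\al$. The $\al$-independent pieces reproduce verbatim the phase-free nonlinearities $8\bar Q_\bt (Q_\bt')^2+14Q_\bt\bar Q_\bt Q_\bt''+12Q_\bt'\bar Q_\bt' Q_\bt+6Q_\bt^2\bar Q_\bt''+24Q_\bt^3\bar Q_\bt^2$. The terms linear in $i\al$ must then be collected: those proportional to $Q_\bt^2\bar Q_\bt'$ cancel, while those proportional to $Q_\bt\bar Q_\bt Q_\bt'$ add up to the coefficient $32$, producing $32i\al Q_\bt\bar Q_\bt Q_\bt'$; and the $\al^2$-terms combine to $-16\al^2Q_\bt^2\bar Q_\bt$. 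Assembling the linear, nonlinear and mass/energy contributions then gives exactly \eqref{QODE0}.

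The computation is elementary, and the only delicate point — hence the main obstacle — is precisely this bookkeeping of the $\al$-dependent cross terms in the cubic nonlinearities: one must verify both cancellations, namely the vanishing of the $i\al Q_\bt^2\bar Q_\bt'$ contributions and the exact summation of the surviving coefficients to $32$ and to $-16$. Rather than a conceptual difficulty, this is where sign or coefficient slips would occur, so I would organize the nonlinear terms in a table graded by powers of $\al$ and check each homogeneous slice separately against \eqref{QODE0}.
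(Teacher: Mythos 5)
Your proposal is correct and follows essentially the same route as the paper's Appendix~\ref{B}: substitute the gauge identities $\partial_x^nB=e^{i\Theta}(\partial_x+i\al)^nQ_\bt$ (the paper writes out $B_x,\dots,B_{xxxx}$ explicitly), use the net $U(1)$-degree one of every monomial to factor out the nonvanishing phase $e^{i\Theta}$, and collect the $\al$-graded cross terms — and your bookkeeping checks out, with the $i\al Q_\bt^2\bar Q_\bt'$ coefficients $-12+12$ cancelling, the $Q_\bt\bar Q_\bt Q_\bt'$ coefficients $16+28-12$ summing to $32$, and the $\al^2 Q_\bt^2\bar Q_\bt$ coefficients $-8-14-6+12$ giving $-16$, exactly as in the paper.
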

\begin{proof}
See Appendix \ref{B} for a proof of this result.
\end{proof}

We continue with the proof of \eqref{Aux_00}. Replacing $m_{SS}$ and $n_{SS}$,
\be\label{QODE1}
\begin{aligned}
&Q_{\bt}'''' + 4i\al Q_{\bt}''' - 2(2\al^2+\bt^2) Q_{\bt}'' - 4i\al \bt^2 Q_{\bt}' +  \bt^2(4\al^2 +\bt^2) Q_{\bt}\\
& + 8\bar{Q}_{\bt} Q_{\bt}'^2 + 32i\al Q_{\bt}\bar Q_{\bt} Q_{\bt}'+ 14 Q_{\bt}  \bar Q_{\bt} Q_{\bt}''   \\
&  - 8(\al^2 + \bt^2)Q_{\bt}^2 \bar Q_{\bt} + 12 Q_{\bt}\bar Q_{\bt}'Q_{\bt}'  + 6Q_{\bt}^2\bar{Q}_{\bt}'' + 24 Q_{\bt}^3 \bar Q_{\bt}^2 =0.
\end{aligned}
\ee
From the third order ODE \eqref{edoQ2} satisfied by the  profile $Q_\bt$, we have
\[
\Bigg( Q_\beta''' +9 Q_\beta \bar Q_\beta Q_\beta'  + 3Q_\beta^2 \bar Q_\beta'   -\beta^2 Q_\beta'  +3i\alpha \Big( Q_\beta'' -\beta^2 Q_\beta +2 Q_\beta ^2 \bar Q_\beta  \Big) \Bigg)'=0.
\]
Therefore,
\[
\begin{aligned}
&Q_{\bt}''''  + 4 i\al Q_{\bt}''' - i\al Q_{\bt}'''  + 9\bar{Q}_{\bt}Q_{\bt}'^2 + 9 Q_\beta \bar Q_\beta Q_{\bt}''  + 15Q_{\bt} \bar Q_{\bt}' Q_{\bt}' \\
& \qquad + 3Q_{\bt}^2\bar{Q}_{\bt}''  -\bt^2Q_{\bt}'' + 3i\al Q_{\bt}'''  - 3i\al\bt^2Q_{\bt}'    + 12i\al Q_{\bt} Q_{\bt}' \bar Q_{\bt}   + 6i\al Q_{\bt}^2\bar{Q}_{\bt}' =0.
\end{aligned}
\] 
Using \eqref{edoQ2} and replacing above, we have
\[
\begin{aligned}
& Q_{\bt}'''' + 4 i\al Q_{\bt}'''   + 9\bar{Q}_{\bt}Q_{\bt}'^2 + 9 Q_\beta \bar Q_\beta Q_{\bt}''  + 15Q_{\bt} \bar Q_{\bt}' Q_{\bt}' + 3Q_{\bt}^2\bar{Q}_{\bt}'' \\
&  -\bt^2Q_{\bt}''  - 3i\al\bt^2Q_{\bt}'    + 12i\al Q_{\bt} Q_{\bt}' \bar Q_{\bt}    + 6i\al Q_{\bt}^2\bar{Q}_{\bt}' \\
&  + i\al \Big( 9 Q_\beta \bar Q_\beta Q_\beta'  + 3Q_\beta^2 \bar Q_\beta'   -\beta^2 Q_\beta'  +3i\alpha \Big( Q_\beta'' -\beta^2 Q_\beta +2 Q_\beta ^2 \bar Q_\beta  \Big)  \Big) =0.
\end{aligned}
\]
Namely
\[
\begin{aligned}
&Q_{\bt}'''' + 4 i\al Q_{\bt}'''  + 9\bar{Q}_{\bt}Q_{\bt}'^2 + 9 Q_\beta \bar Q_\beta Q_{\bt}''  + 15Q_{\bt} \bar Q_{\bt}' Q_{\bt}' + 3Q_{\bt}^2\bar{Q}_{\bt}''  \\
& -(3\al^2+ \bt^2) Q_{\bt}''   - 4 i\al\bt^2Q_{\bt}'   +21 i\al Q_{\bt} Q_{\bt}' \bar Q_{\bt}     + 9i\al Q_{\bt}^2\bar{Q}_{\bt}' +3\al^2\bt^2 Q_\bt  -6\al^2  Q_\beta^2 \bar Q_\beta=0.
\end{aligned}
\]
Comparing with \eqref{QODE1}, we just must show the following nonlinear identity satisfied by the soliton $Q_\bt$ \eqref{SolQ}:
\be\label{nlid1}
\begin{aligned}
& -(\al^2+ \bt^2) Q_{\bt}'' +   \bt^2(\al^2+\bt^2) Q_\bt - 2(\al^2 +4\bt^2)  Q_\beta^2 \bar Q_\beta + 11 i\al Q_{\bt} Q_{\bt}' \bar Q_{\bt}\\
&- 9i\al Q_{\bt}^2\bar{Q}_{\bt}' - \bar{Q}_{\bt}Q_{\bt}'^2  - 3Q_{\bt} \bar Q_{\bt}' Q_{\bt}'  + 3Q_{\bt}^2\bar{Q}_{\bt}'' + 5 Q_\beta \bar Q_\beta Q_{\bt}''
 + 24 Q_{\bt}^3 \bar Q_{\bt}^2 =0.
\end{aligned}
\ee
The proof of this nonlinear identity is direct but cumbersome: see Appendix \ref{App_nlid1} for a detailed proof.  This ends the proof of \eqref{Aux_00}.

\medskip

\subsection{Proof of \eqref{Aux_00} in the remaining cases}\label{Calculos0} The rest of proofs in the cases $SY$, $KM$ and $P$ (\eqref{Ec_SY}, \eqref{Ec_KM} and \eqref{Ec_P}) are similar to the above written, and add no new insights nor mathematical clues about the breathers themselves. For this reason, we have placed them in the Appendix \ref{Calculos}.

\bigskip

\section{Stability of the $SS$ breather. Proof of Theorem \ref{TH2}}\label{Sect:5}

This Section is devoted to the proof of Theorem \ref{TH2}. The proof requires several steps, that we represent in different subsections.

\medskip

Without loss of generality, using the scaling and space invariances of the equation, we assume $\beta=1$ and $x_2=0$.

\subsection{Continuous spectrum and nondegeneracy of the kernel} 
Let $B=B_{SS}$ be a SS breather as in \eqref{R}, and $\mathcal L_{SS}$ be the linear operator in \eqref{perEcBp_1}. By considering $z$ and $\bar z$ as independent variables, as usual, and with a slight abuse of notation, we can write $\mathcal{L}_{SS}$ as 
\begin{equation}\label{matrix_L}
 \mathcal{L}_{SS} = \left(\begin{matrix}\mathcal{L}_{SS,1} & \mathcal{L}_{SS,2} \\ \mathcal{L}_{SS,3} & \mathcal{L}_{SS,4}\end{matrix}\right),
\end{equation}
where
\[
\begin{aligned}
\mathcal{L}_{SS,1} := &~{}  \partial_x^4  + (14|B|^2+m_{SS}) \partial_x^2 + (12B\bar{B}_x + 16\bar{B}B_{x}) \partial_x\\
& ~{}  +(14\bar{B}B_{xx}+12 |B_x|^2 + 12B\bar{B}_{xx}+72|B|^4  +8m_{SS}|B|^2 + n_{SS}),
\end{aligned}
\]
\[
\mathcal{L}_{SS,2} :=     6B^2 \partial_x^2  + 12 BB_x \partial_x + (14BB_{xx} + 8B_x^2 + 48|B|^2B^2 + 4m_{SS}B^2),
\]
\[
\begin{aligned}
\mathcal{L}_{SS,3} :=&~{}  6 \bar B^2 \partial_x^2  + 12 \bar B \bar B_x \partial_x + (14 \bar B\bar B_{xx} + 8\bar B_x^2 + 48|B|^2\bar B^2 + 4m_{SS} \bar B^2) \\
=&~{} \overline{\mathcal{L}}_{SS,2},
\end{aligned}
\]
and
\[
\begin{aligned}
\mathcal{L}_{SS,4} := &~{}  \partial_x^4  + (14|B|^2+m_{SS}) \partial_x^2 + (12\bar B B_x + 16 B \bar B_{x}) \partial_x\\
& ~{}  +(14B \bar B_{xx}+12 |B_x|^2 + 12\bar B B_{xx}+72|B|^4  +8m_{SS}|B|^2 + n_{SS}) \\
=&~{}\overline{ \mathcal{L}}_{SS,1}.
\end{aligned}
\]
Note that $ \mathcal{L}_{SS}$ is Hermitian as an operator defined in $H^2(\R;\Com)$ with dense domain $H^4(\R,\Com)$. Therefore, its spectrum is real-valued. We start with the following result, essentially proved in \cite{AM}.

\begin{lem}\label{Cont_Spec} The operator $\mathcal L_{SS}$ is a compact perturbation of the constant coefficients operator
\be\label{L0}
 \mathcal{L}_{SS,0}: = \left(\begin{matrix}  \partial_x^4  + m_{SS} \partial_x^2 +n_{SS} & 0 \\ 0 &  \partial_x^4  + m_{SS} \partial_x^2 +n_{SS}  \end{matrix}\right). 
\ee
In particular, the continuous spectrum of $\mathcal L_{SS}$ is the closed interval $[(\al^2 +\bt^2)^2,+\infty)$ in the case $\beta\geq \al$, and $[ 4\al^2 \bt^2 ,+\infty)$ in the case $\beta< \al$. 
\end{lem}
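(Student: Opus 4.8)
The plan is to establish the lemma by decomposing $\mathcal{L}_{SS}$ into a constant-coefficient part plus a relatively compact remainder, and then invoking Weyl's essential-spectrum theorem. First I would verify that the claimed splitting $\mathcal{L}_{SS} = \mathcal{L}_{SS,0} + \mathcal{K}$ holds, where $\mathcal{L}_{SS,0}$ is the diagonal operator in \eqref{L0}. This amounts to collecting every term in the entries $\mathcal{L}_{SS,1},\dots,\mathcal{L}_{SS,4}$ that carries a coefficient built from $B=B_{SS}$ or its derivatives, and checking that what remains on the diagonal is exactly $\partial_x^4 + m_{SS}\partial_x^2 + n_{SS}$. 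Here one uses that $B_{SS}(t,x)=Q_\beta(x+\gamma t + x_2)e^{i\Theta}$ with $Q=Q_\eta$ given by \eqref{SolQ}, so that $B$, $B_x$, $B_{xx}$, and all products $|B|^2$, $B^2$, $|B_x|^2$, etc., decay exponentially as $|x|\to\infty$ (since $Q$ and its derivatives do). Consequently every coefficient in $\mathcal{K}$ is a smooth, exponentially decaying function of $x$.

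Next I would argue that $\mathcal{K}$ is $\mathcal{L}_{SS,0}$-compact. Each entry of $\mathcal{K}$ is of the form $a(x)\partial_x^j$ with $j\in\{0,1,2\}$ and $a\in \mathcal{S}(\R)$ (Schwartz, by the exponential decay just noted); composing such an operator with the resolvent $(\mathcal{L}_{SS,0}-\lambda)^{-1}$, which gains four derivatives, yields an operator of the form $a(x)\langle D\rangle^{j-4}\cdot(\text{bounded})$. Since $j-4<0$, the Rellich-type criterion (a multiplication by a decaying function times a negative-order Fourier multiplier is compact on $L^2$) shows $\mathcal{K}(\mathcal{L}_{SS,0}-\lambda)^{-1}$ is compact. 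Weyl's theorem then gives $\sigma_{ess}(\mathcal{L}_{SS}) = \sigma_{ess}(\mathcal{L}_{SS,0})$, and since $\mathcal{L}_{SS,0}$ has empty point spectrum, its continuous spectrum coincides with its essential spectrum. This reduces the problem to a direct symbol computation for the constant-coefficient diagonal operator.

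Finally I would compute $\sigma_c(\mathcal{L}_{SS,0})$ explicitly via the Fourier transform. Each diagonal block acts as the Fourier multiplier $p(\xi) = \xi^4 - m_{SS}\xi^2 + n_{SS}$ (the sign of the $\xi^2$ term following from $\partial_x^2 \mapsto -\xi^2$). Substituting $m_{SS}=-2(\beta^2-\alpha^2)$ and $n_{SS}=(\alpha^2+\beta^2)^2$ gives $p(\xi)=\xi^4 + 2(\beta^2-\alpha^2)\xi^2 + (\alpha^2+\beta^2)^2$. The continuous spectrum is the range $[\min_{\xi\in\R} p(\xi), +\infty)$. I would minimize $p$ over $\xi\ge 0$ by examining $p'(\xi)=0$: the interior critical point exists precisely when the coefficient $\beta^2-\alpha^2$ is negative, i.e. $\beta<\alpha$, in which case the minimum value works out to $4\alpha^2\beta^2$; when $\beta\ge\alpha$ the polynomial in $\xi^2$ is increasing on $[0,\infty)$ and the minimum is attained at $\xi=0$, giving $p(0)=(\alpha^2+\beta^2)^2$. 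This reproduces the two stated thresholds.

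The main obstacle I anticipate is not conceptual but bookkeeping: confirming that the off-diagonal blocks $\mathcal{L}_{SS,2}, \mathcal{L}_{SS,3}$ and the non-constant diagonal corrections are genuinely relatively compact requires care, because they carry $\partial_x^2$ operators (order $j=2$), so the gain of only $4-2=2$ negative derivatives, rather than a large margin, must be paired with the exponential spatial decay of the coefficients to secure compactness; the highest-order coefficients such as $6B^2\partial_x^2$ are exactly the delicate ones. I would handle these by factoring $a(x)\partial_x^2(\mathcal{L}_{SS,0}-\lambda)^{-1} = a(x)\cdot \partial_x^2\langle D\rangle^{-4}\cdot \langle D\rangle^4(\mathcal{L}_{SS,0}-\lambda)^{-1}$, noting the last factor is bounded and $a(x)\langle D\rangle^{-2}$ is compact on $L^2(\R)$ because $a$ decays and $\langle D\rangle^{-2}$ is smoothing, so the whole composition is compact; citing \cite{AM} for the analogous scalar computation should streamline this step.
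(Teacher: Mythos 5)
Your proposal is correct and follows essentially the same route as the paper, which offers no detailed argument for this lemma beyond citing \cite{AM}: exponential decay of the breather coefficients makes the variable-coefficient part relatively compact with respect to $\mathcal{L}_{SS,0}$, Weyl's theorem transfers the essential spectrum, and minimizing the Fourier symbol $\xi^4+2(\beta^2-\alpha^2)\xi^2+(\alpha^2+\beta^2)^2$ over $\xi^2\geq 0$ yields $(\alpha^2+\beta^2)^2$ when $\beta\geq\alpha$ and $4\alpha^2\beta^2$ when $\beta<\alpha$, exactly the stated thresholds. Your explicit handling of the delicate second-order terms such as $6B^2\partial_x^2$ via the factorization $a(x)\langle D\rangle^{-2}\cdot\langle D\rangle^{-2}\partial_x^2\cdot\langle D\rangle^4(\mathcal{L}_{SS,0}-\lambda)^{-1}$ is precisely the bookkeeping the paper leaves implicit.
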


Now we study the kernel of $\mathcal L_{SS}$. We have directly from \eqref{perEcBp}
\[
 \left(\begin{matrix} \partial_{x_1} B \\ \partial_{x_1}\bar B \end{matrix}\right) ,\left(\begin{matrix} \partial_{x_2} B \\ \partial_{x_2}\bar B \end{matrix}\right)\in  \ker  \mathcal L_{SS} . 
\]
Note that $ \partial_{x_1} B= i\al B$, which is nothing but the instability direction associated to the $U(1)$ invariance.  Moreover, following the ideas in \cite{AM}, based on the 1-D character of the ODEs involved, we have

\begin{lem}[Nondegeneracy]\label{Nondege}
\[
\ker \mathcal L_{SS} =\spawn \left\{  \left(\begin{matrix} \partial_{x_1} B \\ \partial_{x_1}\bar B \end{matrix}\right) ,\left(\begin{matrix} \partial_{x_2} B \\ \partial_{x_2}\bar B \end{matrix}\right)\right\}.
\]
\end{lem}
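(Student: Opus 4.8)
The plan is to read the kernel condition $\mathcal{L}_{SS}[z]=0$, with $\mathcal{L}_{SS}$ as in \eqref{perEcBp_1}, as an $\mathbb{R}$-linear fourth order ODE in the complex unknown $z$ (equivalently, an eighth order real system for $(\re z,\ima z)$), and to count its solutions that decay at both ends. First I would note that every element of $\ker\mathcal{L}_{SS}$ is automatically smooth and exponentially localized. Indeed, by \lemref{Cont_Spec} the bottom of the continuous spectrum equals $(\al^2+\bt^2)^2>0$ (resp. $4\al^2\bt^2>0$), so $0$ lies strictly below $\sigma_c(\mathcal{L}_{SS})$; comparing with the constant coefficient operator $\mathcal{L}_{SS,0}$ in \eqref{L0}, whose symbol is bounded below by $\inf\sigma_c>0$, forces any $H^2$ solution of $\mathcal{L}_{SS}[z]=0$ to decay exponentially together with its derivatives. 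Thus $\ker\mathcal{L}_{SS}$ is precisely the space of ODE solutions decaying as $x\to\pm\infty$.

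Next I would carry out the asymptotic analysis. Since $B$ and all its derivatives vanish at infinity, the variable coefficients in \eqref{perEcBp_1} die out and the equation reduces to the decoupled, constant coefficient problem $z_{4x}+m_{SS}z_{xx}+n_{SS}z=0$. With $m_{SS}=-2(\bt^2-\al^2)$ and $n_{SS}=(\al^2+\bt^2)^2$, the substitution $\la=\mu^2$ gives $\la^2+m_{SS}\la+n_{SS}=0$ with $\la_\pm=(\bt\pm i\al)^2$, hence the four characteristic exponents $\mu\in\{\pm(\bt+i\al),\pm(\bt-i\al)\}$. Exactly two of them, $-(\bt\pm i\al)$, have negative real part $-\bt$, so the space of solutions decaying at $+\infty$ has real dimension $4$, and symmetrically for $-\infty$. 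As the full solution space has real dimension $8$, the kernel is the intersection of two four dimensional subspaces. Differentiating \eqref{perEcBp} in $x_1$ and $x_2$ already exhibits the two $\mathbb{R}$-independent decaying elements $\partial_{x_1}B=i\al B$ (the $U(1)$ direction) and $\partial_{x_2}B$ (the translation direction), so $\dim\ker\mathcal{L}_{SS}\ge 2$ and it remains only to show that it is not larger.

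The heart of the matter, and the main obstacle, is to exclude a third independent decaying solution: the naive count $4+4-8=0$ is consistent with the value $2$ but does not prove it. Following \cite{AM} I would use the one dimensional character of the problem together with the two solutions already in hand: $\partial_{x_1}B$ and $\partial_{x_2}B$ allow a reduction of order of the ODE, lowering it to a problem of strictly smaller order whose coefficients are explicit in the breather profile $Q_\bt$ of \eqref{SolQ}, and I would then check that this reduced equation carries no nontrivial solution decaying at both ends, by tracking the asymptotic exponents through the reduction and showing that the decaying branches at $+\infty$ and $-\infty$ cannot be matched. The genuine difficulty, absent in the real valued mKdV breather of \cite{AM}, is the complex valued, $2\times2$ matrix nature of $\mathcal{L}_{SS}$: the reduction must respect the Hermitian symmetry $\mathcal{L}_{SS,4}=\overline{\mathcal{L}_{SS,1}}$, $\mathcal{L}_{SS,3}=\overline{\mathcal{L}_{SS,2}}$ and handle the $z$ and $\bar z$ couplings simultaneously, which is what makes the computation markedly heavier than in the scalar case. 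Once no extra decaying solution survives, the asserted identity $\ker\mathcal{L}_{SS}=\spawn\{(\partial_{x_1}B,\partial_{x_1}\bar B),(\partial_{x_2}B,\partial_{x_2}\bar B)\}$ follows.
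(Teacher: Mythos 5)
Your preparation matches the paper's up to the decisive point: exponential localization of kernel elements via the spectral gap of \lemref{Cont_Spec} and comparison with $\mathcal L_{SS,0}$ in \eqref{L0}, the characteristic exponents $\mu\in\{\pm(\bt+i\al),\pm(\bt-i\al)\}$, and the lower bound $\dim\ker\mathcal L_{SS}\geq 2$ from $\partial_{x_1}B=i\al B$ and $\partial_{x_2}B$. The genuine gap is at the heart of the lemma, the exclusion of a third decaying solution: you correctly observe that the naive count $4+4-8=0$ is inconclusive, and then you propose a reduction of order using the two known kernel elements, saying you ``would then check'' that the reduced equation admits no solution decaying at both ends by tracking exponents through the reduction. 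That verification is precisely the content of the lemma, and nothing in your text performs it; as written, your plan is a program, not a proof. Worse, its feasibility is itself in doubt: reduction of order via known solutions is a scalar-ODE device (this is how \cite{AM} handles the real-valued mKdV linearization), and for the real-linear coupled system in $(z,\bar z)$ with blocks satisfying $\mathcal L_{SS,3}=\overline{\mathcal L_{SS,2}}$, $\mathcal L_{SS,4}=\overline{\mathcal L_{SS,1}}$ there is no off-the-shelf analogue. The paper itself warns, in the remark preceding its proof, that the argument of \cite{AM} does not transfer to every vector-valued breather linearization, \cite{AMP1} being a case where it fails, so invoking the reduction ``following \cite{AM}'' without executing it leaves the key step unproved.

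The paper closes the argument by a different and much shorter route. Exploiting the fact that the second component of $\mathcal L_{SS}[z]=0$ is the complex conjugate of the first, and that $\mathcal L_{SS,0}$ is diagonal with identical entries (unlike the Sine-Gordon case of \cite{AMP1}), it reduces the asymptotic analysis to the first component alone and counts the admissible localized behaviors as $x\to+\infty$: with the normalization $\bt=1$ these are spanned by $e^{-x}\sin(\al x)$ and $e^{-x}\cos(\al x)$, i.e.\ exactly two, the same number as the explicit kernel directions $\{\partial_{x_1}B,\partial_{x_2}B\}$; a hypothetical third independent kernel element would then contradict $\dim\ker\mathcal L_{SS}\leq 2$. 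In other words, where your bookkeeping of real dimensions (a four-dimensional stable space at each end) pushed you toward a heavy matching scheme you did not carry out, the paper matches the kernel directly against the two-dimensional space of decaying asymptotic profiles per component. To repair your proposal you must either execute the reduced-order computation explicitly in this $2\times 2$, real-linear setting, or argue as the paper does at the level of admissible asymptotic behaviors.
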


\begin{rem}
The proof of this result follows the ideas in \cite{AM}, but not every vector valued linear operator around breathers will follow the same idea of proof. 
See \cite{AMP1} for a case where the argument in \cite{AM} does not apply. We will benefit here from the fact that the second component of $\mathcal L_{SS}[z]=0$ corresponds to the complex conjugate of the first one.
\end{rem}

\begin{proof}[Proof of Lemma \ref{Nondege}]
 Let $B_i := \partial_{x_i} B,~i=1,2,$ and  $z\in H^4$ be such that  $\mathcal L_{SS} [z]=0$, such that $\{z,B_1,B_2\}$ are linearly independent. 
For all large $x$ we have that $\mathcal L_{SS} $ behaves like $ \mathcal{L}_{SS,0}$ in \eqref{L0}, which determines the large $x$ 
behavior of solutions of $\mathcal L_{SS}[z]=0$. Fortunately, $\mathcal{L}_{SS,0}$ is a diagonal operator with the same components 
(this is not the case in \cite{AMP1}), so we only need to consider the first one, the second one being identical since it corresponds
to the complex conjugate. As in \cite{AM}, since  $\mathcal{L}_{SS,0}$ has constant coefficients, we have that such additional element 
of the kernel $z$ must have the large $x$ behavior
\begin{equation}\label{decaimiento_L0}
z(x) \sim e^{\pm x \pm i\al}.
\end{equation}
Among these, there are only two linearly independent possible behaviors as $x\to\infty$ representing localized data: $e^{-x}\sin(\al x)$ and $e^{-x}\cos(\al x)$, the same number as the set $\{B_1,B_2\}$. This implies that $\dim\ker\mathcal L_{SS} \leq 2$, a contradiction. This proves the result.
\end{proof}

%

\begin{lem}[Existence of negative directions]\label{negative}
Let $B=B_{SS}$ be a SS breather as in \eqref{R}, and $\mathcal L_{SS}$ be the linear operator in \eqref{perEcBp_1}. Then we have
\be\label{DaB}
\mathcal L_{SS} \partial_{\al} B= -4\al  (B_{xx} + 4 |B|^2 B) - 4\al(\al^2+\bt^2) B, 
\ee
and
\be\label{DbB}
\mathcal L_{SS} \partial_{\beta} B= 4\bt  (B_{xx} + 4 |B|^2 B) - 4\beta(\al^2+\bt^2) B.
\ee
Additionally, we have
\be\label{DaB_DbB}
\mathcal L_{SS}\left(B_0 \right) =  -B,\qquad B_0:= \frac{ \beta \partial_{\al} B +\alpha \partial_{\beta} B}{ 8\al \bt (\al^2+\bt^2)},
\ee
and
\be\label{L_DaB_DbB}
\re \int  \overline{\left( \beta \partial_{\al} B +\alpha \partial_{\beta} B \right) }\mathcal L_{SS}\left( \beta \partial_{\al} B +\alpha \partial_{\beta} B \right) = - 4\al^2 \bt (\al^2+\bt^2) \int |Q|^2<0. 
\ee
\end{lem}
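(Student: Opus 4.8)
The plan is to obtain the four identities by differentiating the elliptic equation \eqref{perEcBp} with respect to the internal parameters $\al$ and $\bt$, exploiting the fact that $\mathcal L_{SS}$ is precisely the linearization of that equation, in the sense that $\frac{d}{ds}G[B+sz]\big|_{s=0}=\mathcal L_{SS}[z]$, with $G$ the left-hand side of \eqref{perEcBp} displayed in \eqref{perEcBp_0} and $B,\bar B$ treated as independent. Since \eqref{perEcBp} holds identically for every $\al,\bt>0$, I would write its left-hand side as $G[B]=\mathcal P[B]-2(\bt^2-\al^2)(B_{xx}+4|B|^2B)+(\al^2+\bt^2)^2B$, where $\mathcal P[B]$ collects the terms with no explicit dependence on $\al,\bt$, and then apply $\partial_\al$ and $\partial_\bt$. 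For $\partial_\al$ the $B$-dependence produces $\mathcal L_{SS}[\partial_\al B]$, while the explicit coefficients contribute $\partial_\al(-2(\bt^2-\al^2))=4\al$ and $\partial_\al((\al^2+\bt^2)^2)=4\al(\al^2+\bt^2)$; setting the total equal to zero gives \eqref{DaB}. The same computation with $\partial_\bt$, using $\partial_\bt(-2(\bt^2-\al^2))=-4\bt$ and $\partial_\bt((\al^2+\bt^2)^2)=4\bt(\al^2+\bt^2)$, gives \eqref{DbB}.

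Identity \eqref{DaB_DbB} will then follow by pure linear algebra. Forming $\bt\cdot\eqref{DaB}+\al\cdot\eqref{DbB}$, the two contributions to the term $(B_{xx}+4|B|^2B)$ are $-4\al\bt$ and $+4\al\bt$ and hence cancel, whereas the two multiples of $B$ add up to $-8\al\bt(\al^2+\bt^2)B$. This yields $\mathcal L_{SS}(\bt\partial_\al B+\al\partial_\bt B)=-8\al\bt(\al^2+\bt^2)B$, and dividing by $8\al\bt(\al^2+\bt^2)$ gives exactly $\mathcal L_{SS}(B_0)=-B$ with $B_0$ as defined.

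For \eqref{L_DaB_DbB}, writing $V:=\bt\partial_\al B+\al\partial_\bt B$ and pairing the identity $\mathcal L_{SS}V=-8\al\bt(\al^2+\bt^2)B$ against $\bar V$, I would reduce the quadratic form to $\re\int\bar V\,\mathcal L_{SS}V=-8\al\bt(\al^2+\bt^2)\,\re\int\bar V\,B$. The remaining scalar $\re\int\bar V B=\bt\,\re\int\overline{\partial_\al B}\,B+\al\,\re\int\overline{\partial_\bt B}\,B$ I would evaluate through the conserved mass: since $\re\int\overline{\partial_p B}\,B=\tfrac12\,\partial_p\!\int|B|^2=\tfrac12\,\partial_p M_{SS}[B]$, and since the non-decaying phase factor $e^{i\Theta}$ contributes only an imaginary multiple of a real integral, which vanishes upon taking the real part, only the variation of the profile $|Q_\bt|^2$ survives. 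Inserting the explicit breather \eqref{SolQ}-\eqref{speeds} and using the algebraic identity $\re\eta=|\eta|^2=\al^2/(\al^2+\bt^2)$ to simplify $|Q|^2$, the resulting integral can be computed (e.g.\ via the substitution $u=e^{2x}$) and shown to be strictly positive, which makes the whole expression negative and produces the constant $-4\al^2\bt(\al^2+\bt^2)\int|Q|^2$.

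I expect the main obstacle to be this last step, namely differentiating the conserved mass in the breather parameters and carrying out the explicit profile integral together with its sign. The two differentiation identities \eqref{DaB}-\eqref{DbB} are mechanical once one is careful to separate the variation of $B$ from the explicit $\al,\bt$-dependence of the coefficients, and \eqref{DaB_DbB} is then immediate; but confirming $\re\int\bar V B>0$, equivalently the sign of the relevant parameter-derivatives of $M_{SS}$, genuinely requires the concrete structure of $Q$ and is the delicate point, especially in the double-humped regime where $|Q|$ is not monotone.
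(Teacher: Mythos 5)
Your proposal is correct and follows essentially the same route as the paper: the paper obtains \eqref{DaB}--\eqref{DbB} by differentiating \eqref{perEcBp} in $\al$ and $\bt$ (only the explicit coefficients $m_{SS}=-2(\bt^2-\al^2)$, $n_{SS}=(\al^2+\bt^2)^2$ contribute beyond $\mathcal L_{SS}[\partial_p B]$), takes the same linear combination for \eqref{DaB_DbB}, and evaluates the quadratic form via $\re\int\overline{\partial_p B}\,B=\tfrac12\partial_p\int|B|^2$ exactly as you do. The only point where your emphasis slightly misplaces the difficulty is the sign: once one checks the explicit identity $\int|B|^2=\bt\int|Q_\eta|^2$ with $\int|Q_\eta|^2=2$ independent of $\eta$ (the computation via $u=e^{2x}$ and $\re\eta=|\eta|^2$ you indicate), the combination $\bt\,\partial_\al\!\int|B|^2+\al\,\partial_\bt\!\int|B|^2$ collapses to $\al\int|Q|^2$, so negativity is automatic in all regimes, double-humped included.
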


\begin{rem}
Lemma \ref{negative} shows that $\beta \partial_{\al} B +\alpha \partial_{\beta} B$ is a negative direction for the functional $\mathcal L_{SS}$. 
\end{rem}

\begin{proof}
The proofs of  \eqref{DaB} and \eqref{DbB} are direct from \eqref{perEcBp}. 
The proof of \eqref{DaB_DbB} follows from \eqref{DaB} and \eqref{DbB}. Finally, from \eqref{DaB_DbB}, \eqref{R}, \eqref{SolQ} and \eqref{massBreSS},
\[
\begin{aligned}
& \re \int  \overline{\left( \beta \partial_{\al} B +\alpha \partial_{\beta} B \right) } \mathcal L_{SS}\left( \beta \partial_{\al} B +\alpha \partial_{\beta} B \right) \\
 &\qquad = ~{}  - 8\al \bt (\al^2+\bt^2)\re  \int \overline{\left( \beta \partial_{\al} B +\alpha \partial_{\beta} B \right)} B\\
& \qquad= ~{} - 4\al \bt (\al^2+\bt^2) \left( \beta  \partial_{\al} \int  |B|^2  +\alpha \partial_{\beta} \int |B|^2 \right) \\
&\qquad= ~{} - 4\al^2 \bt (\al^2+\bt^2) \int |Q|^2.
\end{aligned}
\]
Since from \eqref{massSolSS},  the last integral is a  positive constant, this proves \eqref{L_DaB_DbB}.
\end{proof}

It turns out that the most important consequence of the previous result is the fact that $\mathcal L_{SS}$ possesses \emph{only one negative eigenvalue}. Indeed, in order to prove that result, we follow the Greenberg and Maddocks-Sachs strategy \cite{Gr,MS}, applied this time to the linear operator $\mathcal L_{SS}$. This time, we need some important changes.

\begin{lem}[Uniqueness criterium, see also \cite{Gr,MS}]\label{Wr1} Let $B=B_{SS}$ be any SS breather \eqref{R}, and let 
$B_1=\partial_{x_1}B$, $B_2=\partial_{x_2}B$ be the corresponding kernel of the operator $\mathcal L_{SS}$. Then $\mathcal L_{SS}$ has 
\[
\sum_{x\in \R} \dim   \left( \ker W[B_1, B_2] \cap \ker  W[\partial_x B_1, \partial_x B_2] \right)(x)
\] 
negative eigenvalues, counting multiplicity. Here,  $W[A_1, A_2]$ is the Wronskian matrix of the functions $A_1$ and $A_2$,
\be\label{WM}
W[A_1, A_2] (x) := \left[ \begin{array}{cc} A_1 &A_2 \\  \overline{A_1} &  \overline{A_2}  \end{array} \right] (x).
\ee
\end{lem}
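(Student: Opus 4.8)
The plan is to obtain the stated identity as an instance of the oscillation (conjugate point) theory of Greenberg \cite{Gr} and Maddocks--Sachs \cite{MS}, adapted to the complex, vector-valued operator $\mathcal L_{SS}$ in the same spirit as the real mKdV computation of \cite{AM}.

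I would first fix the spectral framework. By Lemma \ref{Cont_Spec} the essential spectrum of $\mathcal L_{SS}$ is a strictly positive half-line, so below its bottom the spectrum is purely discrete with finite multiplicity; in particular there are only finitely many negative eigenvalues, and counting them is meaningful. Treating $z$ and $\bar z$ as independent unknowns, $\mathcal L_{SS}z=0$ is a self-adjoint fourth-order complex system, and by Lemma \ref{Nondege} its globally decaying solutions are exactly $\spawn\{\partial_{x_1}B,\partial_{x_2}B\}$. These two solutions, which decay exponentially at both ends because $Q$ does, provide the distinguished (principal, self-conjugate) frame on which the oscillation theory is built.

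The counting itself I would run as a homotopy in a spectral parameter. For $\la\in\R$ consider $\mathcal L_{SS}-\la$: when $\la\to-\infty$ the operator is positive definite and carries no conjugate points, while each time $\la$ increases past an eigenvalue of $\mathcal L_{SS}$ a conjugate point is created; the crossing number is monotone in $\la$, so the number of negative eigenvalues equals the number of conjugate points of the frame $\{\partial_{x_1}B,\partial_{x_2}B\}$ at $\la=0$, counted with multiplicity. It then remains to read off conjugate points from the Wronskian \eqref{WM}. A point $x$ is conjugate precisely when some nontrivial combination $c_1\partial_{x_1}B+c_2\partial_{x_2}B$ has a double zero there, i.e. the combination and its first derivative both vanish; since we must track the conjugated component as well, this is exactly the condition $(c_1,c_2)\in\ker W[\partial_{x_1}B,\partial_{x_2}B](x)\cap\ker W[\partial_xB_1,\partial_xB_2](x)$, with multiplicity equal to the dimension of the intersection. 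Summing over $x$ then yields the claimed formula.

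The main obstacle will be legitimizing the oscillation theorem in this Hermitian $2\times2$ setting rather than the classical real scalar one. Concretely, I would have to check that $\spawn\{\partial_{x_1}B,\partial_{x_2}B\}$ is genuinely a Lagrangian (self-conjugate) subspace for the symplectic form attached to $\mathcal L_{SS}$, that conjugate points neither accumulate nor escape to $\pm\infty$ (which uses the exponential decay of $Q$ together with the positive spectral threshold of Lemma \ref{Cont_Spec}), and that the crossing number depends monotonically on $\la$. As emphasized in the remark following the statement, the decisive simplification is that the lower component of $\mathcal L_{SS}z=0$ is the complex conjugate of the upper one, so the frame and its conjugate close up precisely into the $2\times2$ Wronskian \eqref{WM}; this is what replaces the scalar Wronskian of \cite{AM} and makes the criterion well posed.
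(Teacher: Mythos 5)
Your proposal is correct in outline and rests on the same Greenberg--Maddocks--Sachs oscillation theory as the paper, but the homotopy is run in the opposite variable. The paper's proof never invokes the Maslov-index picture in $\la$: instead it slides the \emph{boundary point}, introducing the clamped half-line problems $\mathcal L_{SS,\theta}$ on $H_\theta=\{z\in H^4((-\infty,\theta),\Com):z(\theta)=z_x(\theta)=0\}$ and tracking the eigenvalue branches $\la_j(\theta)$, which are continuous and strictly decreasing in $\theta$ by standard min--max/domain-monotonicity arguments; each negative eigenvalue $\la_j(+\infty)<0$ of $\mathcal L_{SS}$ then produces exactly one $\theta_j$ with $\la_j(\theta_j)=0$, and a zero eigenvalue of $\mathcal L_{SS,\theta}$ is characterized by a combination $c_1\partial_{x_1}B+c_2\partial_{x_2}B$ lying in $H_\theta$, i.e.\ precisely the two Wronskian conditions \eqref{49}--\eqref{48}. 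Your $\la$-homotopy reaches the same count but at a price you correctly flag: one must set up the symplectic structure, verify that $\spawn\{\partial_{x_1}B,\partial_{x_2}B\}$ is a Lagrangian frame, and prove monotonicity of the crossing number in $\la$ together with non-accumulation of conjugate points --- all nontrivial for a coupled $2\times2$ fourth-order system. The paper's $\theta$-parametrization sidesteps this machinery entirely, replacing it by elementary monotonicity of clamped Dirichlet eigenvalues, which is why it adapts cleanly from Greenberg's finite-interval theorem to the line (as in \cite{MS,HPZ}). Two further points where the paper is more careful than your sketch: the frame property needed at a candidate conjugate point $\theta$ is not the global nondegeneracy of Lemma~\ref{Nondege} (decay at \emph{both} ends) but the statement that every solution of $\mathcal L_{SS}Z=0$ independent of $\partial_{x_1}B,\partial_{x_2}B$ grows exponentially as $x\to-\infty$, so that membership in $H_\theta$ forces $Z=c_1\partial_{x_1}B+c_2\partial_{x_2}B$; and your appeal to positive essential spectrum for discreteness below zero should be quoted from Lemma~\ref{Cont_Spec} noting that the threshold $\min\{(\al^2+\bt^2)^2,4\al^2\bt^2\}$ is strictly positive in both regimes, so the negative-eigenvalue count is finite for the truncated operators $\mathcal L_{SS,\theta}$ as well. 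With those repairs your route is a legitimate, more general alternative; the paper's is the shorter and more elementary one.
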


\begin{proof}
This result is essentially contained in \cite[Theorem 2.2]{Gr}, where the finite interval case was considered. As shown in several articles (see e.g. \cite{MS,HPZ}), the extension to the real line is direct. Here we need some changes, 
that we sketch below. 

\medskip

Fix $\theta \in \R $. Let us consider  the eigenvalue problem
\be\label{Ltheta}
\mathcal L_{SS} z = \la(\theta) z, \quad z \in H_\theta,
\ee
where
\be\label{H_theta}
H_\theta := \{ z \in H^4((-\infty,\theta),\Com)   \ : \     z(\theta) =z_x(\theta)   =0 \}.
\ee
With a slight abuse of notation we will denote by $\mathcal L_{SS,\theta}$ the unbounded operator $\mathcal L_{SS}$ with domain $H_\theta$ and values in $L^2(\R)$. Clearly for any $\theta \in \R$, $\mathcal L_{SS,\theta}$ is self-adjoint. Moreover, its continuous spectrum is given by $\sigma_c(\mathcal L_{SS})$, see Lemma \ref{Cont_Spec}. Also, for any $\theta \in \R$, $\mathcal L_{SS,\theta}$ is bounded below. 

\medskip

For any $\theta \in \R$, the number of eigenvalues of $\mathcal L_{SS,\theta}$ is nonempty. We define by $n(\theta) \geq 1$ the number of eigenvalues of $\mathcal L_{SS,\theta}$.  Notice that $n(\theta)$ is never zero, since the first eigenvalue $\la_{1}(\theta)$ always exists.


%
%
\medskip

Recall that the set $(\la_j( +\infty))_{j}$ represent the eigenvalues of $\mathcal L_{SS}$ in $\R$. Our objective is to determine the number of indices $j$ such that $\la_j( +\infty)<0$.  We remark that we know that there is at least one and  at most a finite number of negative eigenvalues for $\mathcal L_{SS}$.

\medskip

Let  $\theta \in \R$ and $\la_1(\theta)\leq \la_2(\theta) \leq \cdots  \leq  \la_{n(\theta)}$, be the  eigenvalues of $\mathcal L_{SS,\theta}$, counted as many times according to their multiplicity. Note that $n(\theta)$ may vary but it is always finite and $\geq 1$. Moreover, by standard spectral arguments, the $\la_j(\theta)$ are {\bf continuous} and {\bf strictly} decreasing functions of $\theta$, with $\la_j(\theta) \geq \la_j( +\infty)$.

\medskip

Fix now $j \in \{1,\ldots, n(\theta) \}$. There is \emph{at most} one $\theta_j$ such that $\la_j(\theta_j) = 0$, and such $\theta_j$ exists if and only if $\la_j(+\infty)<0$.  Since the set of eigenvalues $\{\la_k(+\infty) : \la_k(+\infty) <0\}$ is finite and nonempty,  we conclude the number of negative eigenvalues of $\mathcal L_{SS}$ equals the number of points $\theta $ such that $\la_j(\theta) = 0$, where $j \in \{1,\ldots, n(\theta) \}$. 
And  for fixed $\theta \in \R$, the multiplicity of $0$ as an eigenvalue of $\mathcal L_{SS,\theta}$ is equal to the number of indices $j$ such that $ \la_j (\theta) =0$.

\medskip

Now, let us characterize 0 as an eigenvalue of $\mathcal L_{SS,\theta}$. Indeed, we have that $0$ is an eigenvalue of $\mathcal L_{SS,\theta}$ if and only if there are constants $c_1,c_2 \in \Com$, not all zero, such that 
\[
Z(x):= c_1 B_1(x) + c_2 B_2(x),  
\]
is nontrivial and belongs to $H_\theta$ (note that any other linearly independent element of the vector space $\mathcal L_{SS}Z=0$ is exponentially increasing as $x\to -\infty$, see \eqref{decaimiento_L0}). 
Consequently, one has
\[
\mathcal L_{SS} \left(\begin{array}{c} Z \\ \overline{Z} \end{array}  \right) = \left(\begin{array}{c} 0 \\ 0\end{array}  \right),
\]
with $\mathcal L_{SS}$ the matrix operator in \eqref{matrix_L}. By Lemma \ref{Nondege}, we have for all $x\in\R$,
\[
\left(\begin{array}{c} c_1 B_1 + c_2 B_2 \\ \bar c_1 \overline{B}_1 + \bar c_2 \overline{B}_2 \end{array}  \right) = \tilde c_1  \left(\begin{matrix} B_1 \\ \overline{B}_1 \end{matrix}\right)  + \tilde c_2 \left(\begin{matrix} B_2 \\ \overline{B}_2 \end{matrix}\right),
\]
for some $\tilde c_1, \tilde c_2\in \Com$. We conclude that $\tilde c_1= c_1\in\R$ and $\tilde c_2= c_2\in\R$. Since $Z\in H_\theta$,  for constant $c_1,c_2$ not both equal to zero and real-valued,
\be\label{49}
 \left(\begin{array}{ccc} B_1(\theta) & B_2(\theta) \\  \overline{B_1}(\theta) & \overline{B_2}(\theta) \end{array}  \right) \left(\begin{array}{c} c_1 \\ c_2  \end{array}  \right)  = \left(\begin{array}{c} 0 \\ 0 \end{array}  \right).
\ee
Additionally, taking space derivative and using the definition of $H_\theta$ in \eqref{H_theta} we have
\be\label{48}
 \left(\begin{array}{ccc}  \partial_x B_1(\theta) & \partial_x B_2(\theta) \\ \partial_x \overline{B_1}(\theta) & \partial_x \overline{B_2}(\theta)   \end{array}  \right) \left(\begin{array}{c} c_1 \\ c_2  \end{array}  \right)  = \left(\begin{array}{c} 0 \\ 0 \end{array}  \right).
\ee
Summing on $x\in\R$ we  conclude.  Notice that the sum is indeed finite, because of the finite number of negative eigenvalues of $\mathcal L_{SS}$.
\end{proof}

In what follows, we compute the Wronskian \eqref{49} in the explicit SS case. We easily have from \eqref{R},
\be\label{Wsimpl}
\det W[B_1, B_2](x) =  \det\left(\begin{array}{ccc} i\al Q_\eta e^{i\Theta} & Q_\eta' e^{i\Theta} \\  -i\al \overline{Q_\eta }e^{-i\Theta} & \overline{Q_\eta'} e^{-i\Theta} \end{array}  \right) =2i\al \re \left\{ \overline{Q}_\eta Q_\eta' \right\}.
\ee
We have, for $\eta=: a+ib = \frac{\al^2}{\al^2+\beta^2} -\frac{\al\bt i}{\al^2+\beta^2}$, and $u:=e^{2x}>0$,
\[
\re \left\{ \overline{Q}_\eta Q_\eta' \right\} =\frac{u \left(a^4+4 a^3 u -2 a^2 \left(u-b^2\right)-4 a u\left(u^2-b^2\right)+b^4-2 b^2 u+2 u^3 -u^4\right)}{\left(a^2+b^2+2u+u^2\right)^3}.
\]
Let us find a positive root $u$ for the term in the numerator above. First of all, we have
\be\label{polU4}
\begin{aligned}
& a^4+4 a^3 u -2 a^2 \left(u-b^2\right)-4 a u\left(u^2-b^2\right)+b^4-2 b^2 u+2 u^3 -u^4 \\
& \qquad = \frac{\al^4 (u-1) (u+1)^3+2 \al^2 \bt^2 u \left(u^3+1\right)+\bt^4 (u-2) u^3}{\left(\al^2+\bt^2\right)^2}.
\end{aligned}
\ee
The solutions to this equation equals zero (which will imply infinitely many $c_1,c_2$ as solutions to \eqref{49}) are
\[
u_{1,\pm}:= \frac{\pm \al}{\sqrt{\al^2+\bt^2}}, \quad u_{2,\pm}:= \frac{\bt^2-\al^2 \pm \beta \sqrt{\bt^2-3\al^2}}{\al^2+\bt^2}.
\]
Clearly $u_{1,-}$ is not a valid solution. Now, if $\bt^2-3\al^2<0$, the only valid positive root is $u_{1,+}=e^{2x}$. It is not difficult to see in this case that 
\[
 \dim \ker W[\partial_{x_1}B, \partial_{x_2}B] \left( \frac12\log u_{1,+} \right)  =1.
\]
Assume now  $\bt^2 > 3\al^2$ in \eqref{R}. We have now at least a second root, $u_{2,+}$, always positive. Additionally, $u_{2,-}>0$ means
\[
\begin{aligned}
(\bt^2-\al^2)^2 > \beta^2 (\bt^2-3\al^2) \iff  &~{} -2\bt^2 \al^2 +\al^4 > -3\al^2 \bt^2\\
 \iff  &~{} \bt^2  +\al^2 > 0,
\end{aligned}
\]
so both  $u_{2,\pm}$ are positive, therefore, three roots are present in this case. In all these cases, $ \dim \ker W[\partial_{x_1}B, \partial_{x_2}B] \left( \frac12\log u_{2,\pm} \right)  =1$.  

\medskip

Now we impose the second condition on the derivatives, i.e. \eqref{48}. Recall that from the previous part, at $u_{1,+}$ and $u_{2,\pm}$ there are infinitely many $c_1,c_2$ solutions of \eqref{49}, essentially a linear subspace of dimension 1. From \eqref{48} we have at $x=\theta$,
\be\label{50}
 \left(\begin{array}{ccc} i\al (Q_\eta' +i\al Q_\eta) e^{i\Theta} & (Q_\eta'' +i\al Q_\eta')e^{i\Theta} \\  -i\al (\overline{Q_\eta'}-i\al \overline{Q_\eta})e^{-i\Theta} & (\overline{Q_\eta''} - i\al \overline{Q_\eta'})e^{-i\Theta} \end{array}  \right)\left(\begin{array}{c}  c_1 \\ c_2  \end{array}  \right)  = \left(\begin{array}{c} 0 \\ 0 \end{array}  \right).
 \ee
A necessary condition to satisfy the previous equation with $c_1,c_2$ not both zero is that at $x=\theta$ we have
\[
  (Q_\eta' +i\al Q_\eta) (\overline{Q_\eta''} - i\al \overline{Q_\eta'}) + (Q_\eta'' +i\al Q_\eta') (\overline{Q_\eta'}-i\al\overline{Q_\eta}) =0.
\]
The previous identity simplifies to
\[
  2\re\{ Q_\eta'' \overline{Q_\eta'} \}  + 2\al  \ima \{Q_\eta'' \overline{Q_\eta}\} +2\al^2 \re \{ Q_\eta'\overline{Q_\eta}\}=0.
\]
From \eqref{Wsimpl} we have the last term in the previous identity equals zero. 
On the other hand, after some computations (see Appendix \ref{Calculos00}), one has
\be\label{reimQu1}
\re\{ Q_\eta'' \overline{Q_\eta'} \}\Big|_{x=\frac12\log u_{1,+}} =\ima \{Q_\eta'' \overline{Q_\eta}\}\Big|_{x=\frac12\log u_{1,+}}=0.
\ee
 However, 
\be\label{reimQu2}
\re\{ Q_\eta'' \overline{Q_\eta'} \} \left( \frac12\log u_{2,\pm}\right)   + \al  \ima \{Q_\eta'' \overline{Q_\eta}\} \left( \frac12\log u_{2,\pm}\right) \neq 0.
\ee

 For explicit computations of these real and imaginary parts, see Appendix \ref{Calculos00}.

\medskip

 Finally, we consider the degenerate case $\bt^2 = 3\al^2$. In this case $u_{1,+}=u_{2,\pm} = \frac12$ is the only (triple) root. We have from \eqref{speeds} $\eta= \frac14(1-i\sqrt{3})$ and we easily have 
\[
 \dim \ker W[B_1,B_2] \left( \frac12\log \frac12 \right)  =1, \quad  \dim \ker W[ \partial_x B_1, \partial_x B_2] \left( \frac12\log \frac12 \right)  =1,
\]
because at least one element in the matrix is nonzero. This concludes the proof.

\medskip

The following result summarizes our findings:

\begin{lem}[Negative eigenvalues of $\mathcal L_{SS}$]
Let $B=B_{SS}$ be a SS breather with parameters $\al,\bt>0$, and let $\mathcal L_{SS}$ be the associated linearized operator \eqref{perEcBp_1}. Then  $\mathcal L_{SS}$ has always only one negative eigenvalue.
\end{lem}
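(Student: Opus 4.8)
The plan is to count the negative eigenvalues of $\mathcal{L}_{SS}$ by invoking the Greenberg--Maddocks--Sachs criterion of Lemma \ref{Wr1}. Since Lemma \ref{Cont_Spec} gives a strictly positive continuous spectrum, the negative spectrum consists of finitely many isolated eigenvalues, so the count is meaningful. Lemma \ref{Wr1} reduces the problem to counting the points $\theta\in\R$ at which $0$ enters the spectrum of the restricted operator $\mathcal{L}_{SS,\theta}$, equivalently the points $x$ where the two Wronskian conditions \eqref{49} and \eqref{48} admit a common nontrivial solution $(c_1,c_2)$; the total number of negative eigenvalues is $\sum_{x\in\R}\dim(\ker W[\partial_{x_1}B,\partial_{x_2}B]\cap\ker W[\partial_x B_1,\partial_x B_2])(x)$. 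First I would record the lower bound: Lemma \ref{negative}, via \eqref{L_DaB_DbB}, already exhibits the explicit negative direction $\beta\partial_\alpha B+\alpha\partial_\beta B$, so $\mathcal{L}_{SS}$ has \emph{at least} one negative eigenvalue, and it remains to show there are no others.

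For the upper bound I would exploit the explicit determinant \eqref{Wsimpl}. Writing $u=e^{2x}>0$, the vanishing of $\det W[\partial_{x_1}B,\partial_{x_2}B]$ reduces to a quartic in $u$ whose only admissible positive roots are $u_{1,+}$ and, in the regime $\beta^2\ge 3\alpha^2$, also $u_{2,\pm}$. At each such root the first Wronskian drops rank by exactly one, so the solution space of \eqref{49} is one-dimensional there and empty elsewhere. Hence the number of negative eigenvalues is bounded above by the number of these roots at which the single admissible $(c_1,c_2)$ \emph{also} satisfies the derivative condition \eqref{48}.

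The decisive step is to test \eqref{48} at each root through the scalar necessary condition $\re\{Q_\eta''\overline{Q_\eta'}\}+\alpha\,\ima\{Q_\eta''\overline{Q_\eta}\}+\alpha^2\re\{Q_\eta'\overline{Q_\eta}\}=0$ extracted from \eqref{50}. The last term vanishes at every root by \eqref{Wsimpl}; at $u_{1,+}$ the remaining two quantities also vanish, so \eqref{48} is compatible and this point contributes $1$ to the sum, whereas at $u_{2,\pm}$ both $\re\{Q_\eta''\overline{Q_\eta'}\}$ and $\ima\{Q_\eta''\overline{Q_\eta}\}$ are nonzero, so the intersection of the two kernels is trivial and these points contribute $0$. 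Summing over $x\in\R$ yields exactly one negative eigenvalue, matching the lower bound, and the conclusion holds uniformly across both the single-humped regime ($\beta^2<3\alpha^2$, only $u_{1,+}$ present) and the double-humped regime ($\beta^2\ge 3\alpha^2$, all three roots present).

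The main obstacle I anticipate is making the evaluation at $u_{2,\pm}$ fully rigorous: the preceding computation verifies $\re\{Q_\eta''\overline{Q_\eta'}\}\neq 0$ and $\ima\{Q_\eta''\overline{Q_\eta}\}\neq 0$ numerically, whereas a clean argument would substitute the closed form \eqref{SolQ} of $Q=Q_\eta$ and the explicit roots $u_{2,\pm}$ into these expressions and show that the resulting rational functions of $(\alpha,\beta)$ have no common zero for $\alpha,\beta>0$ with $\beta^2\ge 3\alpha^2$. A secondary point I would confirm carefully is that the kernel intersection at $u_{1,+}$ is exactly one-dimensional, so that it contributes multiplicity one and not more; this follows because the rank of $W[\partial_{x_1}B,\partial_{x_2}B]$ drops by precisely one at $u_{1,+}$.
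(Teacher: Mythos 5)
Your proposal is correct and follows essentially the same route as the paper's own proof: the Greenberg--Maddocks--Sachs counting criterion of Lemma~\ref{Wr1}, the explicit Wronskian \eqref{Wsimpl} in the variable $u=e^{2x}$ with admissible roots $u_{1,+}$ and (when $\bt^2\geq 3\al^2$) $u_{2,\pm}$, and the derivative condition \eqref{50} which holds at $u_{1,+}$ but fails at $u_{2,\pm}$, yielding exactly one negative eigenvalue in both the single- and double-humped regimes. Even your caveat about rigorizing the nonvanishing of $\re\{Q_\eta''\overline{Q_\eta'}\}$ and $\ima\{Q_\eta''\overline{Q_\eta}\}$ at $u_{2,\pm}$ matches the paper, which likewise verifies this step only numerically.
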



\medskip

In what follows, we define as $B_{-1}$ the unique eigenfunction associated to the unique negative eigenvalue, such that $\|B_{-1}\|_{L^2}=1$. We have

\begin{prop}[Coercivity]\label{PropOrtog} Let $B=B_{SS}$ be a Sasa-Satsuma breather, and $\partial_{x_1}B, \partial_{x_2}B$ the corresponding kernel of the associated operator $\mathcal L_{SS}$. There exists $\mu_0>0$, depending on $\al,\bt$ only, such that, for any  $ z\in H^2(\R)$ satisfying
\be\label{OrthoK}
\re\int \overline{\partial_{x_1}B}  z =\re \int \overline{\partial_{x_2}B} z =0,
\ee
one has
\be\label{Coer}
\re \int \overline{z}\mathcal L_{SS} z \geq \mu_0\| z\|_{H^2(\R)}^2 -\frac{1}{\mu_0}\Big(\re \int  z \overline{B} \Big)^2.
\ee
\end{prop}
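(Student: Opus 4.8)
The plan is to follow the spectral/variational strategy of \cite{AM}, now adapted to the matrix operator $\mathcal L_{SS}$. First I would assemble the spectral picture already established: by Lemma \ref{Cont_Spec} the operator $\mathcal L_{SS}$ is self-adjoint and bounded below, with continuous spectrum contained in $[c_0,+\infty)$ where $c_0:=\min\{(\al^2+\bt^2)^2,\, 4\al^2\bt^2\}>0$; by Lemma \ref{Nondege} its kernel is exactly $\spawn\{\partial_{x_1}B,\partial_{x_2}B\}$; and by the preceding lemma it has exactly one negative eigenvalue $\la_{-1}<0$, which is simple, with normalized eigenfunction $B_{-1}$. Since eigenfunctions of a self-adjoint operator attached to distinct eigenvalues are orthogonal, $B_{-1}$ is orthogonal to $\ker\mathcal L_{SS}$, and the rest of the spectrum lies in $[\nu_0,+\infty)$ for some $\nu_0>0$. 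Throughout I write $\langle u,v\rangle:=\re\int u\bar v$, so that \eqref{OrthoK} reads $\langle z,\partial_{x_1}B\rangle=\langle z,\partial_{x_2}B\rangle=0$.

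The crucial extra ingredient is a slope (Vakhitov--Kolokolov type) condition. By \eqref{DaB_DbB} one has $\mathcal L_{SS}B_0=-B$, hence $\re\int\overline{B_0}\,\mathcal L_{SS}B_0=-\re\int\overline{B_0}\,B$; on the other hand this quantity differs from the left-hand side of \eqref{L_DaB_DbB} only by a positive multiplicative factor, so it is strictly negative. Consequently
\[
\langle \mathcal L_{SS}^{-1}B,\,B\rangle=-\re\int\overline{B_0}\,B<0,
\]
which is precisely the condition that the scalar constraint $\langle z,B\rangle=0$ "uses up" the single negative direction.

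With these ingredients I would argue in three steps. \emph{Step 1 ($L^2$ coercivity on the constrained space).} For $z$ satisfying \eqref{OrthoK} and, in addition, $\langle z,B\rangle=0$, I claim there is $\nu_1>0$ with $\langle\mathcal L_{SS}z,z\rangle\ge\nu_1\|z\|_{L^2}^2$. This is the heart of the matter, and I would obtain it through the constrained-minimization argument of Grillakis--Shatah--Strauss/Weinstein: minimize $\langle\mathcal L_{SS}z,z\rangle$ over $\{z\perp\ker\mathcal L_{SS},\ \langle z,B\rangle=0,\ \|z\|_{L^2}=1\}$, analyze the Euler--Lagrange relation $\mathcal L_{SS}z=\mu z+\theta B$, and use the slope condition $\langle\mathcal L_{SS}^{-1}B,B\rangle<0$ together with the simplicity of $\la_{-1}$ to force $\mu\ge0$, whence positivity. \emph{Step 2 (removing the constraint, producing the penalty).} For general $z$ obeying only \eqref{OrthoK}, I would decompose $z=z^\sharp+s\,B_0^\perp$, where $B_0^\perp$ is the projection of $B_0$ onto $(\ker\mathcal L_{SS})^\perp$ (which satisfies $\langle B_0^\perp,B\rangle\neq0$ by the strict inequality in \eqref{L_DaB_DbB}), and $s$ is chosen so that $\langle z^\sharp,B\rangle=0$; expanding $\langle\mathcal L_{SS}z,z\rangle$, applying Step 1 to $z^\sharp$, and controlling the cross and diagonal terms by Cauchy--Schwarz yields exactly a term $\sim-(\langle z,B\rangle)^2=-(\re\int z\bar B)^2$. \emph{Step 3 (upgrade to $H^2$).} Since by \eqref{perEcBp_1} the leading part of $\mathcal L_{SS}$ is $\partial_x^4$, integration by parts and interpolation give a G{\aa}rding inequality $\langle\mathcal L_{SS}z,z\rangle\ge\tfrac12\|z\|_{H^2}^2-C\|z\|_{L^2}^2$; combining this with the $L^2$ lower bound from Steps 1--2 and readjusting the constant produces \eqref{Coer}.

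The main obstacle is Step 1: turning "one negative eigenvalue plus a favorable slope condition" into genuine positivity on the codimension-one constrained subspace. In the scalar case this is classical, but here $\mathcal L_{SS}$ is a $2\times2$ matrix operator coupling $z$ and $\bar z$, so I must verify that the min-max/Lagrange-multiplier argument and the orthogonality bookkeeping (in the real pairing $\langle\cdot,\cdot\rangle$) survive for the vector-valued form. The self-adjointness from Lemma \ref{Cont_Spec} and the conjugation symmetry $\mathcal L_{SS,3}=\overline{\mathcal L}_{SS,2}$, $\mathcal L_{SS,4}=\overline{\mathcal L}_{SS,1}$ are exactly what make this feasible. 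A secondary technical point is ensuring the decomposition in Step 2 is non-degenerate, i.e. $\langle B_0^\perp,B\rangle\neq0$, which again is guaranteed by the strict negativity in \eqref{L_DaB_DbB}.
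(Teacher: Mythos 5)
Your proof is correct, and it rests on exactly the same spectral inputs as the paper's (Lemmas \ref{Cont_Spec} and \ref{Nondege}, the uniqueness of the negative eigenvalue, and Lemma \ref{negative}, whose consequence $\mathcal L_{SS}B_0=-B$ together with \eqref{L_DaB_DbB} gives $\re\int \overline{B_0}B>0$, i.e. your slope condition $\langle \mathcal L_{SS}^{-1}B,B\rangle<0$), but the core positivity step is executed by a genuinely different mechanism. You run the classical Grillakis--Shatah--Strauss/Weinstein constrained minimization: an Euler--Lagrange relation $\mathcal L_{SS}z=\mu z+\theta B$ on the constrained sphere, with the slope condition forcing $\mu\ge 0$. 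The paper instead argues by hand: it decomposes $z=\tilde z+mB_{-1}$ and $B_0=b_0+nB_{-1}+p_1B_1+p_2B_2$ along the negative eigenfunction and the kernel, derives the closed-form identity \eqref{deco}, and then bounds the subtracted quotient by $a\,\re\int\overline{\tilde z}\,\mathcal L_{SS}\tilde z$ with $0<a<1$ via Cauchy--Schwarz for the inner product induced by $\mathcal L_{SS}$ on the positive subspace --- the Maddocks--Sachs scheme of \cite{MS,AM}, which avoids any abstract min-max lemma. Your version is more modular and is actually more explicit at the two points where the paper is terse: your Step 2 (decomposing $z=z^\sharp+sB_0^\perp$) genuinely manufactures the penalty term $-\mu_0^{-1}\big(\re\int z\overline B\big)^2$, whereas the paper merely asserts that it suffices to treat the case $\re\int z\overline B=0$; and your Step 3 spells out the G\aa rding/interpolation upgrade from the quadratic-form bound to the $H^2$ norm, which the paper compresses into its final chain of inequalities. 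Two minor remarks: in your Step 2 the cross term vanishes identically, since $\mathcal L_{SS}B_0^\perp=-B$ is orthogonal to $z^\sharp$ by the very choice of $s$, so no Cauchy--Schwarz estimate is needed there; and the concern you flag about the $2\times 2$ matrix structure is resolved exactly as you anticipate, since $\mathcal L_{SS}$ is Hermitian for the real pairing $\re\int u\bar v$ (as the paper notes just before Lemma \ref{Cont_Spec}), so the multiplier and orthogonality bookkeeping carries over from the scalar case unchanged.
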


\begin{proof}
For the sake of simplicity, we denote $B_j:= \partial_{x_j}B$.  Indeed, it is enough to prove that, under the conditions (\ref{OrthoK}) and the additional orthogonality condition $\re \int  z \bar B =0$, one has
\[
\re \int \overline{z}\mathcal L_{SS} z \geq \mu_0\| z\|_{H^2(\R)}^2.
\]
Indeed, note that from (\ref{DaB_DbB}), the function $B_0$ satisfies $\mathcal L_{SS}[B_0] = - B$, and from (\ref{L_DaB_DbB}),
\be\label{posB0}
\re\int \overline{B}_0 B = -\re\int \overline{B}_0 \mathcal L_{SS}[B_0]  > 0.
\ee
The next step is to decompose $z$ and $B_0$ in $\spawn (B_{-1}, B_1,B_2)$ and the corresponding orthogonal subspace. One has
\[
z =\tilde z +  m B_{-1}, \quad  B_0=  b_0 +  n  B_{-1} + p_1 B_1 + p_2 B_2, \quad m,n, p_1,p_2\in \Com,
\]
where 
\[
\begin{aligned}
\re\int  \tilde z  \overline{B}_{-1} =\re\int  \tilde z \overline{B}_1=\re\int  \tilde z \overline{B}_2 =&~{}0, \\
\re\int    b_0 \overline{B}_{-1} =\re\int   b_0 \overline{B}_1=\re\int   b_0 \overline{B}_2 =&~{}0. 
\end{aligned}
\]
Note in addition that 
\[
\re\int  \overline{B}_{-1}B_1 =\re\int  \overline{B}_{-1}B_2 =0.
\]
From here and the previous identities we have
\be\label{Qzz}
\begin{aligned}
\re \int \overline{z}\mathcal L_{SS} z =&~{} \re \int (\mathcal L_{SS} \tilde z - m\la_0^2 B_{-1})\overline{(\tilde z +m B_{-1}) } \\
=&~{} \re \int \overline{\tilde z}\mathcal L_{SS} \tilde z -m^2 \la_0^2. 
\end{aligned}
\ee
Now, since $\mathcal L_{SS}[B_0] =-B$ (see \eqref{DaB_DbB}), one has 
\be\label{zB}
\begin{aligned}
0  =&~{}\re \int  \overline{z} B = -\re \int   \overline{z}\mathcal L_{SS}[B_0]  =\re \int  \mathcal L_{SS}[\tilde z +m B_{-1}]  \overline{B}_0 \\
 = &~{} \re \int  (\mathcal L_{SS}[\tilde z] -m\la_0^2 B_{-1})\overline{( b_0 +n B_{-1} + p_1 B_1 + p_2 B_2)} \\
 =&~{}   \re \int  \mathcal L_{SS}[\tilde z]  \overline{b_0} -mn\la_0^2.
\end{aligned}
\ee
On the other hand, 
\be\label{B0B}
\begin{aligned}
\re \int  \overline{B}_0 B =&~{}   -\re \int  \overline{B}_0 \mathcal L_{SS}[B_0]  \\
= &~{} - \re \int  \overline{( b_0 + n B_{-1})} (\mathcal L_{SS}[b_0] -n\la_0^2 B_{-1}) \\
=&~{} -\re \int    \overline{b}_0 \mathcal L_{SS} b_0 + n^2 \la_0^2. 
\end{aligned}
\ee
Replacing (\ref{zB}) and (\ref{B0B}) into (\ref{Qzz}), we get
\be\label{deco}
\re \int \overline{z}\mathcal L_{SS} z = \re \int \overline{\tilde z}\mathcal L_{SS} \tilde z -\frac{\displaystyle{\Big(\re \int  \mathcal L_{SS}[\tilde z] \overline{b}_0\Big)^2}}{\displaystyle{\re\int  B_0\overline{B} + \re \int \overline{b_0}\mathcal L_{SS} b_0}}.  
\ee
Note that both quantities in the denominator are positive. Additionally, note that if $\tilde z =\la b_0$, with $\la\neq 0$, then
\[
\Big(\re\int  \mathcal L_{SS}[\tilde z] \overline{b}_0 \Big)^2 = \re \int \overline{\tilde z}\mathcal L_{SS} \tilde z ~ \re \int \overline{b_0}\mathcal L_{SS} b_0.
\]
In particular, if $\tilde z =\la b_0$,
\be\label{condit}
\frac{\displaystyle{\Big(\re \int  \mathcal L_{SS}[\tilde z] \overline{b}_0\Big)^2}}{\displaystyle{\re\int  B_0\overline{B} + \re \int \overline{b_0}\mathcal L_{SS} b_0 }} \leq a\,   \re \int \overline{\tilde z}\mathcal L_{SS} \tilde z, \quad 0<a<1.
\ee
In the general case, using the orthogonal decomposition induced by the scalar product $(\mathcal L_{SS} \cdot ,\cdot)_{L^2} $ on $\spawn (B_{-1}, B_1,B_2)$, we get the same conclusion as before. Therefore, we have proved (\ref{condit}) for all possible $\tilde z$. 

\medskip

Finally, replacing in (\ref{deco}) and (\ref{Qzz}), $\re \int \overline{z}\mathcal L_{SS} z \geq (1-a) \re \int \overline{\tilde z}\mathcal L_{SS} \tilde z  \geq 0$,  and $ \re \int \overline{\tilde z}\mathcal L_{SS} \tilde z  \geq m^2 \la_0^2$. We have, for some $C>0$,
\[
\begin{aligned}
\re \int \overline{z}\mathcal L_{SS} z  \geq &~{}  (1-a) \re \int \overline{\tilde z}\mathcal L_{SS} \tilde z   \\
\geq &~{} \frac 12 (1-a) \re \int \overline{\tilde z}\mathcal L_{SS} \tilde z + (1-a)m^2 \la_0^2 \\
\geq &~{}  \frac 1C(2 \|\tilde z\|_{H^2(\R)}^2 +  2m^2 \|B_{-1}\|_{H^2(\R)}^2) \geq  \frac 1C\|z\|_{H^2(\R)}^2.
\end{aligned}
\]
\end{proof}
%
%
%
%
%
%

\subsection{End of proof} We shall prove now the following explicit version of Theorem \ref{TH2}:

\begin{thm}[Explicit nonlinear stability of SS breathers]\label{TH2a} Let $B=B_{SS}$ be a SS breather. 
Assume that $u_0\in H^2(\R;\Com)$ is such that
\[
\| u_0 - B\|_{H^2}<\eta,
\]
for some $\eta$ sufficiently small. Then there exists $K>0$ and shifts $x_1(t),x_2(t)\in\R$ as in \eqref{R} such that 
\[
\sup_{t\in\R }\| u(t) - B(t; x_1(t),x_2(t))\|_{H^2}<K\eta.
\]
Moreover, one has $\sup_{t\in\R } |x_j'(t)| \lesssim K\eta.$
\end{thm}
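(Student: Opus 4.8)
The plan is to run the classical Lyapunov--coercivity orbital stability scheme, exploiting that all the delicate spectral information is already in place. \emph{Modulation.} First I would write, for the global (by the well-posedness Proposition, since $u_0\in H^2$) solution $u(t)$, the decomposition $u(t)=B(t;x_1(t),x_2(t))+z(t)$, fixing the two modulation parameters $x_1(t),x_2(t)$ (the phase and translation in \eqref{R}) by imposing the orthogonality conditions \eqref{OrthoK}, namely $\re\int\overline{\partial_{x_1}B}\,z=\re\int\overline{\partial_{x_2}B}\,z=0$, for all $t$. Since $\partial_{x_1}B,\partial_{x_2}B$ span $\ker\mathcal L_{SS}$ (Lemma \ref{Nondege}) and the corresponding $2\times2$ Gram matrix is invertible for $\|z\|_{H^2}$ small, the implicit function theorem produces such $C^1$ parameters, the decomposition being unique and smooth as long as $z(t)$ stays small; in particular $\|z(0)\|_{H^2}\lesssim\eta$.

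\emph{Energy expansion and conservation.} Because $B$ is a critical point of $\mathcal H_{SS}$ (Theorem \ref{TH1a}), the linear term $\mathcal G_{SS}[z]$ vanishes, so Proposition \ref{Decomposition_Prop} yields
\[
\mathcal H_{SS}[u(t)]=\mathcal H_{SS}[B]+\mathcal Q_{SS}[z(t)]+\mathcal N_{SS}[z(t)],\qquad |\mathcal N_{SS}[z]|\lesssim\|z\|_{H^1}^3 .
\]
As $\mathcal H_{SS}$ is conserved and $\mathcal H_{SS}[B]$ is time independent, comparing $t$ with $t=0$ gives $\mathcal Q_{SS}[z(t)]+\mathcal N_{SS}[z(t)]\lesssim\eta^2$.

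\emph{Coercivity, the bad direction and bootstrap.} The orthogonality \eqref{OrthoK} lets me invoke the coercivity estimate of Proposition \ref{PropOrtog},
\[
\mathcal Q_{SS}[z(t)]\geq\mu_0\|z(t)\|_{H^2}^2-\frac1{\mu_0}\Big(\re\int z(t)\bar B\Big)^2 ,
\]
so only the scalar ``bad'' direction $\re\int z\bar B$ remains to be controlled. For this I would use conservation and shift invariance of the mass $M_{SS}$: since $M_{SS}[B(\cdot;x_1,x_2)]=M_{SS}[B]$, the expansion $M_{SS}[u]=M_{SS}[B]+2\re\int B\bar z+\|z\|_{L^2}^2$ gives $2\re\int B\bar z(t)=(M_{SS}[u_0]-M_{SS}[B])-\|z(t)\|_{L^2}^2$, whence $\big(\re\int z(t)\bar B\big)^2\lesssim\eta^2+\|z(t)\|_{H^2}^4$. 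Combining the three displays and using $|\mathcal N_{SS}[z]|\lesssim\|z\|_{H^2}^3$ yields
\[
\mu_0\|z(t)\|_{H^2}^2\lesssim\eta^2+\|z(t)\|_{H^2}^3+\|z(t)\|_{H^2}^4 ,
\]
and a standard continuity/bootstrap argument (using that $t\mapsto\|z(t)\|_{H^2}$ is continuous with $\|z(0)\|_{H^2}\lesssim\eta$) closes this into $\sup_t\|z(t)\|_{H^2}\leq K\eta$ for $\eta$ small. The velocity bound follows by differentiating \eqref{OrthoK} in $t$, inserting \eqref{SS} for $u_t$, and inverting the uniformly nondegenerate Gram matrix: the resulting right-hand side is linear in $z$, so $\sup_t|x_j'(t)|\lesssim\|z(t)\|_{H^2}\lesssim K\eta$.

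The genuinely spectral difficulty (one negative eigenvalue, trivial kernel, and the resulting coercivity modulo a single direction) is already settled in the preceding lemmas, so the main obstacle here is the careful bookkeeping: tracking the bad $L^2$-direction $\re\int z\bar B$ at the correct order through the mass law, and setting up the modulation rigorously in the complex, vector-valued moving frame so that the shift velocities are truly controlled by $\|z\|_{H^2}$. The single-hump hypothesis $\gamma=3\alpha^2-\beta^2>0$ keeps the geometry of this decomposition transparent, the double-hump case being treated along identical lines.
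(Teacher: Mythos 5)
Your proposal is correct and follows essentially the same route as the paper: modulation via the implicit function theorem enforcing \eqref{OrthoK} (the paper's Lemma \ref{Mod}), the conserved expansion of $\mathcal H_{SS}$ from Proposition \ref{Decomposition_Prop} combined with the coercivity of Proposition \ref{PropOrtog}, control of the single bad direction $\re\int z\bar B$ through conservation of the mass $M_{SS}$, and a continuity argument in time, with the shift-velocity bound obtained by differentiating the orthogonality conditions and inverting the nondegenerate Gram matrix. The only cosmetic difference is that the paper formalizes your bootstrap as a contradiction with a maximal stability time $T^*$ and a large constant $K^*$, which is the same mechanism.
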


We prove the theorem only for positive times, since the negative time case is completely analogous. From the continuity of the SS flow for $H^2(\R)$ data, there exists a time $T_0>0$ and continuous parameters $x_1(t), x_2(t)\in \R$, defined for all $t\in [0, T_0]$, and such that the solution $u(t)$ of the Cauchy problem for the SS equation \eqref{SS}, with initial data $u_0$, satisfies
\be\label{F0}
\sup_{t\in [0, T_0]}\big\| u(t) - B_{SS}(t; x_1(t),x_2(t)) \big\|_{H^2(\R)}\leq 2 \eta.
\ee
The idea is to prove that $T_0 =+\infty$. In order to do this, let $K^*>2$ be a constant, to be fixed later. Let us suppose, by contradiction, that the \emph{maximal time of stability} $T^*$, namely
\begin{align}\label{Te}
T^* &:=   \sup \Big\{ ~T>0 \, \big| \, \hbox{ for all } t\in [0, T], \, \hbox{ there exist } \tilde x_1(t), \tilde x_2(t) \in \R  \hbox{ such that } \nonu \\
&  \qquad \qquad \sup_{t\in [0, T]}\big\| u(t) - B_{SS}(t; \tilde x_1(t), \tilde x_2(t)) \big\|_{H^2(\R)}\leq K^* \eta \Big\},
\end{align}
is finite. It is clear from (\ref{F0}) that $T^*$ is a well-defined quantity. Our idea is to find a suitable contradiction to the assumption $T^*<+\infty.$

\medskip

By taking $\eta_0$ smaller, if necessary, we can apply a well known theory of modulation for the solution $u(t)$.

\begin{lem}[Modulation and orthogonality] \label{Mod} 
Let $B=B_{SS}$ be a SS breather as in \eqref{R}. There exists $\eta_0>0$ such that, for all $\eta\in (0, \eta_0)$, the following holds. There exist $C^1$ functions $x_1(t)$, $x_2(t) \in \R$, defined for all $t\in [0, T^*]$, and such that 
\be\label{z}
z(t) := u(t) - {B} (t), \quad B(t,x) := B(t,x; x_1(t),x_2(t)) 
\ee
satisfies, for $t\in [0, T^*]$,
\be\label{Ortho}
\re \int  \overline{\partial_{x_1} B}(t; x_1(t),x_2(t)) z(t) =\re \int \overline{\partial_{x_2} B} (t; x_1(t),x_2(t)) z(t)=0.
\ee
Moreover, one has
\be\label{apriori}
\|z(t)\|_{H^2(\R)} + |x_1'(t)| + |x_2'(t)| \leq K K^* \eta, \quad \|z(0)\|_{H^2(\R)} \leq K\eta,
\ee
for some constant $K>0$, independent of $K^*$. 
\end{lem}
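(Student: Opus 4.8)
The plan is to construct $x_1(t),x_2(t)$ by the standard modulation argument based on the Implicit Function Theorem, using crucially the nondegeneracy of the kernel established in Lemma \ref{Nondege}. First I would introduce, for parameters $\vec y=(y_1,y_2)\in\R^2$ near the origin and $u$ near the breather manifold, the two real-valued functionals
\[
\mathcal F_j(\vec y,u):= \re\int \overline{\partial_{x_j}B}(\cdot\,;y_1,y_2)\,\big(u - B(\cdot\,;y_1,y_2)\big),\qquad j=1,2.
\]
Solving the orthogonality conditions \eqref{Ortho} amounts to solving $\mathcal F_1=\mathcal F_2=0$ for $\vec y=(x_1(t),x_2(t))$ with $u=u(t)$. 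At $u=B(\cdot\,;\vec y)$ (that is, $z=0$) both functionals vanish identically, so it suffices to invert the map in a neighborhood of this configuration.

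Next I would compute the Jacobian of $(\mathcal F_1,\mathcal F_2)$ with respect to $\vec y$ at $z=0$. Since the factor $u-B$ vanishes there, only the differentiation of $B$ inside the integrand survives, giving
\[
\frac{\partial \mathcal F_j}{\partial y_k}\Big|_{z=0} = -\,\re\int \overline{\partial_{x_j}B}\;\partial_{x_k}B,
\]
which is minus the Gram matrix of $\{\partial_{x_1}B,\partial_{x_2}B\}$ for the real inner product $\langle f,g\rangle=\re\int\bar f g$. By Lemma \ref{Nondege} these two functions are $\R$-linearly independent (they span $\ker\mathcal L_{SS}$), so their Gram matrix is positive definite and invertible, with determinant bounded below by a constant depending only on $\alpha,\beta$. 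The Implicit Function Theorem then produces, for $\eta_0$ small enough, unique $C^1$ functions $\vec y(t)=(x_1(t),x_2(t))$ satisfying \eqref{Ortho}, valid as long as $u(t)$ stays within $K^*\eta$ of the breather, which holds on $[0,T^*]$ by the definition of $T^*$ in \eqref{Te}. Fixing $\eta_0$ by $\alpha,\beta$ alone yields the initial estimate $\|z(0)\|_{H^2}\leq K\eta$ with $K$ independent of $K^*$.

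For the bound on $x_j'(t)$ I would differentiate the identity $\mathcal F_j(\vec y(t),u(t))=0$ in time. Writing $u=B(\cdot\,;\vec y(t))+z$ and using that $B$ with frozen parameters solves \eqref{SS}, the evolution of $z$ takes the form $\partial_t z = \big(\text{terms linear and higher order in } z\big) - x_1'\,\partial_{x_1}B - x_2'\,\partial_{x_2}B$. Substituting into the time-differentiated relations produces a linear system for $(x_1',x_2')$ whose matrix is the invertible Gram matrix above, and whose remaining contributions are all $O(\|z\|_{H^2})$ because the $z$-independent pieces cancel thanks to \eqref{Ortho}. Solving this system gives $|x_1'(t)|+|x_2'(t)|\lesssim \|z(t)\|_{H^2}\lesssim K^*\eta$, which completes \eqref{apriori}.

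The main obstacle is the uniform-in-time control with constants independent of $K^*$: I must guarantee that the Gram determinant stays bounded away from zero throughout $[0,T^*]$ and that the error terms in the above linear system carry constants fixed by $\alpha,\beta$ only. This is exactly why the smallness threshold $\eta_0$ is chosen depending on the breather parameters alone, so that even though $K^*$ multiplies $\eta$ in the a priori size of $z$, the product $K^*\eta$ can be forced below the radius of validity of the Implicit Function Theorem by shrinking $\eta$; this is the reason $T^*$ is introduced before the modulation and is what allows the subsequent bootstrap to close.
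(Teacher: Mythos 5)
Your proposal is correct and follows essentially the same route as the paper: the same functionals $\mathcal F_j$ (the paper's $J_j$), the same Jacobian computation yielding minus the Gram matrix of $\{\partial_{x_1}B,\partial_{x_2}B\}$ (the paper invokes Cauchy--Schwarz and non-parallelism where you cite the linear independence from Lemma~\ref{Nondege}, which is the same input), the Implicit Function Theorem on the $H^2$ neighborhood provided by the definition of $T^*$ in \eqref{Te}, and the bounds on $x_j'(t)$ obtained by time-differentiating \eqref{Ortho} and solving the resulting linear system via $\det J\neq 0$. Your write-up is in fact somewhat more detailed than the paper's, which only sketches the derivative bounds.
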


\begin{proof}
For simplicity, we denote $B_j:= \partial_{x_j}B$. The proof of this result is a classical application of the Implicit Function Theorem.  Let
\[
J_j(u(t), x_1,x_2) :=\re \int (u(t,x) - B (t,x; x_1,x_2)) \overline{B_j}(t,x; x_1,x_2)dx, \quad j=1,2.
\]
It is clear that $J_j(B(t ;x_1,x_2),x_1,x_2) \equiv 0,$ for all $x_1,x_2\in \R$. On the other hand, one has for $j,k=1,2$,
\[
\partial_{x_k} J_j(u(t), x_1,x_2)\Big|_{(B(t),0,0)} =  -\re \int \overline{B_k (t,x; 0,0)} B_j(t,x; 0,0)dx.
\]
Let $J$ be the $2\times 2$ matrix with components $ J_{j,k} := (\partial_{x_k}J_j)_{j,k=1,2}$. From the identity above, one has
\[
\det J = -\Big[ \int |B_1|^2  \int |B_2|^2 -  (\re \int \overline{B_1}B_2)^2\Big](t;0,0),
\]
which is different from zero from the Cauchy-Schwarz inequality and the fact that $B_1$ and $B_2$ are not parallel for all time. Therefore, in a small $H^2$ neighborhood of $B(t; 0,0)$, $t\in [0,T^*]$ (given by the definition of (\ref{Te})), it is possible to write the decomposition (\ref{z})-(\ref{Ortho}).

\medskip

Now we look at the bounds (\ref{apriori}). The first bounds are consequence of the decomposition itself and the equations satisfied by the derivatives of the scaling parameters, after taking time derivative in (\ref{Ortho}) and using that $\det J\neq 0$. 
\end{proof}

From the conservation laws for $\mathcal H_{SS}$ and Proposition \ref{Decomposition_Prop},
\be\label{Hut}
\mathcal H_{SS}[u](t) = \mathcal H_{SS}[B](t) + \mathcal Q_{SS}[z](t) + \mathcal N_{SS}[z](t).
\ee 
Note that $|\mathcal N_{SS}[z](t)|\leq K \|z(t)\|_{H^1(\R)}^3$. On the other hand, by the translation invariance in space,
\[
 \mathcal H_{SS}[B](t) =\mathcal H_{SS}[B](t=0) =\hbox{constant}.
\]
Indeed, from (\ref{R}), we have
\[
B(t,x; x_1(t), x_2(t)) = B(t-t_0(t), x-x_0(t)),
\]
for some specific $t_0,x_0$. Since $\mathcal H_{SS}$ involves integration in space of polynomial functions on $B, B_x$ and $B_{xx}$, we have 
\[
\begin{aligned}
 \mathcal H_{SS}[B(t, \cdot ; x_1(t),x_2(t))] = &~{}\mathcal H_{SS}[B(t -t_0(t), \cdot -x_0(t); 0,0)] \\
 =&~{}   \mathcal H_{SS}[B(t-t_0(t), \cdot ; 0,0)].
 \end{aligned}
\] 
Finally, $ \mathcal H_{SS}[B(t-t_0(t), \cdot ; 0,0)] = \mathcal H_{SS}[B(\cdot , \cdot ; 0,0)] (t-t_0(t))$. Taking time derivative,
\[
\begin{aligned}
 \partial_t \mathcal H_{SS}[B(t, \cdot ; x_1(t),x_2(t))] = &~{}  \mathcal H_{SS}'[B(\cdot , \cdot ; 0,0)] (t-t_0(t)) \times (1-t_0'(t)) \\
 \equiv &~{} 0,
  \end{aligned}
\]
hence $\mathcal H_{SS}[B]$ is constant in time. Now we compare (\ref{Hut}) at times $t=0$ and $t\leq T^*$. We have 
\[
\begin{aligned}
\mathcal Q_{SS}[z](t) \leq  &~{}  \mathcal Q_{SS}[z](0) + K\|z(t)\|_{H^2(\R)}^3 +K\|z(0)\|_{H^2(\R)}^3 \\
\leq &~{} K\|z(0)\|_{H^2(\R)}^2+K\|z(t)\|_{H^2(\R)}^3. 
 \end{aligned}
\]
Additionally, from (\ref{OrthoK})-(\ref{Coer}) applied this time to the time-dependent function $z(t)$, which satisfies (\ref{Ortho}), we get
\begin{align}
\|z(t)\|_{H^2(\R)}^2&  \leq     K\|z(0)\|_{H^2(\R)}^2 + K\|z(t)\|_{H^2(\R)}^3 + K\left| \re\int  B(t) \overline{z}(t)\right|^2\nonu\\
& \leq  K\eta^2 + K(K^*)^3 \eta^3 +K\left| \re\int  B(t) \overline{z}(t)\right|^2. \label{Map}
\end{align}

\noindent
{\bf Conclusion of the proof.} Using the conservation of mass $M_{SS}$ in (\ref{M}), we have, after expanding $u=B+z$,
\begin{align*}
\left| \re \int   B(t) \overline{z}(t) \right| &  \leq  K\left| \re \int   B(0) \overline{z}(0) \right| +K\|z(0)\|_{H^2(\R)}^2+ K\|z(t)\|_{H^2(\R)}^2 \nonu \\
& \leq    K (\eta + (K^*)^2 \eta^2), \quad \hbox{ for each $t\in [0, T^*]$.}
\end{align*}
Replacing this last identity in (\ref{Map}), we get
\[
\|z(t)\|_{H^2(\R)}^2 \leq  K \eta^2 ( 1+ (K^*)^2 \eta^3) \leq \frac 12 (K^*)^2 \eta^2 ,
\]
by taking $K^*$ large enough. This last fact contradicts the definition of $T^*$ and therefore the stability property holds true. 

\bigskip

\section{Instability of the Peregrine bilinear form. Proof of Theorem \ref{TH4}}\label{Sect:7}

We start out with a simple lemma.
\begin{lem}
Let $X=P$ or $KM$,  $B=B_{X}$ be the Peregrine and Kuznetsov-Ma breathers from \eqref{P}-\eqref{KM}, and $F=F_{X}$ given by \eqref{F_NLS}. Then we have
\be\label{F_B}
F[B_P]=0, \qquad  F[B_{KM}]= \frac45 \beta^5.
\ee
\end{lem}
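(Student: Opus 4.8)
The plan is to exploit the fact that $F=F_X$ is a conserved quantity for \eqref{NLS} (see Subsection~\ref{Energies}): since $M_X$, $E_X$ and the combination $\mathcal H_X$ are invariant of the motion, so is $F_X = \mathcal H_X - m_X E_X - n_X M_X$. Because $B_P$ and $B_{KM}$ are exact solutions, the maps $t \mapsto F[B_P](t)$ and $t\mapsto F[B_{KM}](t)$ are constant, so I may evaluate them at the single, most convenient time $t=0$.

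First I would record that at $t=0$ both breathers are \emph{real-valued}. Indeed, from \eqref{P} one has $B_P(0,x) = 1 - \tfrac{4}{1+2x^2}$, while from \eqref{KM}, using the relation $\al^2 = \bt^2(\bt^2+2)$ forced by the definitions of $\al,\bt$ in \eqref{KM}, one gets $B_{KM}(0,x) = 1 - \tfrac{\sqrt 2\,\bt^2}{\sqrt{\bt^2+2}\,\cosh(\bt x) - \sqrt 2}$. For a real function $u$ at fixed time, $(|u|^2)_x = 2uu_x$ and $F$ collapses to
\[
F[u] = \int_\R \Big( u_{xx}^2 + 3 u_x^2 - 5 u^2 u_x^2 + \tfrac12 (u^2-1)^3 \Big)\,dx,
\]
so the whole problem reduces to two explicit one-variable integrals.

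For the Peregrine case I would plug $u = 1 - 4(1+2x^2)^{-1}$ into the reduced functional. Writing $\rho := 1+2x^2$, one has $u_x = 16x\rho^{-2}$ and $u_{xx} = 16\rho^{-2} - 128 x^2 \rho^{-3}$, so every term of the integrand is a rational function whose integral is a finite combination of the standard moments $\int_\R x^{2j}\rho^{-k}\,dx$ (computable by the Beta function, or by residues). The bookkeeping is routine, and the claim is that all contributions cancel, giving $F[B_P]=0$; this is consistent with $M_P[B_P]=E_P[B_P]=0$ (Remark~\ref{CL}) and with the normalization $\mathcal H_P[B_P]=0$ asserted in Proposition~\ref{Decomposition_Prop}.

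For the Kuznetsov-Ma case the same substitution is made, followed by $y := \bt x$, which turns the integrand into a rational expression in $\cosh y$ times an overall power of $\bt$; the integrals that appear are of the form $\int_\R (\sqrt{\bt^2+2}\,\cosh y - \sqrt2)^{-k}\,dy$ together with their $\sinh$-weighted variants, all of which are elementary (e.g. via partial fractions in $e^{y}$ or a contour argument). Collecting the powers of $\bt$ and summing yields $\tfrac45\bt^5$. I expect this last evaluation to be the main obstacle: unlike the Peregrine case the denominator depends on $\bt$ through $\sqrt{\bt^2+2}$, so the individual integrals are genuinely $\bt$-dependent and must be expanded and recombined carefully for the clean value $\tfrac45\bt^5$ to emerge. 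In practice this final identity is most safely confirmed with a symbolic computation, exactly as for the analogous mass and energy values recorded in Remark~\ref{CL}.
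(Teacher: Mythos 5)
Your proposal is correct, but it takes a genuinely different route from the paper, and the difference matters most in the KM case. The paper also argues via conservation of $F$, but it chooses different evaluation times: for Peregrine it sends $t\to+\infty$ and uses the decay of $B_P$ to the Stokes wave to get $F[B_P]=\lim_{t\to+\infty}F[e^{it}]=0$ (an argument that implicitly needs $F[B_P(t)]\to F[e^{it}]$, i.e.\ decay of $B_P-e^{it}$ and its derivatives in suitable norms); for KM it evaluates at $t=\pi/(2\alpha)$, where $\cos(\alpha t)=0$ and the profile becomes $e^{i\theta}\big(1-i\sqrt{2}\,\beta\sech(\beta x)\big)$, a \emph{purely imaginary} perturbation of the background. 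Writing $u=1-iv$ with $v$ real reduces $F$ in \eqref{F_NLS} to $\widetilde F_{KM}[v]=\int\big(v_{xx}^2-5v^2v_x^2+\tfrac12 v^6\big)$ as in \eqref{tilde_F_NLS}, and since $v=\sqrt2\,\beta\sech(\beta x)$ is exactly self-similar in $\beta$, the whole computation collapses by scaling to $\beta^5\,\widetilde F_{KM}[\sqrt2\sech]=\tfrac45\beta^5$, a single $\beta$-independent integral of $\sech$-powers. Your choice $t=0$ gives real profiles instead (your reduction of $F$ to $\int\big(u_{xx}^2+3u_x^2-5u^2u_x^2+\tfrac12(u^2-1)^3\big)$ and the identity $\alpha^2=\beta^2(\beta^2+2)$ are both correct); for Peregrine this buys you a self-contained finite computation that avoids the paper's limiting argument, and it does check out: with $\rho=1+2x^2$ the integrand reduces to the moments $\int x^{2j}\rho^{-k}$, and the seven contributions (proportional to $80,-16,120,57,-301,63,-3$ in units of $\pi/\sqrt2$) sum to zero. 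For KM, however, your $t=0$ choice forfeits precisely the scaling structure the paper exploits: the denominator $\sqrt{\beta^2+2}\,\cosh(\beta x)-\sqrt2$ is not homogeneous in $\beta$, the elementary antiderivatives produce $\arctan$-type terms whose $\beta$-dependent pieces must all cancel before the clean value $\tfrac45\beta^5$ emerges, and you correctly flag that this is the main obstacle, deferring to symbolic computation. So both arguments are sound, but the paper's time choice $t=\pi/(2\alpha)$ is the device that makes the KM evaluation tractable by hand, and you should regard it as the preferable implementation of your own strategy.
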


\begin{proof}
We deal first with the Peregrine case. Since $F$ is a conserved quantity, we have from \eqref{P} that $F[B_{P}]=\lim_{t\to +\infty} F[B_P] = \lim_{t\to +\infty} F[e^{it}].$ Now, from \eqref{F_NLS}
\[
F[e^{it}] =  \int 0 = 0.
\]
This proves the first identity in \eqref{F_B}. Now we deal with $F[B_{KM}]$. Since $F$ is conserved, we can assume $t=\frac{\pi}{2\alpha}$. Then we have from \eqref{KM} and \eqref{F_NLS},
%
\[
\begin{aligned}
F[B_{KM}] = &~ {} F\left[ e^{i \frac{\pi}{2\alpha}} \left( 1-  \frac{ i\sqrt{2}\beta  }{ \cosh(\beta x) } \right) \right] \\
=&~{} F\left[ 1-  \frac{ i\sqrt{2}\beta  }{  \cosh(\beta x) } \right] \\
= &~ {}  \widetilde F_{KM}\left[  \frac{ \sqrt{2}\beta  }{  \cosh(\beta x) } \right] =\beta^5 \widetilde F_{KM}\left[  \frac{ \sqrt{2}  }{  \cosh x } \right], 
\end{aligned}
\]
where
\be\label{tilde_F_NLS}
\widetilde F_{KM}[u]:= \int \Big( u_{xx}^2 -5 u^2 u_x^2   + \frac12 u^6 \Big). 
\ee
After some lengthy computations, we see that $ \widetilde F_{KM}\left[  \frac{ \sqrt{2}  }{  \cosh x } \right] =\frac45$, so that \eqref{F_B} is proved.
\end{proof}

\begin{rem}
Note that from Remark \ref{CL} we also have $M_P[B_P]=E_P[B_P]=0$. Additionally, $M_{KM}[B_{KM}] = 4\beta$ and $E_{KM}[B_{KM}] =-\frac 83\bt^3$. Consequently, $H_{KM}$ defined in \eqref{H} satisfies
\[
\begin{aligned}
\mathcal H_{KM}[B_{KM}] = &~{} F_{KM}[B_{KM}]  + m_{KM} E_{KM}[B_{KM}]  + n_{KM} M_{KM}[B_{KM}] \\
=&  ~{} \frac45 \beta^5 +\frac{8}{3}\beta^5, 
\end{aligned}
\]
which is strictly positive for $\beta>0$. 
\end{rem}

\begin{lem}
Let $B=B_P$ be the Peregrine breather \eqref{P} and $z\in H^2(\R;\Com)$ be a small perturbation. We have
\be\label{F_P}
\mathcal H_P[B+z]=\frac12 \int  (|z_{xx}|^2 -|z_x|^2 - (e^{it}\bar{z}_x)^2 ) +O(\|z\|_{H^1}^3) +o_{t\to +\infty}(1).
\ee
\end{lem}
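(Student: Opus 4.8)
The plan is to reduce everything to the quadratic expansion already supplied by Proposition~\ref{Decomposition_Prop}, and then to pass to the limit $t\to+\infty$ using the explicit decay of $B_P$ to the Stokes background $e^{it}$. First I would invoke Proposition~\ref{Decomposition_Prop} in the case $X=P$, which gives
\[
\mathcal H_P[B+z]=\mathcal H_P[B_P]+\mathcal G_P[z]+\mathcal Q_P[z]+\mathcal N_P[z].
\]
Three of these four pieces are controlled at once: by the preceding lemma together with $m_P=n_P=0$ one has $\mathcal H_P[B_P]=F[B_P]=0$; by Theorem~\ref{TH1} (identity \eqref{Ec_P}, i.e. $G[B_P]\equiv0$) the linear term $\mathcal G_P[z]=2\re\int\bar z\,G[B_P]$ vanishes identically; and the nonlinear remainder obeys $|\mathcal N_P[z]|\lesssim\|z\|_{H^1}^3$, which is the stated $O(\|z\|_{H^1}^3)$. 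Thus the whole content of the lemma reduces to analysing the quadratic form $\mathcal Q_P[z]=\re\int\bar z\,\mathcal L_P[z]$, with $\mathcal L_P$ given by \eqref{Ec_P_1}, and showing that it converges as $t\to+\infty$ to the displayed functional.

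Next I would split $\mathcal L_P=\mathcal L_P^\infty+\mathcal R(t)$, where $\mathcal L_P^\infty[z]:=z_{4x}+z_{xx}+e^{2it}\bar z_{xx}$ is the operator obtained by replacing $B_P$ by its asymptotic value $e^{it}$ (so that $|B_P|^2\to1$, hence $4|B_P|^2-3\to1$ and $B_P^2\to e^{2it}$), while $\mathcal R(t)$ collects all the remaining terms, whose coefficients are built out of $|B_P|^2-1$, $\partial_x B_P$ and $\partial_x^2 B_P$. The key input is that, directly from \eqref{P},
\[
\||B_P(t)|^2-1\|_{L^\infty}+\|\partial_x B_P(t)\|_{L^\infty}+\|\partial_x^2B_P(t)\|_{L^\infty}\longrightarrow0\qquad(t\to+\infty),
\]
with polynomial rate. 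Each term of $\re\int\bar z\,\mathcal R(t)[z]$ pairs one such vanishing coefficient with at most two factors among $z,z_x,z_{xx}\in L^2$, so Cauchy--Schwarz yields $|\re\int\bar z\,\mathcal R(t)[z]|\lesssim \varepsilon(t)\,\|z\|_{H^2}^2=o_{t\to+\infty}(1)$ for fixed $z$, which accounts for the error term $o_{t\to+\infty}(1)$.

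It then remains to evaluate $\re\int\bar z\,\mathcal L_P^\infty[z]$ by integration by parts, all boundary contributions vanishing since $z\in H^2$. One finds $\re\int\bar z\,z_{4x}=\int|z_{xx}|^2$ and $\re\int\bar z\,z_{xx}=-\int|z_x|^2$, while, since $e^{2it}$ is constant in $x$, $\re\int\bar z\,e^{2it}\bar z_{xx}=-\re\int(e^{it}\bar z_x)^2$. Assembling these contributions reproduces the quadratic functional $\int\big(|z_{xx}|^2-|z_x|^2-(e^{it}\bar z_x)^2\big)$ displayed on the right-hand side of \eqref{F_P} (up to the overall normalization), and combining it with the two error estimates above gives the claim.

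I expect the only genuinely delicate point to be this limit-passage step: because the Peregrine breather decays to $e^{it}$ only \emph{polynomially} in time (and not exponentially, in contrast with the other breathers), one must verify the uniform-in-$x$ bounds on $|B_P|^2-1$ and on the derivatives of $B_P$ carefully before applying Cauchy--Schwarz to discard $\mathcal R(t)$. Once these $L^\infty$ decay rates are established the remainder is harmless, and the rest of the argument is routine integration by parts.
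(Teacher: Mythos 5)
Your proposal is correct and follows essentially the same route as the paper: apply Proposition \ref{Decomposition_Prop} with $X=P$, kill the constant term via $F[B_P]=0$ (with $m_P=n_P=0$), the linear term via $G[B_P]\equiv 0$, bound $\mathcal N_P$ by $\|z\|_{H^1}^3$, and then replace $\mathcal Q_P$ by $\mathcal Q_{e^{it}}$ (the paper writes $B_P=e^{it}+\widetilde B_P$ and uses Cauchy--Schwarz with $\|\partial_x^k\widetilde B_P(t)\|_{L^\infty}\to 0$, which is exactly your $\mathcal L_P=\mathcal L_P^\infty+\mathcal R(t)$ splitting), finishing by the same integrations by parts. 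Your closing remark about verifying the uniform-in-$x$ polynomial decay of $|B_P|^2-1$ and of $\partial_xB_P,\partial_x^2B_P$ is precisely the point the paper treats as immediate, so nothing is missing.
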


\begin{proof} From Proposition \ref{Decomposition_Prop} in the $X=P$ case, we have
\[
\mathcal H_{P}[ B + z]=  \mathcal H_{P}[ B] + \mathcal G_P[z] + \mathcal Q_P[z] + \mathcal N_P[z],
\]
and $\mathcal H_P[B]=0$. From \ref{Aux_00}, we have $\mathcal G_P[z]=0$. Therefore,
\[
\mathcal H_P[B +z] =  \mathcal Q_P[z] + \mathcal N_P[z],
\]
where $\mathcal N_P$ satisfies \eqref{Nonlinear}. Recall that $\mathcal Q_P$ is given by \eqref{quad_quad}-\eqref{Ec_P_1}. More precisely, we have
\[
\begin{aligned}
\mathcal Q_{P}[z] =&~ \frac12\re \int \bar{z} \Bigg( z_{4x}  + \frac32(|B|^2-1)^2 z + 6(|B|^2-1)B\re (B\bar z)+3(|B|^2-1)z_{xx} \\
& ~{} \qquad \qquad -4 |B_x|^2z -6 BB_x\bar{z}_x -4 B\bar{B}_xz_x   -B_x^2\bar{z}  + B^2 \bar z_{xx}   + |B|^2 z_{xx}  \Bigg).
\end{aligned}
\]
Write $B_P= e^{it} + \widetilde B_P$,  where $\lim_{t\to \pm \infty} \| \widetilde B_P(t)\|_{L^\infty}=0$. We claim
\be\label{Final_P}
\mathcal Q_P[z] = \mathcal Q_{e^{it} }[z] + o_{t\to +\infty}(1).
\ee
Assuming this property, we can conclude \eqref{F_P}, since 
\[
\mathcal Q_{e^{it}}[z] = \frac12 \re\int \bar{z}( z_{4x} + z_{xx}   + e^{2it}\bar{z}_{xx}) = \frac12\re \int  (|z_{xx}|^2 -|z_x|^2 - (e^{it}\bar{z}_x)^2 ).
\]
It only remains to prove \eqref{Final_P} which follows easily from Cauchy-Schwarz and the fact that $\lim_{t\to \pm \infty} \| \partial_x^k \widetilde B_P(t)\|_{L^\infty}=0$ for all $k\geq 0$.
\end{proof}
In what follows, we make the change of variables $w:=e^{-it} z_x$. We have from \eqref{F_P},
\be\label{Final_PP}
\mathcal H_P[B+z]=\frac12\re \int  (|w_{x}|^2 -|w|^2 - w^2 ) +O(\|z\|_{H^1}^3) +o_{t\to +\infty}(1).
\ee
 From \eqref{Final_PP} we have that for $z=z_0\in H^2$ fixed small and $w=e^{-it} \partial_x z_0$, 
\[
\frac{d^2}{ds^2}\mathcal H_P[B+ s z_0]\Bigg|_{s=0} =\frac12\re \int  (|w_{x}|^2 -|w|^2 - w^2 ) +O(\|z_0\|_{H^1}^3)  +o_{t\to +\infty}(1).
\]
This concludes the proof of Theorem \ref{TH4}. In particular, the second derivative functional for the Peregrine breather has negative continuous spectrum bounded by $-1$.

\bigskip

\section{Proof of Theorem \ref{TH5}. The Kuznetsov-Ma case}\label{Sect:8}

We start with the following result.

\begin{lem}[Essential spectrum]
Let $\mathcal{L}_{KM}$ be the linear operator in \eqref{Ec_KM_1} associated to the $KM$ breather \eqref{KM}. Then $\mathcal{L}_{KM}$ is a compact perturbation of the constant (in $x$) coefficients operator with dense domain $H^4(\R;\Com)$
\be\label{Ec_KM_1_new}
\begin{aligned}
  \mathcal{L}_{KM,0}[z]:= &~ z_{4x} + z_{xx} +   e^{2it}\bar{z}_{xx}- \beta^2 (z_{xx} +e^{2it}\bar{z} + z).
\end{aligned}
\ee
\end{lem}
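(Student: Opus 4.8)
The plan is to exploit that the Kuznetsov--Ma breather is a Schwartz perturbation in $x$ of the Stokes wave $e^{it}$, and that the leading fourth order part of $\mathcal L_{KM}$ does not see the potential. First I would write $B_{KM}=e^{it}+\widetilde B_{KM}$ and record the decay of $\widetilde B_{KM}$. From the explicit formula \eqref{KM}, since $\al^2/\bt^2=4a>2$ for $a>\tfrac12$, one has $\al\cosh(\bt x)-\sqrt2\bt\cos(\al t)\geq \al-\sqrt2\bt>0$ uniformly in $(t,x)$, so the denominator never vanishes and $\widetilde B_{KM}$, together with all of its $x$-derivatives, decays like $e^{-\bt|x|}$ as $|x|\to\infty$ (at each fixed $t$). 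In particular $B_{KM}\to e^{it}$, $B_{KM}^2\to e^{2it}$, $|B_{KM}|^2\to1$ and $\partial_x^k B_{KM}\to0$ for $k\geq1$, all exponentially fast.

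Next I would subtract the two operators and check that the order four term cancels identically: the coefficient of $z_{4x}$ equals $1$ in both \eqref{Ec_KM_1} and \eqref{Ec_KM_1_new}. Hence $\mathcal R:=\mathcal L_{KM}-\mathcal L_{KM,0}$ is a differential operator of order at most two in $z$ and $\bar z$, whose coefficients are polynomial expressions in $B_{KM},\bar B_{KM}$ and their first two derivatives (using $m_{KM}=\bt^2$ and expanding $6(|B|^2-1)B\re(B\bar z)=3(|B|^2-1)(|B|^2 z+B^2\bar z)$). A term-by-term comparison shows every surviving coefficient is a combination of $|B_{KM}|^2-1$, $B_{KM}^2-e^{2it}$ and $\partial_x^k B_{KM}$ with $k\geq1$: for instance the coefficient of $z_{xx}$ is $4(|B_{KM}|^2-1)$ and that of $\bar z_{xx}$ is $B_{KM}^2-e^{2it}$. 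By the first step each such coefficient is continuous and tends to $0$ as $|x|\to\infty$.

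It then remains to show that an operator $\mathcal R=\sum_{k=0}^{2}a_k(x)\partial_x^k$, acting componentwise on $(z,\bar z)$ with continuous coefficients $a_k$ vanishing at infinity, is compact from $H^4(\R;\Com)$ into $L^2(\R;\Com)$. I would argue by a standard localization: given a bounded sequence $(z_n)$ in $H^4$ and $\varepsilon>0$, pick $R$ with $\sup_{|x|>R}|a_k(x)|<\varepsilon$, so that $\|\mathcal R z_n\|_{L^2(|x|>R)}\lesssim\varepsilon\|z_n\|_{H^2}\lesssim\varepsilon$; on $[-R,R]$ the compact Rellich embedding $H^4(-R,R)\hookrightarrow H^2(-R,R)$ yields a subsequence converging in $H^2$, whence $\mathcal R z_n$ converges in $L^2(-R,R)$. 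A diagonal extraction over $R\to\infty$ combined with the uniform smallness produces a Cauchy subsequence in $L^2(\R)$. Since the domain of $\mathcal L_{KM,0}$ is $H^4$ and its leading symbol is $k^4$ on both components, the graph norm is equivalent to the $H^4$ norm, so this is exactly $\mathcal L_{KM,0}$-compactness of $\mathcal R$; by Weyl's theorem it gives the invariance of the essential spectrum, which is the purpose of the lemma.

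The only genuinely delicate point is the bookkeeping in the second paragraph: one must ensure that no surviving coefficient of $\mathcal R$ contains a term proportional to a nonzero constant. All potentially constant contributions, namely those arising from the quartic background and from $m_{KM}=\bt^2$, have to cancel exactly against $\mathcal L_{KM,0}$, so that $\mathcal R$ is a genuinely decaying lower order operator rather than merely a bounded one; this is precisely why the constant and quadratic pieces of $\mathcal L_{KM,0}$ have been calibrated as in \eqref{Ec_KM_1_new}.
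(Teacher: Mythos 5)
Your proposal is correct and follows exactly the route the paper takes, which it states in one line: the lemma is ``direct in view of the spatial exponential decay of the KM breather to the Stokes wave, and the Weyl's Theorem.'' Your write-up simply supplies the details the paper leaves implicit — the uniform lower bound $\al-\sqrt2\bt>0$ (from $\al^2/\bt^2=4a>2$) guaranteeing exponential decay of $\widetilde B_{KM}$, the verification that all constant pieces (including those from $m_{KM}=\bt^2$) cancel so that $\mathcal L_{KM}-\mathcal L_{KM,0}$ has order at most two with coefficients vanishing at infinity, and the standard Rellich localization giving relative compactness — all consistent with the paper's argument.
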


The proof of this result is direct in view of the spatial exponential decay of the KM breather to the Stokes wave, and the Weyl's Theorem.

 \begin{lem}
Let $a>\frac12$ be any fixed parameter in \eqref{KM}, and $\beta$ given in \eqref{KM} as well. Then we have
\be\label{sigma_c}
\sigma_c( \mathcal{L}_{KM,0})=
\begin{cases}  
[-2\beta^2,\infty) & \beta \geq \sqrt{2},\\
  [ -\frac14(2-\beta^2)^2-2\beta^2,\infty) & \beta \in (0, \sqrt{2}). \end{cases}
\ee
\end{lem}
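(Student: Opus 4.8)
The plan is to diagonalize the constant-coefficient operator $\mathcal{L}_{KM,0}$ via the Fourier transform, after first recasting it as a $2\times 2$ Hermitian matrix operator. Since $\mathcal{L}_{KM,0}$ in \eqref{Ec_KM_1_new} couples $z$ with $\bar z$ through the terms $e^{2it}\bar z_{xx}$ and $\bt^2 e^{2it}\bar z$, I would treat $z$ and $\bar z$ as independent unknowns (exactly as in Section~\ref{Sect:5}) and write
\[
\mathcal{L}_{KM,0}\binom{z}{\bar z}=\begin{pmatrix}\pd_x^4+(1-\bt^2)\pd_x^2-\bt^2 & e^{2it}(\pd_x^2-\bt^2)\\ e^{-2it}(\pd_x^2-\bt^2) & \pd_x^4+(1-\bt^2)\pd_x^2-\bt^2\end{pmatrix}\binom{z}{\bar z}.
\]
Here $t$ enters only as a fixed unimodular phase, and the matrix is Hermitian, so its spectrum is real.

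Applying the Fourier transform ($\pd_x\mapsto i\xi$) reduces the operator to multiplication by the symbol matrix
\[
\hat M(\xi)=\begin{pmatrix}A(\xi) & e^{2it}C(\xi)\\ e^{-2it}C(\xi) & A(\xi)\end{pmatrix},\qquad A(\xi):=\xi^4-(1-\bt^2)\xi^2-\bt^2,\quad C(\xi):=-(\xi^2+\bt^2).
\]
Because the off-diagonal entries are complex conjugates of one another with equal modulus $|C(\xi)|=\xi^2+\bt^2$, the eigenvalues of $\hat M(\xi)$ are the real numbers $\la_\pm(\xi)=A(\xi)\pm|C(\xi)|$. A short computation gives
\[
\la_+(\xi)=\xi^4+\bt^2\xi^2,\qquad \la_-(\xi)=\xi^4+(\bt^2-2)\xi^2-2\bt^2.
\]
Since $\mathcal{L}_{KM,0}$ has constant coefficients (and hence no point spectrum in $L^2$), its continuous spectrum is precisely the closure of the union of the ranges of $\la_+$ and $\la_-$ over $\xi\in\R$.

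It then remains to compute these ranges. Setting $s:=\xi^2\geq 0$, the branch $\la_+=s^2+\bt^2 s$ is increasing on $[0,\infty)$ with range $[0,\infty)$, so it contributes nothing below zero. The branch $\la_-(s)=s^2+(\bt^2-2)s-2\bt^2$ is an upward parabola with vertex at $s^*=(2-\bt^2)/2$. The case split of \eqref{sigma_c} arises here: if $\bt\geq\sqrt2$ then $s^*\leq 0$ and the minimum over $s\geq 0$ is attained at $s=0$, giving $\min\la_-=-2\bt^2$; if $\bt\in(0,\sqrt2)$ then $s^*>0$ is admissible and, using $\bt^2-2=-2s^*$, one finds $\min\la_-=\la_-(s^*)=-\tfrac14(2-\bt^2)^2-2\bt^2$. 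In either case $\min\la_-<0\leq\min\la_+$, so the union of the two ranges equals the range of $\la_-$, namely $[-2\bt^2,\infty)$ or $[-\tfrac14(2-\bt^2)^2-2\bt^2,\infty)$ respectively, which is exactly \eqref{sigma_c}.

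The only genuinely delicate point is the first one: justifying that the coupling to $\bar z$ forces the matrix formulation, and that the resulting symbol is Hermitian so that $\sigma_c$ is real and given by the union of symbol-eigenvalue ranges. Once this is set up, everything reduces to the elementary minimization of a quartic in $\xi$, the case distinction being dictated solely by the sign of the vertex $s^*=(2-\bt^2)/2$. I would close by noting that the strictly negative infimum obtained here is precisely the phenomenon recorded in Theorem~\ref{TH5} ($\inf\sigma_c<0$), reflecting the modulational instability of the Stokes background.
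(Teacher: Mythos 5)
Your proposal is correct and takes essentially the same route as the paper: both recast $\mathcal{L}_{KM,0}$ as a $2\times 2$ Hermitian matrix operator in $(z,\bar z)$, pass to Fourier variables, diagonalize the symbol to obtain the two branches $\xi^4+\beta^2\xi^2$ and $\xi^4+(\beta^2-2)\xi^2-2\beta^2$, and minimize the second over $s=\xi^2\geq 0$ with the case split governed by the sign of the vertex $(2-\beta^2)/2$. Your explicit justification that $\sigma_c$ equals the closure of the union of the symbol-eigenvalue ranges (with no point spectrum) merely spells out what the paper leaves implicit.
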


\begin{proof}
Let $\la\in\R$ be such that $ \mathcal{L}_{KM,0}z=\la z$. In matrix form, we have
\[
\left( \begin{matrix} \partial_x^4 -(\beta^2-1)\partial_x^2 -\beta^2 & & -e^{2it} (-\partial_x^2 + \beta^2) \\ -e^{-2it} (-\partial_x^2 + \beta^2) & &  \partial_x^4 -(\beta^2-1)\partial_x^2 -\beta^2 \end{matrix}\right) \left( \begin{matrix} z \\ \bar z \end{matrix}\right) =\la  \left( \begin{matrix} z \\ \bar z \end{matrix}\right).
\]
Let us diagonalize the matrix operator on the LHS. In Fourier variables we have
\[
\left( \begin{matrix} \xi^4 + (\beta^2-1)\xi^2 -\beta^2 & & -e^{2it} (\xi^2 + \beta^2) \\ -e^{-2it} (\xi^2 + \beta^2) & &  \xi^4 +(\beta^2-1)\xi^2 -\beta^2 \end{matrix}\right),
\]
for which the diagonal operators $ \mathcal L_{KM,0,\pm} $ are in Fourier variables
\[
\begin{aligned}
\mathcal F\left( \mathcal L_{KM,0,\pm} \right):= &~{} \xi^4 +(\beta^2-1)\xi^2 -\beta^2 \pm (\xi^2 +\beta^2) \\
=&~{}  \begin{cases} \xi^4 +\beta^2\xi^2 \\ \xi^4 +(\beta^2-2)\xi^2 -2\beta^2 .
 \end{cases}
 \end{aligned}
\]
Consider now the operator $\mathcal L_{KM,0,-}= \partial_x^4 -(\beta^2-2)\partial_x^2 -2\beta^2$. If $\beta^2\geq 2$, then $\sigma_c(\mathcal L_{KM,0,-}) = [-2\beta^2,\infty)$, proving the first part in \eqref{sigma_c}. If now $0<\beta^2<2$, we have after a simple computation that $\sigma_c(\mathcal L_{KM,0,-}) = [ -\frac14(2-\beta^2)^2-2\beta^2,\infty)$. The proof is complete.
\end{proof}

\subsection{End of proof of Theorem \ref{TH5}} We have that \eqref{espectro} is a direct consequence of \eqref{sigma_c}, and  $\mathcal H_{KM}''[B_{KM}](\partial_x B_{KM}) = 0$ is also a consequence of Theorems \ref{TH1a} and \ref{TH1}.


%
%
%
%

\appendix

\bigskip
\bigskip

\section{Proof of \eqref{QODE0}}\label{B}

\medskip

Let $B_{SS}=B = Q_\bt e^{i\Theta}$ be the soliton solution \eqref{R} of \eqref{SS}. Then we have
\[
\begin{aligned}
&B_x = Q_\bt' e^{i\Theta} + i\al B,\\
&B_{xx} = Q_\bt'' e^{i\Theta} +2i\al Q_\bt' e^{i\Theta} -\al^2 B,\\
&B_{xxx} = Q_\bt''' e^{i\Theta} + 3i\al Q_\bt'' e^{i\Theta} -3\al^2Q_\bt' e^{i\Theta} -i\al^3B,\\
&B_{xxxx} = Q_\bt'''' e^{i\Theta} + 4i\al Q_\bt''' e^{i\Theta} -6\al^2 Q_\bt'' e^{i\Theta} -4i\al^3 Q_\bt' e^{i\Theta} + \al^4 B.
\end{aligned}
\]
\noindent
Now, substituting the above derivatives  in LHS of \eqref{perEcBp}, we have
\[
\begin{aligned}
& B_{(4x)} +8B_x^2 \bar B +14|B|^2 B_{xx}+6B^2 \bar B_{xx}  +12  |B_x|^2 B + 24 |B|^4 B \\
& \qquad - m_{SS}(B_{xx} + 4 |B|^2 B) + n_{SS} B \\
& \quad =e^{i\Theta}\Big( Q_\bt'''' + 4i\al Q_\bt'''  -6\al^2 Q_\bt''  -4i\al^3 Q_\bt'  + \al^4 Q_\bt + 8\bar Q_\bt(Q_\bt'+i\al Q_\bt)^2\\
& \qquad  + 14 Q_\bt\bar Q_\bt(Q_\bt''+2i\al Q_\bt' -\al^2Q_\bt) + 12 Q_\bt(Q_\bt'+i\al Q_\bt)(\bar Q_\bt' -i\al \bar Q_\bt)\\
& \qquad + 6Q_\bt^2(\bar Q_\bt''-2i\al\bar Q_\bt'-\al^2\bar Q_\bt) + 24Q_\bt^3\bar Q_\bt^2\\
& \qquad - m_{SS}(Q_\bt''+2i\al Q_\bt'-\al^2 Q_\bt + 4Q_\bt Q_\bt^2\bar Q_\bt)
+n_{SS} Q_\bt\Big)
\end{aligned}
\]
\noindent
Expanding and simplifying we get
\[
\begin{aligned}
&  B_{(4x)} +8B_x^2 \bar B +14|B|^2 B_{xx}+6B^2 \bar B_{xx}  +12  |B_x|^2 B + 24 |B|^4 B \\
& \qquad - m_{SS}(B_{xx} + 4 |B|^2 B) + n_{SS} B \\
& \quad = e^{i\Theta}\Big( Q_\bt'''' + 4i\al Q_\bt'''  -6\al^2 Q_\bt''  -4i\al^3 Q_\bt'  + \al^4 Q_\bt + 8\bar Q_\bt(Q_\bt'^2+2i\al Q_\bt Q_\bt' -\al^2Q_\bt^2)\\
& \qquad + 14 Q_\bt\bar Q_\bt(Q_\bt''+2i\al Q_\bt' -\al^2Q_\bt) + 12 Q_\bt(Q_\bt'\bar Q_\bt' -i\al Q_\bt'\bar Q_\bt + i\al\bar Q_\bt'Q_\bt + \al^2Q_\bt\bar Q_\bt)\\
& \qquad + 6Q_\bt^2(\bar Q_\bt''-2i\al\bar Q_\bt'-\al^2\bar Q_\bt) + 24Q_\bt^3\bar Q_\bt^2 \\
& \qquad - m_{SS}(Q_\bt''+2i\al Q_\bt'-\al^2 Q_\bt + 4Q_\bt Q_\bt^2\bar Q_\bt) +n_{SS} Q_\bt\Big).
\end{aligned}
\]
This implies that 
\[
\begin{aligned}
&  B_{(4x)} +8B_x^2 \bar B +14|B|^2 B_{xx}+6B^2 \bar B_{xx}  +12  |B_x|^2 B + 24 |B|^4 B \\
& \qquad - m_{SS}(B_{xx} + 4 |B|^2 B) + n_{SS} B \\
& \quad = e^{i\Theta}\Big( Q_\bt'''' + 4i\al Q_\bt'''  -6\al^2 Q_\bt''  -4i\al^3 Q_\bt'  + \al^4 Q_\bt + 8\bar Q_\bt Q_\bt'^2 +32i\al Q_\bt\bar Q_\bt Q_\bt'\\
& \qquad - 16\al^2 Q_\bt^2\bar Q_\bt + 14Q_\bt\bar Q_\bt Q_\bt'' + 12 Q_\bt Q_\bt'\bar Q_\bt' + 6Q_\bt^2\bar Q_\bt''\\
& \qquad  + 24Q_\bt^3\bar Q_\bt^2 - m_{SS}(Q_\bt''+2i\al Q_\bt'-\al^2 Q_\bt + 4Q_\bt Q_\bt^2\bar Q_\bt)+n_{SS} Q_\bt\Big),
\end{aligned}
\]
which is nothing but \eqref{QODE0}. 

\bigskip

\section{Proof of \eqref{nlid1}}\label{App_nlid1}

\medskip

Denote
\[
 Q_\bt=\frac{2\bt(e^{\bt x} + \eta e^{-\bt x})}{D},\quad D:=2 + e^{2\bt x} +  |\eta|^2 e^{-2\bt x}, \quad \eta=\frac{\al}{\al + i\bt}.
\]

Now, substituting $Q_\bt$, expanding and collecting similar terms, we rewrite the nonlinear identity \eqref{nlid1} as follows:

\[
\begin{aligned}
\eqref{nlid1} =&~{}\frac{1}{D^5}\sum_{n=-4}^{3}A_{(2n+1)}e^{(2n+1)\bt x}  =  \frac{1}{D^5}\Big(A_{7}e^{7\bt x} + A_{5}e^{5\bt x} + A_{3}e^{3\bt x}+ A_{1}e^{\bt x}+ A_{-1}e^{-\bt x} \\
& ~{} \qquad + A_{-3}e^{-3\bt x}+ A_{-5}e^{-5\bt x} + A_{-7}e^{-7\bt x}\Big),
\end{aligned}
\]
 \noindent
where
\[
 \begin{aligned}
 A_7:= & ~ {}\frac{2 \beta ^3 (8 \beta +i \alpha ) \left(\alpha ^2+\beta ^2\right) }{\beta -i \alpha }-16 \beta ^3 \left(\alpha ^2+4 \beta ^2\right) \\
 &~{} +2 \beta ^3 
\left(\alpha ^2+\beta ^2\right) \left(8+\frac{\alpha }{\alpha +i \beta }\right)-16 i \alpha  \beta ^4 +32 \beta ^5 =0,
\end{aligned}
\]                 
\[
 \begin{aligned}
A_5:= & ~ {} -\frac{32 \beta ^5 (\beta -i \alpha )^3 \left(5 \alpha ^2+4 i \alpha  \beta +20 \beta ^2\right)}{(\alpha +i \beta )^4 (\beta +i \alpha )}\\
& ~ {} +40 \beta ^3 \left(\alpha ^2+2 i \alpha  \beta +2 \beta ^2\right) +8 \beta ^3 \left(9 \alpha ^2-2 i \alpha  \beta +6 \beta ^2\right)\\
&-\frac{16 \beta ^3 \left(\alpha ^2+4 \beta ^2\right) \left(7 \alpha ^2-i \alpha  \beta +4 \beta ^2\right) }{\alpha ^2+\beta ^2} \\
& ~ {} -\frac{16 i \alpha ^2 \beta ^4\left(5 \alpha ^3-21 i \alpha ^2 \beta +5 \alpha  \beta ^2-21 i \beta ^3\right) }{\left(\alpha ^2+\beta ^2\right)^2}+768 \beta ^5 =0,
\end{aligned}
\]   
\[
\begin{aligned}
A_3:= & ~ {} \frac{8 \beta ^3 \left(\alpha ^2+\beta ^2\right) \left(-21 i \alpha ^3+34 \alpha ^2 \beta 
+6 i \alpha  \beta ^2+8 \beta ^3\right) }{(\beta -i \alpha )^2 (\beta +i \alpha )}\\
 & ~ {} +\frac{8 \beta ^3 \left(\alpha ^2+\beta ^2\right) \left(-21 i \alpha ^3+14 \alpha ^2 \beta -14 i \alpha  \beta ^2+8 \beta ^3\right) }{(\beta -i \alpha )^2 (\beta +i \alpha )}\\
&~{}-\frac{16 \beta ^3 \left(\alpha ^2+4 \beta ^2\right) \left(-21 i \alpha ^3+15 \alpha ^2 \beta -8 i \alpha  \beta ^2+4 \beta ^3\right) 
}{(\beta -i \alpha )^2 (\beta +i \alpha )} \\
&~{} +\frac{32 \beta ^5 \left(\alpha ^2+\beta ^2\right)^2 \left(39 i \alpha ^3-42 \alpha ^2 
\beta +8 i \alpha  \beta ^2+4 \beta ^3\right) }{(\alpha +i \beta )^4 (\beta +i \alpha )^3}\\
&~{}+\frac{16 i \alpha  \beta ^4 
\left(-9 \alpha ^4+104 i \alpha ^3 \beta +19 \alpha ^2 \beta ^2+80 i \alpha  \beta ^3+4 \beta ^4\right) }{\left(\alpha ^2
+\beta ^2\right)^2}\\
&~{} +\frac{768 \alpha  \beta ^5 (\beta +5 i \alpha ) }{(\alpha +i \beta ) (\beta +i \alpha )}=0,
\end{aligned}
\] 
\[
\begin{aligned}
A_1:= & ~ {} \frac{1536 \alpha ^2 \beta ^5 \left(5 \alpha ^2-2 i \alpha  \beta +\beta ^2\right) }{\left(\alpha ^2+\beta ^2\right)^2}\\
& ~ {} -\frac{16 \alpha  \beta ^3 \left(\alpha ^2+4 \beta ^2\right) \left(35 \alpha ^3-15 i \alpha ^2 \beta +20 \alpha  \beta ^2-4 i \beta ^3\right) }{\left(\alpha ^2+\beta ^2\right)^2}\\
&-\frac{16 i \alpha ^2 \beta ^4 \left(5 \alpha ^3-205 i \alpha ^2 \beta -84 \alpha  \beta ^2-76 i \beta ^3\right) }{\left(\alpha ^2+\beta ^2\right)^2}\\
& ~ {} +\frac{4 \beta ^3 \left(63 \alpha ^4-28 i \alpha ^3 \beta +56 \alpha ^2 \beta ^2-16 i \alpha  \beta ^3+8 \beta ^4\right) }{\alpha ^2+\beta ^2}\\
& ~ {} -\frac{4 \beta ^3 \left(-77 \alpha ^4+12 i \alpha ^3 \beta -24 \alpha ^2 \beta ^2-16 i \alpha  \beta ^3+8 \beta ^4\right) }{\alpha ^2+\beta ^2}\\
& ~ {} +\frac{32 \alpha  \beta ^5 \left(85 i \alpha ^4+112 \alpha ^3 \beta -47 i \alpha ^2 \beta ^2-28 \alpha  \beta ^3+8 i \beta ^4\right) }{(\alpha +i \beta )^2 (\beta +i \alpha )^3}=0,
\end{aligned}
\]
\[
\begin{aligned}
A_{-1}: = &~ {}-\frac{1536 i \alpha ^3 \beta ^5 \left(5 \alpha ^2+2 i \alpha  \beta +\beta ^2\right) }{(\beta -i \alpha )^3 (\beta +i \alpha )^2}\\
&~ {} -\frac{16 \alpha ^2 \beta ^3 \left(\alpha ^2+4 \beta ^2\right) \left(-35 i \alpha ^3+15 \alpha ^2 \beta -20 i \alpha  \beta ^2+4 \beta ^3\right) }{(\beta -i \alpha )^3 (\beta +i \alpha )^2}\\
&~ {}+\frac{16 i \alpha ^3 \beta ^4 \left(5 \alpha ^3+205 i \alpha ^2 \beta -84 \alpha  \beta ^2+76 i \beta ^3\right) }{(\alpha +i \beta ) \left(\alpha ^2+\beta ^2\right)^2}\\
&~ {} +\frac{4 \alpha  \beta ^3 \left(\alpha ^2+\beta ^2\right) \left(-63 i \alpha ^4+28 \alpha ^3 \beta -56 i \alpha ^2 \beta ^2+16 \alpha  \beta ^3-8 i \beta ^4\right) }{(\beta -i \alpha )^3 (\beta +i \alpha )^2}\\
&~ {} -\frac{4 \alpha  \beta ^3 \left(\alpha ^2+\beta ^2\right) 
\left(77 i \alpha ^4-12 \alpha ^3 \beta +24 i \alpha ^2 \beta ^2+16 \alpha  \beta ^3-8 i \beta ^4\right) }{(\beta -i \alpha )^3 (\beta +i \alpha )^2}\\
&~ {} +\frac{32 \alpha ^2 \beta ^5 \left(85 i \alpha ^5-27 \alpha ^4 \beta +65 i \alpha ^3 \beta ^2-19 \alpha ^2 \beta ^3-20 i \alpha  \beta ^4+8 \beta ^5\right) }{(\alpha +i \beta )^4 (\beta +i \alpha )^3}= 0,
\end{aligned}
\]
\\
\[
\begin{aligned}
A_{-3}:= &~\frac{768 \alpha ^4 \beta ^5 (\beta -5 i \alpha )}{(\beta -i \alpha )^3 (\beta +i \alpha )^2}
+\frac{8 \alpha ^3 \beta ^3 \left(\alpha ^2+\beta ^2\right) \left(21 i \alpha ^3+14 \alpha ^2 \beta +14 i \alpha  \beta ^2+8 \beta ^3\right) }{(\alpha -i \beta )^3 (\beta -i \alpha )^3}\\
& ~{} -\frac{16 \alpha ^3 \beta ^3 \left(\alpha ^2+4 \beta ^2\right) \left(21 i \alpha ^3+15 \alpha ^2 \beta +8 i \alpha  \beta ^2+4 \beta ^3\right) }{(\alpha -i \beta )^3 (\beta -i \alpha )^3}\\
& ~{} -\frac{8 \alpha ^3 \beta ^3 \left(\alpha ^2+\beta ^2\right) \left(21 i \alpha ^3+34 \alpha ^2 \beta -6 i \alpha  \beta ^2+8 \beta ^3\right) }{(\alpha +i \beta )^3 (\beta +i \alpha )^3}\\
&~{} -\frac{16 i \alpha ^4 \beta ^4 \left(-9 i \alpha ^3+95 \alpha ^2 \beta -76 i \alpha  \beta ^2+4 \beta ^3\right) }{(\alpha +i \beta ) (\beta +i \alpha ) \left(\alpha ^2+\beta ^2\right)^2}\\
&~{} +\frac{32 \alpha ^3 \beta ^5 \left(39 i \alpha ^4+3 \alpha ^3 \beta +50 i \alpha ^2 \beta ^2-12 \alpha  \beta ^3-4 i \beta ^4\right) }{(\alpha +i \beta )^4 (\beta +i \alpha )^3}
=0,\\
&\\
A_{-5}:=& ~{}\frac{768 \alpha ^5 \beta ^5 }{(\alpha -i \beta )^2 (\alpha +i \beta )^3}
-\frac{16 i \alpha ^7 \beta ^4 (-21 \beta +5 i \alpha ) }{(\beta -i \alpha )^2 (\beta +i \alpha ) \left(\alpha ^2+\beta ^2\right)^2}\\
&  ~{} +\frac{32 \alpha ^5 \beta ^5 \left(5 i \alpha ^2+4 \alpha  \beta +20 i \beta ^2\right) }{(\alpha +i \beta )^4 (\beta +i \alpha )^3}
+\frac{8 \alpha ^5 \beta ^3 \left(\alpha ^2+\beta ^2\right) \left(9 \alpha ^2+2 i \alpha  \beta +6 \beta ^2\right) }{(\alpha -i \beta )^3 (\alpha +i \beta )^4}\\
& ~{} -\frac{40 \alpha ^5 \beta ^3 \left(i \alpha ^2+2 \alpha  \beta +2 i \beta ^2\right) \left(\alpha ^2+\beta ^2\right) }{(\alpha +i \beta )^4 (\beta +i \alpha )^3}\\
& ~{} -\frac{16 \alpha ^5 \beta ^3 \left(\alpha ^2+4 \beta ^2\right) \left(7 \alpha ^2+i \alpha  \beta +4 \beta ^2\right) }{(\alpha -i \beta )^3 (\alpha +i \beta )^4}
=0,
\end{aligned}
\]
and
\[
\begin{aligned}
A_{-7}:=&~{} -\frac{32 i \alpha ^7 \beta ^5 }{(\alpha +i \beta )^4 (\beta +i \alpha )^3}
+\frac{16 i \alpha ^8 \beta ^4 }{(\alpha +i \beta ) \left(\alpha ^2+\beta ^2\right)^3}\\
&~{} -\frac{16 \alpha ^7 \beta ^3 \left(\alpha ^2+4 \beta ^2\right) }{(\alpha -i \beta ) (\alpha +i \beta )^2 \left(\alpha ^2+\beta ^2\right)^2}\\
&~  {}  -\frac{2 \alpha ^7 \beta ^3 (\alpha +8 i \beta ) }{\left(\alpha ^2+\beta ^2\right)^3}+\frac{2 \alpha ^7 \beta ^3 (9 \alpha -8 i \beta ) }{\left(\alpha ^2+\beta ^2\right)^3}=0,
\end{aligned}
\]
\noindent
and we conclude.

\section{Proofs of \eqref{Ec_SY}, \eqref{Ec_KM} and \eqref{Ec_P}}\label{Calculos}

This section continues and ends the proof mentioned in Subsection \ref{Calculos0}.

\subsection{Proof of \eqref{Ec_SY}}

We will use, for the sake of simplicity, the following notation for the SY breather solution \eqref{SY}:
\be\label{AppSY}
\begin{aligned}
&B_{SY}=\frac{M}{N}, \quad\text{with}\\
& M:= 2\sqrt{2} \ga_+\ga_- e^{ic_1^2t}(c_1\cosh(c_2x) + c_2e^{i \ga_+\ga_- t} \cosh(c_1x)),\\
& N:=\ga_-^2\cosh( \ga_+ x) + \ga_+^2\cosh(\ga_- x) + 2c_1c_2(e^{i \ga_+\ga_- t}+e^{-i \ga_+\ga_- t}).
\end{aligned}
\ee
\noindent
Now, we rewrite the identity \eqref{Ec_SY} in terms of $M,N$ in the following way
\be\label{AppSYMN}
\eqref{Ec_SY} = \frac1{N^5}\sum_{i=1}^{5}S_i,
\ee
with $S_i$ given explicitly by:
\be\label{AppSYs1}
\begin{aligned}
S_1= &~{} iN\Big(6MN_tN_x^2-2N(N_x(M_tN_x + 2MN_{xt})+N_t(2M_xN_x+MN_{xx}))\\
&~{}-N^3M_{xxt}+N^2(2N_xM_{xt}+2M_xN_{xt}+M_{xx}N_t+M_tN_{xx}+MN_{xxt})\Big),
\end{aligned}
\ee
\be\label{AppSYs2}
S_2=\bar{M}(NM_x-MN_x)^2,
\ee
\be\label{AppSYs3}
S_3=2M\bar{M}\Big(2MN_x^2+N^2M_{xx}-N(2M_xN_x+MN_{xx})\Big),
\ee
\be\label{AppSYs4}
S_4=2M(MN_x-M_xN)(N\bar{M}_x - \bar{M}N_x),
\ee
and
\be\label{AppSYs5}
\begin{aligned}
S_5=&~{} \frac32 M^3\bar{M}^2+ n N^4M \\
&~{} -m N^2\Big(M^2\bar{M} + N(NM_{xx}-2M_xN_x) + M(2N_x^2-NN_{xx})\Big) ,
\end{aligned}
\ee

\medskip
\noindent
where we skipped index \emph{SY} in parameters $m_{SY},~n_{SY}$ for simplicity. Now substituting the explicit functions $M,N$ \eqref{AppSY} in $S_i,~i=1,\dots,5$ and collecting terms, we get after lengthy manipulations that
\be\label{AppSYsum}
\begin{aligned}
&\sum_{i=1}^{5}S_i= \sum_{i=1}^{29}p_is_i,\\
\end{aligned}
\ee
\medskip
\noindent
where, labeling $r_1=\sinh(c_1x)\sinh(c_2x),~s_1=\cosh(c_1x), ~s_2=\cosh(c_2x),$
\be\label{AppSYsum2}
\begin{aligned}
& ~ s_3=s_1r_1,~s_4=s_1^3 ~s_5=s_1^3r_1,~ s_6=s_1^5, ~s_7=s_1r_1s_2^4,~s_8=s_2r_1,\\
& ~s_9=s_1^2s_2, ~s_{10}=r_1s_1^2s_2, ~s_{11}=s_1^4s_2, ~s_{12}=r_1s_1^4s_2, ~s_{13}=s_1s_2^2,\\
&~ s_{14}=r_1s_1s_2^2,~s_{15}=s_1^3s_2^2,~s_{16}=r_1s_1^3s_2^2, ~s_{17}=s_1^5s_2^2, ~s_{18}=s_2^3,\\
& ~s_{19}=r_1s_2^3,~s_{19}=s_1^2s_2^3, ~s_{20}=r_1s_1^2s_2^3,~s_{21}=s_1^4s_2^3, ~s_{22}=r_1s_1^4s_2^3,\\
&~s_{23}=s_1s_2^4, ~s_{24}=s_1^3s_2^4, ~s_{25}=r_1s_1^3s_2^4,~s_{26}=s_1^5s_2^4, ~s_{27}=s_2^5, \\
&~s_{28}=s_1^2s_2^5, ~s_{29}=s_1^4s_2^5, \\
\end{aligned}
\ee
\medskip
\noindent
and
\be\label{APPpi}
p_i=\sum_{j=0}^{L_i}a_{ij}(c_1,c_2,m,n)e^{j\cdot(2i\ga_+\ga_-t)},~~L_i\in\N,
\ee
\noindent
with $a_{ij}$ a polynomial in $c_1,c_2,m,n$. For instance, we have for the first term in \eqref{AppSYsum}, i.e.
\[
 p_1s_1=\Big(\sum_{j=0}^{4}a_{1j}e^{2j\cdot(2i\ga_+\ga_-t)}\Big)s_1,
\]
\noindent
with
\be\label{coefa11}
\begin{aligned}
&a_{10}=-16 c_1^4 c_2^4 (c_1^4 - 2 c_2^4 - c_1^2 m +   2 c_2^2 m - n),\\
& a_{11}=-224 c_1^4 c_2^4  (c_1^4 - 3 c_2^4 - c_1^2 m +    3 c_2^2 m - 2 n),\\
&a_{12}=1120 c_1^4 c_2^4  (c_2^4 - c_2^2 m + n),\\
& a_{13}=224 c_1^4 c_2^4  (c_1^4 + c_2^4 - c_1^2 m -    c_2^2 m + 2 n),\\
&a_{14}=16 c_1^4 c_2^4  (c_1^4 - c_1^2 m + n).
\end{aligned}
\ee
\medskip
\noindent
Then, imposing e.g. $a_{14}=0$, and substituting $n=c_1^2m-c_1^4$ into $a_{10}$, we get that
\[
 a_{10}=2(c_1^2-c_2^2)(c_1^2+c_2^2-m),
\]
therefore, $a_{10}=0$, if $m=c_1^2+c_2^2$ and then $n=c_1^2c_2^2$. In fact, substituting $n=c_1^2m-c_1^4$ into the
coefficients $a_{11},~a_{12},~a_{13}$, we get that
all them are proportional to the factor $(c_1^2+c_2^2-m)$, namely
\be\label{coefa11a}
\begin{aligned}
&a_{11}=3(c_1^2-c_2^2)(c_1^2+c_2^2-m),\\
&a_{12}= (c_1^2 - c_2^2)(-(c_1^2 + c_2^2) + m) ,\\                                                                                                                                              
&a_{13}=(c_1^2 - c_2^2)(-(c_1^2 + c_2^2) + m),
\end{aligned}
\ee
\medskip
\noindent
and then  when $m=c_1^2+c_2^2$ we get  $a_{11}=a_{12}=a_{13}=0$, and $p_1=0$. Now, selecting $n=c_1^2c_2^2$ and analyzing the rest of polynomials 
$p_i,~i=2,\dots29$, in \eqref{AppSYsum}-\eqref{APPpi}, it is easy to see  that all coefficients $a_{ij},~i=2,\dots29$  are proportional 
to the factor $(c_1^2+c_2^2-m)$, i.e.
\[
 a_{ij}=b_{ij}(c_1,c_2)\cdot(c_1^2+c_2^2-m),
\]
\noindent
with $b_{ij}$ a polynomial in $c_1,c_2$. Therefore, selecting $m=c_1^2+c_2^2$, we get $a_{ij}=0,~\forall i=2,\dots,29,\forall j=0,\dots, L_i$ and we conclude.

\subsection{Proof of \eqref{Ec_KM}}
The proof is similar to the one for \eqref{Ec_SY}. Let us use the following notation for the KM breather solution \eqref{KM}:
\be\label{AppKM}
\begin{aligned}
&B_{KM}=e^{it}\left(1-\frac{M}{N}\right), \quad\text{with}\\
& M:=\sqrt{2} \beta  \left(\beta ^2 \cos \left(\alpha  t\right)+i \alpha  \sin \left(\alpha  t\right)\right),\\
& N:=\alpha  \cosh ( \beta  x)-\sqrt{2} \beta  \cos \left(\alpha  t\right).
\end{aligned}
\ee
Now, we rewrite the identity \eqref{Ec_KM} in terms of $M,N$ in the following way
\be\label{AppKMMN}
\eqref{Ec_KM} = \frac{e^{it}}{N^5}\sum_{i=1}^{6}S_i,
\ee
\noindent
with $S_i$ given explicitly by:
\be\label{AppKMs1}
\begin{aligned}
S_1:= & ~{} -N\Big(6iMN_tN_x^2-2iN(N_x(M_tN_x+M(iN_x+2N_{xt})) + N_t(2M_xN_x+MN_{xx}))\\
& ~ {}  +N^3(M_{xx}-iM_{xxt})+N^2(-2M_x(N_x-iN_{xt}) \\
& ~ {} +i(2N_xM_{xt}+N_tM_{xx}+iMN_{xx}+M_tN_{xx}+MN_{xxt}))\Big),
\end{aligned}
\ee
\be\label{AppKMs2}
\begin{aligned}
&S_2:=-(\bar{M}-N)(NM_x-MN_x)^2,
\end{aligned}
\ee
\be\label{AppKMs3}
\begin{aligned}
&S_3:=2(\bar{M}-N)(M-N)\Big(-2MN_x^2-N^2M_{xx}+N(2M_xN_x+MN_{xx})\Big),
\end{aligned}
\ee
\be\label{AppKMs4}
\begin{aligned}
&S_4:=-2(M-N)(-NM_x+MN_x)(N\bar{M}_x-\bar{M}N_x),
\end{aligned}
\ee
\be\label{AppKMs5}
\begin{aligned}
&S_5:=-\frac32(M-N)(\bar{M}N+M(-\bar{M}+N))^2,
\end{aligned}
\ee
and
\be\label{AppKMs6}
\begin{aligned}
S_6:=&~{} N^2\Big(\bt^2M^2(\bar{M}-N) + N(\bt^2\bar{M}N-(3+\bt^2)(2M_xN_x-NM_{xx}))\\
&~{}+ M(-2\bt^2\bar{M}N+\bt^2N^2+2(3+\bt^2)N_x^2-(3+\bt^2)NN_{xx})\Big).
\end{aligned}
\ee
Now substituting the explicit functions $M,N$ \eqref{AppKM} in $S_i,~i=1,\dots,6$ and collecting terms, we get 
\[
\begin{aligned}
\sum_{i=1}^{6}S_i =&~{}  a_1\cosh^2(x\beta)\cos(t \alpha) +  a_2\cosh^4 (\bt x)\cos(\al t)  + a_3\cosh(\bt x)\cos^2(\al t)\\
&~{} + a_4 \cosh^3 (\bt x)\cos^2(\al t)  + a_5\cos^2 (\al t)\sin(\al t)   +  a_6 \cos^3(\al t) \\
&~{}+ a_7\cosh^2(\bt x)\cos^3 (\al t) + a_{8}\cosh(\bt x)\cos^3(\al t)\sin(\al t) \\
&~{}  + a_9\cosh(\bt x)\cos^4(\al t) + a_{10}\cos(\al t)^4\sin(\al t) +  a_{11} \cos^5 (\al t) ,                                                                                                                                                                                                                                                                                                                                                                                                            
\end{aligned}
\]
with coefficients $a_i,~i=1,\dots,11$ given as follows
\[
\begin{aligned}
&a_1:=4 \sqrt{2} \alpha ^4 \beta ^3 \left(\alpha ^2-\beta ^2 \left(\beta ^2+2\right)\right), \\
& a_2:=-\frac{1}{2}a_1,\quad a_3:=-\frac{7\bt}{\sqrt{2}\al}a_1, \quad a_4:=\frac{\bt}{\sqrt{2}\al}a_1,\\
&a_5:=3i\frac{\bt^2}{\al^2}a_1,\quad a_6:=\frac{\bt^2}{\al^2}(3\bt^2+5)a_1,\\
& a_7:=3\frac{\bt^2}{\al^2}a_1, \quad a_{8}:=-4i\frac{\bt^2}{\al^2}a_1,\\
&a_9:=4 \alpha  \beta ^4 \left(3 \alpha ^4-2 \alpha ^2 \beta ^2 \left(5 \beta ^2+8\right)+\beta ^4 \left(7 \beta ^4+24 \beta ^2+20\right)\right),\\
&a_{10}:=-4 i \sqrt{2} \alpha  \beta ^5 \left(3 \alpha ^4-2 \alpha ^2 \beta ^2 \left(3 \beta ^2+5\right)+\beta ^4 \left(3 \beta ^4+10 \beta ^2+8\right)\right),\\
& a_{11}:=-\frac{i}{\al}(\beta^2+1)a_{10}.
\end{aligned}
\]
Finally, using that $\beta =\sqrt{2 (2 a -1)}$ and $\alpha =\sqrt{8a  (2 a -1)}$, we have that all $a_i$ vanish, and we conclude.

\subsection{Proof of \eqref{Ec_P}}
This identity follows in the same way that the proof of identity \eqref{Ec_KM} above. We include it for the sake of completeness, but it can be formally obtained by a standard limiting procedure.

\medskip

Let us use the following notation for the Peregrine breather solution \eqref{P}:
\[\label{AppP}
\begin{aligned}
&B_P=e^{it}\left(1-\frac{M}{N}\right), \quad\text{with}\\
&M:=4(1+2it),\quad N:=1 + 4t^2 + 2x^2.
\end{aligned}
\]
\noindent
Now, we rewrite the identity \eqref{Ec_P} in terms of $M,N$ in the following way
\[\label{AppPMN}
\eqref{Ec_P} = \frac{e^{it}}{N^5}\sum_{i=1}^{6}S_i,
\]
\noindent
with $S_i$ given explicitly by:
\[\label{AppPs12345}
\begin{aligned}
 S_1=&~{} \eqref{AppKMs1}\\
 =&~{} 16(1 + 4t^2 + 2x^2)(3 - 80 t^4 + 32 i t^5 - 12 x^2 - 36 x^4\\
 &~ {} - 16 i t^3 (-5 + 2 x^2) +   8 t^2 (-1 + 34 x^2) - 6 i t (-3 + 28 x^2 + 4 x^4)),
\end{aligned}
\]
\[\label{AppPs2}
\begin{aligned}
S_2=&~{}\eqref{AppKMs2}=-256 (i - 2 t)^2 x^2 (-3 + 8 i t + 4 t^2 + 2 x^2),
\end{aligned}
\]
\[\label{AppPs3}
\begin{aligned}
S_3=&~{} \eqref{AppKMs3}\\
=&~{} 32(1 + 2 i t) (1 + 4 t^2 - 6 x^2) (-3 - 8 i t + 4 t^2 + 2 x^2) (-3 +  8 i t + 4 t^2 + 2 x^2),
\end{aligned}
\]
\[\label{AppPs4}
\begin{aligned}
S_4=&~{} \eqref{AppKMs4}=-512 (2 t-i) (2 t+i) x^2 \left(4 t^2-8 i t+2 x^2-3\right),
\end{aligned}
\]
\[\label{AppPs5}
\begin{aligned}
S_5=&~{}\eqref{AppKMs5}=96 \left(1 + 4 t^2-2 x^2\right)^2 \left(4 t^2-8 i t+2 x^2-3\right),
\end{aligned}
\]
and
\[\label{AppPs6}
\begin{aligned}
&S_6=-48 i  (2 t-i)(1 + 4t^2 + 2x^2)^2 \left(4 t^2-6 x^2+1\right).
\end{aligned}
\]
Now collecting terms, it is easy to see that we get a polynomial
\[
 \sum_{i=1}^{6}S_i= b_0 + b_2 x^2 + b_4 x^4 + b_6 x^6,
\]
where we have that $b_0=b_2=b_4=b_6=0$.


\medskip

\section{Real and imaginary parts \eqref{reimQu1} and \eqref{reimQu2}}\label{Calculos00}

First of all, we prove \eqref{reimQu1}. Having in mind \eqref{SolQ}, we explicitly compute $Q_\eta''\overline{Q_\eta'}$ and $Q_\eta'' \overline{Q_\eta}$ respectively, i.e.
\be\label{reQ}\begin{aligned}
&Q_\eta'' \overline{Q_\eta'}=-\frac{4 e^{2 x}}{\left(|\eta|^2+2 e^{2 x}+e^{4 x}\right)^5} 
\left(-\bar{\eta} |\eta|^2+\bar{\eta} e^{2 x} \left(-3 \eta+3 e^{2 x}+2\right)
+e^{4 x} \left(e^{2 x}-2\right)\right)\times D,
\end{aligned}\ee
and
\be\label{imaQ}
Q_\eta'' \overline{Q_\eta}=\frac{4 e^{2 x}}{\left(|\eta|^2+2 e^{2 x}+e^{4 x}\right)^4} 
\left(\bar{\eta}+e^{2 x}\right)\times D,
\ee
where
\[
D:=|\eta|^4\eta +3(3 \bar{\eta}-4)|\eta|^2\eta e^{2 x}-2 \eta e^{4 x} (\bar{\eta} (11 \eta-2)-2)+e^{6 x} ((4-22\bar{\eta}) \eta+4)
+3 (3 \eta-4) e^{8 x}+e^{10 x}.\]
Now, setting $r=e^{2x},$ substituting $\eta=a+ib,$  remembering that from \eqref{polU4} 
\[
r^4 = (a^2 + b^2)^2+ \left(4 a^3-2 a^2+4 a b^2-2 b^2\right)r+(2-4 a) r^3, 
\]
and taking the real and imaginary parts to \eqref{reQ} and \eqref{imaQ} respectively, we get
\be\label{reQ2}\begin{aligned}
\re\{ Q_\eta'' \overline{Q_\eta'} \}=&256 \left(2 a^2-3 a+1\right) b^2 \left(a^2+b^2\right)^2\\
&+128 (a-1) b^2 [15 a^4-16 a^3+2 a^2 \left(7 b^2+2\right)-16 a b^2-b^2 \left(b^2-4\right)]u\\
&-256 \left(2 a^2-3 a+1\right) b^2\left(a^2+b^2\right)u^2-128 (a-1) b^2\left(15 a^2-16 a-b^2+4\right)u^3,
\end{aligned}\ee
and
\be\label{imaQ2}\begin{aligned}
\ima \{Q_\eta'' \overline{Q_\eta}\}=& -32 (a-1) b \left(a^2+b^2\right)^2
+(-64 (1 - 3 a + 2 a^2) b (a^2 + b^2))u\\
&+ (32 (a-1) b \left(a^2+b^2\right))u^2.
\end{aligned}
\ee
\medskip
Now, evaluating at $x=\frac12\log u_{1,+},$ with $u_{1,+}=\frac{\alpha }{\sqrt{\alpha ^2+\beta ^2}},$ we rewrite \eqref{reQ2} and \eqref{imaQ2} as follows:
\be\label{reQ3}\begin{aligned}
\re\{ Q_\eta'' \overline{Q_\eta'} \}\Big|_{x=\frac12\log u_{1,+}}=&
\frac{128 (a-1) b^2}{\left(\alpha ^2+\beta ^2\right)^{3/2}} \left(\alpha ^2 \left(a^2+b^2-1\right)+\beta ^2 \left(a^2+b^2\right)\right)\\
&\times\left(2 (2 a-1) \left(a^2+b^2\right)\sqrt{\alpha ^2+\beta ^2}+\alpha  \left(15 a^2-16 a-b^2+4\right)\right),
\end{aligned}\ee
and
\be\label{imaQ3}\begin{aligned}
\ima \{Q_\eta'' \overline{Q_\eta}\}\Big|_{x=\frac12\log u_{1,+}}=&-\frac{32 (a-1) b}{\left(\alpha ^2+\beta ^2\right)^{3/2}}\left(\alpha ^2 \left(a^2+b^2-1\right)+\beta ^2 \left(a^2+b^2\right)\right)\\
&\times\left(\left(a^2+b^2\right) \sqrt{\alpha ^2+\beta ^2}+(4 a-2) \alpha \right).
\end{aligned}
\ee
Finally, substituting $a=\frac{\alpha ^2}{\alpha ^2+\beta ^2},~b = -\frac{\alpha  \beta }{\alpha ^2+\beta ^2},$ we easily get 
\[
 \re\{ Q_\eta'' \overline{Q_\eta'} \}\Big|_{x=\frac12\log u_{1,+}} =\ima \{Q_\eta'' \overline{Q_\eta}\}\Big|_{x=\frac12\log u_{1,+}}=0,
\]
since $\alpha ^2 \left(a^2+b^2-1\right)+\beta ^2 \left(a^2+b^2\right)=0$.

\medskip

Now, we proceed to prove \eqref{reimQu2}. Starting from \eqref{reQ2} and \eqref{imaQ2} respectively, and 
evaluating at $x=\frac12\log u_{2,\pm},$ with $u_{2,\pm}=\frac{-\alpha ^2+\beta ^2\pm\beta \sqrt{\beta ^2-3 \alpha ^2}}{\alpha ^2+\beta ^2},$ we get
\be\label{reimQu2App}\begin{aligned}
&\re\{ Q_\eta'' \overline{Q_\eta'} \} \left( \frac12\log u_{2,\pm}\right)   + \al  \ima \{Q_\eta'' \overline{Q_\eta}\} \left( \frac12\log u_{2,\pm}\right)\\
=& -\frac{4 (a-1) b ((a-1) \alpha +b)}{\left(\alpha ^2+\beta ^2\right)^3} 
\Bigg\{\alpha ^6 [15 a^6-72 a^5+a^4 \left(29 b^2+81\right)+a^3 \left(24-80 b^2\right)\\
&+a^2 \left(13 b^4+82 b^2-88\right)-8 a \left(b^4+5 b^2-6\right)-b^6+b^4+8 b^2-8]\\
&+\alpha ^4 \beta ^2 [45 a^6-104 a^5+a^4 \left(87 b^2+164\right)-8 a^3 \left(18 b^2+98\right)\\
&+3 a^2 \left(13 b^4+56 b^2+376\right)-8 a \left(5 b^4+2 b^2+72\right)-3 b^6+4 b^4-24 b^2+96]\\
&+\alpha ^2 \beta ^4 [45 a^6+8 a^5+a^4 \left(87 b^2-35\right)-8 a^3 \left(6 b^2-105\right)\\
&+a^2 \left(39 b^4-22 b^2-1376\right)-8 a \left(7 b^4+15 b^2-90\right)-3 b^6+13 b^4+64 b^2-120]\\
&+\beta ^6 [15 a^6+40 a^5+a^4 \left(29 b^2-118\right)+8 a^3 \left(2 b^2-18\right)\\
&+a^2 \left(13 b^4-108 b^2+352\right)-8 a \left(3 b^4-14 b^2+24\right)-b^6+10 b^4-32 b^2+32]\}\\
&\mp\frac{8 (a-1) b \beta \sqrt{\beta ^2-3 \alpha ^2} ((a-1) \alpha +b)}{\left(\alpha ^2+\beta ^2\right)^3}
\{2 (2 a-1) \alpha ^2 \beta ^2 \left(7 a^2-8 a-b^2+2\right) \left(2 a^2+2 b^2+9\right)\\
&+\alpha ^4 [28 a^5-31 a^4+4 a^3 \left(6 b^2-19\right)-6 a^2 \left(5 b^2-23\right)\\
&-4 a \left(b^4-5 b^2+18\right)+b^4-6 b^2+12]+\beta ^4[28 a^5-61 a^4+4 a^3 \left(6 b^2-18\right)\\
&+a^2 \left(176-58 b^2\right)-4 a \left(b^4-14 b^2+24\right)+3 b^4-16 b^2+16]
\Bigg\}.
\end{aligned}
\ee
Finally, substituting $a=\frac{\alpha ^2}{\alpha ^2+\beta ^2},~b = -\frac{\alpha  \beta }{\alpha ^2+\beta ^2},$ we get 
\be\label{reimQu2App2}\begin{aligned}
\re\{ Q_\eta'' \overline{Q_\eta'} \} & \left( \frac12\log u_{2,\pm}\right)   + \al  \ima \{Q_\eta'' \overline{Q_\eta}\} \left( \frac12\log u_{2,\pm}\right)\\
=& ~{} \frac{2 \alpha ^2 \beta ^5 (\beta +1) \sqrt{\beta ^2-3 \alpha ^2}}{\left(\alpha ^2+\beta ^2\right)^9} \\
&\times \Big(4 \beta  \sqrt{\beta ^2-3 \alpha ^2} \left(5 \alpha ^8-50 \alpha ^6 \beta ^2+109 \alpha ^4 \beta ^4-76 \alpha ^2 \beta ^6+16 \beta ^8\right)\\
& \qquad \mp\left(\alpha ^2-\beta ^2\right) \left(\alpha ^2+\beta ^2\right)^3 [\alpha ^8-34 \alpha ^6 \beta ^2+121 \alpha ^4 \beta ^4-84 \alpha ^2 \beta ^6
+16 \beta ^8]\Big).
\end{aligned}
\ee
Note how the RHS term is zero at $\beta = \sqrt{3} \alpha$, but we are in the case $\beta > \sqrt{3} \alpha$, the equality being treated below. Now, without loss of generality we take $\alpha=1$  and we set $t=\beta^2$ to rewrite \eqref{reimQu2App2} as follows:
\be\label{reimQu2App3}\begin{aligned}
&\re\{ Q_\eta'' \overline{Q_\eta'} \} \left( \frac12\log u_{2,\pm}\right)   + \al  \ima \{Q_\eta'' \overline{Q_\eta}\} \left( \frac12\log u_{2,\pm}\right)
=\frac{2t^{5/2} (\sqrt{t} +1) \sqrt{t-3}}{\left(1+t\right)^9}P(t),\\
&P(t):=P_1(t) + P_2(t),\\
& P_1(t) :=4 \sqrt{t-3} \sqrt{t} \left(16 t^4-76 t^3+109 t^2-50 t+5\right),\\
&P_{2,\mp}(t):=\mp(1-t) (t+1)^3 \left(16 t^4-84 t^3+121 t^2-34 t+1\right).
\end{aligned}
\ee
Finally plotting $P_1,P_{2,\mp}$, we clearly see that 
$\re\{ Q_\eta'' \overline{Q_\eta'} \} \left( \frac12\log u_{2,\pm}\right)   + \al  \ima \{Q_\eta'' \overline{Q_\eta}\} \left( \frac12\log u_{2,\pm}\right)\neq0$, see Fig. \ref{SY_fig} for details.  
\begin{figure}[h!] 
              \includegraphics[width=9.0cm,height=6.5cm]{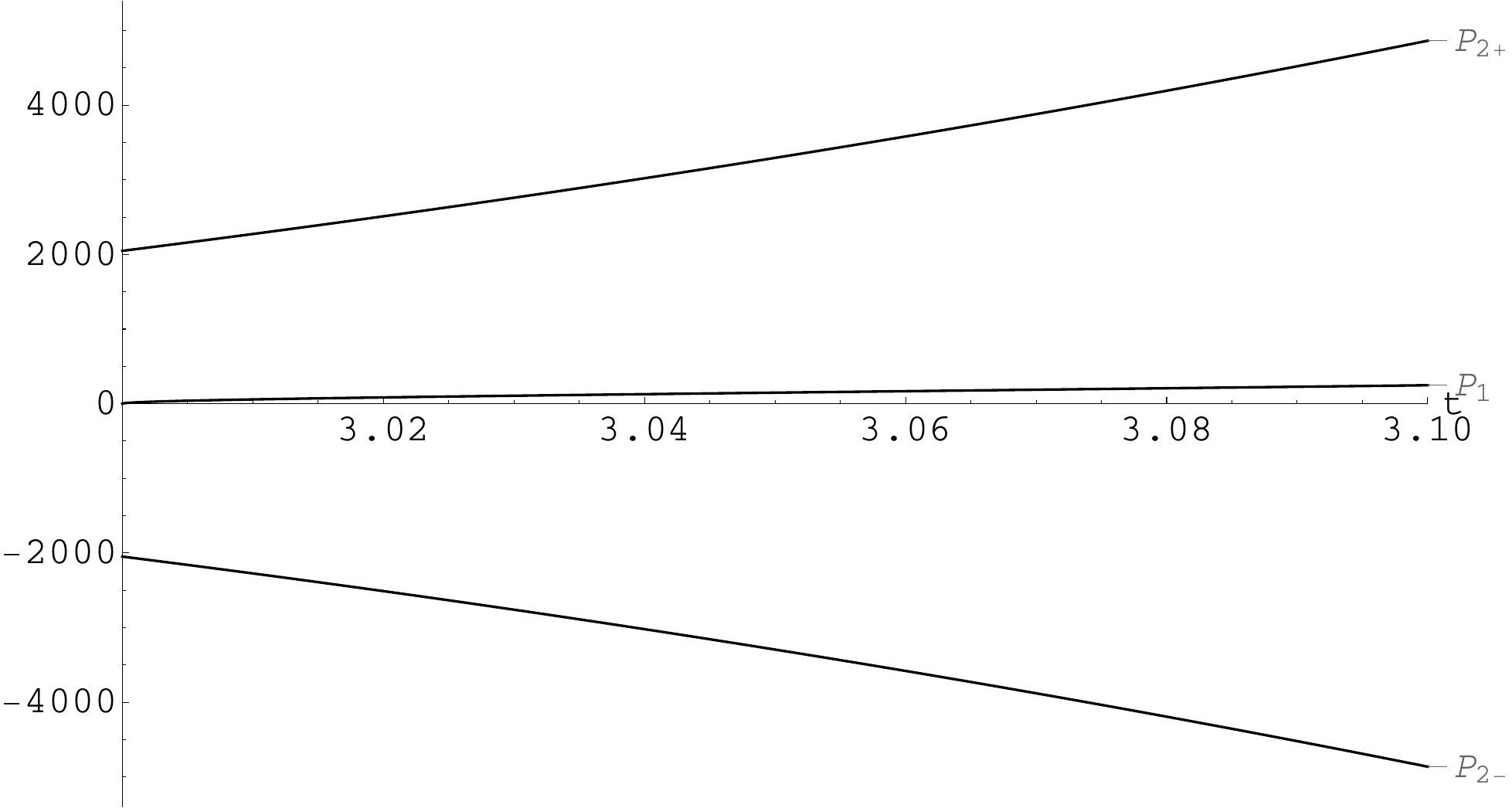}

              \caption{Graphics of $P_1$ and $P_{2,\mp}$.}\label{SY_fig}
  \end{figure}

\section{General 2-soliton solution for NLS}\label{2solitonNLS}

Recall that $c_1,c_2>0$. Let $\al_1,\al_2\in \R$ and the position and shift variables
\be\label{Shifts_Fases}
\begin{aligned}
y_1 : =c_1(x+2\alpha_1 t), \quad &\quad   y_2:=c_2(x+2\alpha_2 t),\\
\Theta_1:= \al_1x+(\al_1^2-c_1^2)t,  \quad &\quad  \Theta_2:= \al_2 x+(\al_2^2-c_2^2)t.
\end{aligned}
\ee
The general SY 2-soliton solution is given by the expression (taken after \cite{Akhmediev1} and some lengthy simplifications)
\be\label{SYa1a2}
B_{SY,gen}(t,x):=Q_{SY}(t,x)   + \frac{2\sqrt{2} c_2  s\bar{r}}{|r|^2+|s|^2},
\ee
where (compare with \eqref{soliton})
\be\label{solitonQ1}
Q_{SY}(t,x):=-\sqrt{2} c_1e^{-i \Theta_1} \sech y_1,
\ee
and (see \eqref{Shifts_Fases})
\be\label{matrixelements1}
\begin{aligned}
 r:= &~{}e^{-\frac12 (y_2-i \Theta_2) }\Big(ic_1e^{y_2 + i(\Theta_1-\Theta_2)} + (\al_2 -\al_1+ic_2)\cosh(y_1)+  ic_1 \sinh(y_1)\Big) ,\\
 s:= &~{}e^{\frac12 (y_2-i \Theta_2)}\Big(ic_1e^{-(y_2 + i(\Theta_1-\Theta_2))} +  (\al_2 -\al_1+ic_2)\cosh(y_1) - ic_1 \sinh(y_1) \Big).
\end{aligned}
\ee
By using the time and space invariance under shifts $t_0,x_0\in\R$ of the equation, a more general 2-soliton $B_{SY,gen}(t+t_0,x+x_0)$ solution can be constructed. Note that \eqref{SYa1a2} reduces to \eqref{SY} when the \emph{speed} parameters $\alpha_1,\alpha_2$ tend to zero: for each $(t,x)\in \R^2$,
\be\label{Limit_SY}
\lim_{\al_1,\al_2\to 0}B_{SY,gen}(t,x) = B_{SY}(t,x).
\ee
It is not difficult to check that $B_{SY,gen}$ satisfies a nonlinear ODE which converges to the one \eqref{Ec_SY} satisfied by $B_{SY}$ as $\al_1,\al_2\to 0$. Namely, the ODE satisfied is  (here $B_{SY,gen}\equiv B$)
\be\label{Ec_SY_alfaneq0}
\begin{aligned}
 & ~B_{(4x)} + 3B_x^2 \bar B + 4 |B|^2B_{xx} + 2|B_x|^2 B + B^2 \bar B_{xx}   + \frac32 |B|^4B \\
&\qquad  \qquad  - m_{SY,\al_1,\al_2}(B_{xx} +  |B|^2 B) +  n_{SY,\al_1,\al_2} B +i p_{SY,\al_1,\al_2}B_x\\
&\qquad  \qquad +i l_{SY,\al_1,\al_2}(B_{xxx} + 3|B|^2B_{x})=0,
\end{aligned}
\ee
(compare with \eqref{Ec_SY}) with coefficients 
\be\label{coeffa1a2}
\begin{aligned}
&m_{SY,\al_1,\al_2} :=c_2^2+c_1^2 + (\al_1+\al_2)^2 + 2\al_1\al_2,\quad n_{SY,\al_1,\al_2}:=(c_1^2+\al_1^2)(c_2^2+\al_2^2),\\
&\quad p_{SY,\al_1,\al_2}:=-2(c_2^2\al_1 + c_1^2\al_2 + \al_1\al_2(\al_1+\al_2)), \quad l_{SY,\al_1,\al_2}:=2(\al_1 + \al_2).
\end{aligned}
\ee
 This ODE is obtained by performing a variational characterization of $B_{SY,\al_1,\al_2}$ using the modified functional
\[
\begin{aligned}
\mathcal H_{X,\al_1,\al_2}[u] := &~{} F_{SY}[u] + m_{SY,\al_1,\al_2} E_{SY}[u] + n_{SY,\al_1,\al_2} M_{SY}[u] \\
&~{} + ip_{SY,\al_1,\al_2} P_{SY}[u] + il_{SY,\al_1,\al_2} L_{SY}[u],
\end{aligned}
\]
where $M_{SY},E_{SY},F_{SY}$ and $P_{SY}$ are given in \eqref{Mass_NLS0}, \eqref{Energy_NLS0}, \eqref{F_NLS0} and \eqref{Momentum_0}, respectively. The conserved
quantity  $L_{SY}$ (a second momentum) is defined as follows
\[
L_{SY}[u]:=\ima\frac{1}{2}\int_\R\Big(u_{xxx}\bar{u} - \frac{1}{2}u|u|^2\bar{u}_x + \bar{u}|u|^2u_x\Big).
\]
The proof of \eqref{Ec_SY_alfaneq0} is very much in the spirit of Theorem \ref{TH1a}. Details will be given elsewhere, but the interested reader may check \eqref{Ec_SY_alfaneq0} either by lengthy computations, or using a standard symbolic computing software.

\bigskip
\bigskip
\bigskip

\end{document}